\def\Bbb{\mathbb}
\def\Cal{\mathcal}
\def\eb{\varepsilon}
\def\R {\mathbb{R}}
\def\<{\left<}
\def\>{\right>}
\def\({\left(}
\def\){\right)}
\def\R{\Bbb R}
\def\eb{\varepsilon}
\def\csname r@tocindent4\endcsname{0pt}
\newtheorem{proposition}{Proposition}[section]
\newtheorem{theorem}[proposition]{Theorem}
\newtheorem{corollary}[proposition]{Corollary}
\newtheorem{lemma}[proposition]{Lemma}
\theoremstyle{definition}
\newtheorem{remark}[proposition]{Remark}
\numberwithin{equation}{section}
\def \no#1#2#3 {{\bf #1} (#3), #2.}
\def \eds#1#2#3 {#1, #2, #3.}
\begin{document}
\title[Green's function asymptotics and interpolation inequalities  ] {Green's function asymptotics and sharp point-wise interpolation inequalities}

\author[A.A.Ilyin, S.V.Zelik] {Alexei A. Ilyin and Sergey V. Zelik}

\subjclass[2000]{26D10, 46E35, 52A40}
\keywords{Sobolev inequality, interpolation inequalities,
 Green's function, sharp constants, Carlson inequality.}
\thanks{}
\address{\noindent Keldysh Institute of Applied Mathematics,
Miusskaya sq. 4, 125047 Moscow, Russia and \newline
Institute for Information Transmission Problems,  Bolshoy Karetny 19, 127994 Moscow, Russia;\newline
University of Surrey, Department of Mathematics,
Guildford, GU2 7XH, United Kingdom.}
\email{ilyin@keldysh.ru; s.zelik@surrey.ac.uk}

\medskip

\begin{abstract}
  We propose a general method for  finding sharp constants
in the imbeddings of the  Sobolev spaces  $H^m(\mathcal{M})$,
defined on a $n$-dimensional Riemann  manifold  $\mathcal{M}$ into
the space of bounded continuous functions, where $m>n/2$. The
method is based on the analysis of the asymptotics with respect to
the spectral parameter of the Green's function of the elliptic
operator of order 2m, the domain of the square root of which
defines the norm of the corresponding  Sobolev space. The cases of
the $n$-dimensional torus $\mathbb{T}^n$ and $n$-dimensional sphere
$\mathbb{S}^n$ are treated in detail, as well as some  manifolds
with boundary.  In certain cases when $\mathcal{M}$ is compact,
multiplicative inequalities with remainder terms of various types
are obtained. Inequalities with correction term for periodic functions
imply an improvement for the well-known Carlson inequalities.

\medskip\medskip\medskip\medskip\medskip

\textit{\large This paper is dedicated to the memory of Professor M.I.\,Vishik}

\end{abstract}

 \maketitle
\tableofcontents
\section{Introduction}\label{s0}

In this paper we propose a general method for finding sharp constants
in multiplicative inequalities of Sobolev--Gagliardo-Nirenberg type
characterizing the imbedding of the Hilbert Sobolev space $H^m(\mathcal{M})$
into the space of bounded continuous functions $C(\mathcal{M})$.  Here
$\mathcal{M}$ is an $n$-dimensional manifold and $m>n/2$. The inequalities
are as follows:
\begin{equation}\label{Int1}
\|u\|_{L_\infty(\mathcal{M})}^2\le
K\|u\|_{H^l(\mathcal{M})}^{2\theta}\|u\|_{H^m(\mathcal{M})}^{2(1-\theta)},
\end{equation}
where $-\infty<l<n/2<m<\infty$, so that $0<\theta=\frac{2m-n}{2(m-l)}<1$.

Of course, multiplicative inequalities are known to hold in a much more
general $L_p$ setting (see, for instance, \cite{Besov},\cite{Tri}), when the three norms in~\eqref{Int1} are the
$L_p$, $L_q$ and $L_r$-norms, $1\le p,q,r\le\infty$, and $\theta$ is
accordingly defined by  scale invariance; in this sense
we shall be dealing only with the $L_\infty-L_2-L_2$ case in this paper.
In the one-dimensional case, when $\mathcal{M}=\mathbb{R}$, $\mathbb{R}_+$,
or $(0,L)$ the corresponding interpolation inequalities are called
the inequalities for derivatives. There exists a vast literature devoted
to them, see, for instance, \cite{Ar},\cite{M-T} and the references therein.
On the whole line, the general $L_p-L_q-L_r$ case of a function and
its first-order derivative was completely settled in \cite{Nagy}.

The sharp constant in inequality~\eqref{Int1} on $\mathbb{R}$ was
found in~\cite{Taikov}, the more complicated case of the half-line
 $\mathbb{R}_+$ was solved in \cite{Gab}, the value of the constant
 in closed form was obtained in~\cite{Kal}.

The sharp constant in inequality~\eqref{Int1} for periodic functions with zero mean
was found in~\cite{I98JLMS}. In particular, for $l=0$
it was shown there  that
$$
\|u\|_\infty^2\le K(m)\|u\|^{2\theta}\|u^{(m)}\|^{2(1-\theta)},
\qquad u\in H^m_{\mathrm{per}}(0,2\pi),\ \int_0^{2\pi} u(x)dx=0,
$$
where $m>1/2$, $\theta=1-1/(2m)$ and
$$
K(m)=
\left(2m\theta^\theta(1-\theta)^{1-\theta}\sin\pi\theta\right)^{-1}.
$$
The constant (which is, in fact, the same as in the case
of the whole line) is sharp and no extremal functions exist.
We also observe that the first order inequality, namely,
\begin{equation}\label{Hardy-Carlson}
\|u\|_\infty^2\le \|u\|\|u'\|,
\qquad u\in H^1_{\mathrm{per}}(0,2\pi),\ \int_0^{2\pi} u(x)dx=0,
\end{equation}
was, in fact, proved much earlier in~\cite{Hardy}, as
a proof of the Carlson inequality. (In the end of this section we
discuss the connection of our results with the Carlson inequality in greater
detail.)

Sharp constants in inequalities~\eqref{Int1} on the sphere $\mathbb{S}^n$
were found in~\cite{I99JLMS}.

A comprehensive analysis of sharp constants in~\eqref{Int1} and in logarithmic
Brezis--Gallouet inequalities on the $n$-dimensional torus
$\mathbb{T}^n$ has been done in~\cite{Zelik}, where inequalities with correction terms were obtained for the first time. For example, the following inequalities hold
\begin{equation}\label{intro_tor.1.1}
\aligned
&\|u\|_{\infty}^2\le\|u\|\|u'\|-
\frac1{\pi}\|u\|^2,\quad &u\in H^1_{\mathrm{per}}(0,2\pi),\ \int_0^{2\pi} u(x)dx=0,\\
&\|u\|_{\infty}^2\le \frac{\sqrt2}{\sqrt[4]{27}}\|u\|^{3/2}\|u''\|^{1/2}-
\frac2{3\pi}\|u\|^2,\quad &u\in H^2_{\mathrm{per}}(0,2\pi),\ \int_0^{2\pi} u(x)dx=0,
\endaligned
\end{equation}
where all constants are sharp and no extremal functions exist. %\NEW
These inequalities have been verified in \cite{Zelik} with an
essential help of numerics. Purely analytic proof of them is given
in the present paper as one of the applications, see Theorem
\ref{Th.tor.1.1}.  %\END

In the first part of this  paper we describe  a general method for finding sharp
constants in inequalities of the type~\eqref{Int1} including the inequalities
with correction terms. %\NEW
Namely, %\END
let $A$ and $B$ be  self-adjoint non-negative
elliptic differential operators on $\mathcal{M}$ of order $2m$ and
$2l$, respectively. (To describe the main ideas we may assume for a
moment in this introduction that $l=0$ and $B=Id$.) By the elliptic
regularity the kernel of $A$ is finite dimensional and consists of
smooth (orthonormal) functions:
$$
\ker A=\mathrm{span}\{\varphi_1,\dots,\varphi_k\}.
$$
We set
$\bar H^m:=H^m(\mathcal{M})\cap\ker A^\bot$ and
$\bar H^l:=H^l(\mathcal{M})\cap\ker A^\bot$. Then
the square roots of $A$ and $B$ define the equivalent norms on $\bar H^m(\mathcal{M})$ and
$\bar H^l(\mathcal{M})$:
$$
\|u\|_A^2:=(Au,u)\sim\|u\|_{H^m(\mathcal{M})}^2,\quad
\|u\|_B^2:=(Bu,u)\sim\|u\|_{H^l(\mathcal{M})}^2
$$
for $u\in \bar H^m$ and $u\in \bar H^l$, respectively.
\par
%\NEW
We fix an arbitrary point $\xi\in\mathcal{M}$ and consider the following analog of inequality \eqref{Int1}:
%$$
\begin{equation}\label{Int-our}
|u(\xi)|^2\le K\|u\|_B^{2\theta}\|u\|_A^{2(1-\theta)}.
\end{equation}
%$$
In particular, we are interested in the sharp constant $K=K(\xi)$
in this inequality. To study this problem, we introduce the
following maximization problem: for every  %\END
 number $D\ge\lambda_0$, where $\lambda_0$ is the first eigenvalue of $B^{-1/2}AB^{-1/2}$ in
$\bar H:=L_2(\mathcal{M})\cap \ker A^\bot$, find
$\mathbb V(\xi,D)$, solving
%$$
\begin{equation}\label{Int2}
\mathbb V(\xi,D):=\sup\bigg\{|u(\xi)|^2:\ \ u\in\bar H^m,\ \ \|u\|_B^2=1,\ \ \|u\|_A^2=D\bigg\}.
\end{equation}
%$$
%\NEW
 Then, the inequality
$$
|u(\xi)|^2\le\|u\|_B^2\Bbb V\(\xi,\frac{\|u\|_A^2}{\|u\|_B^2}\)
$$
holds and, by definition, $\Bbb V$ is the smallest function for which it holds.
Thus, in particular, if the maximization problem \eqref{Int2} is solved, finding
the best constant $K$ in \eqref{Int-our} is reduced to finding the smallest  $K$
for which the following inequality holds:
$$
\Bbb V(\xi,D)\le K D^{1-\theta},\ \ D\ge\lambda_0.
$$
The solution of the maximization problem \eqref{Int2}  can be
expressed in terms of the Green's function of the following
elliptic operator of order $2m$:
$$
\Bbb A(\lambda):=A+\lambda B,\ \ \lambda>-\lambda_0.
$$
Namely, %\END
let $G_\lambda(x,\xi)$ be the Green's function of it in $\bar H$:
$$
\mathbb{A}(\lambda)G_\lambda(\cdot,\xi)=
\bar\delta(\cdot,\xi),\quad
\bar\delta(x,\xi):=\delta(x,\xi)-
\sum_{j=1}^k\varphi_j(x)\varphi_j(\xi),
$$
where $\delta(x,\xi)$ is the Dirac delta function.
From elliptic regularity we see that
 $G_\lambda(\cdot,\xi)\in \bar H^m\subset C(\mathcal{M})$.  % \NEW
 Then, as shown %\END
 in Theorem~\ref{Th4.main}, there exist a unique
extremal function in~\eqref{Int2}
$$
u_{D,\xi}(x)=
\frac{G_{\lambda(D)}(x,\xi)}
{\|G_{\lambda(D)}(\cdot,\xi)\|_B},\quad\text{so that}\quad \mathbb V(\xi,D)=
\frac{G_{\lambda(D)}(\xi,\xi)^2}
{\|G_{\lambda(D)}(\cdot,\xi)\|_B^2}\,,
$$
%\NEW
where
 %(and this is the main point!)
 $\lambda(D)$
 is a  monotone increasing function %\END
with $\lim_{D\to\infty}\lambda(D)=\infty$,
$\lim_{D\to\lambda_0}\lambda(D)=-\lambda_0$, %\NEW
defined as a {\it unique} solution of %\END
$$
\|u_{D,\xi}\|_A^2=D.
$$
%\NEW
 Moreover, as shown in
Theorem~\ref{Th4.main1}, the sharp constant $K$ in the
multiplicative inequality \eqref{Int-our} can be expressed in terms
of the Green's function as follows: %\END
$$
K=K(\xi):=\frac{1}{\theta^\theta(1-\theta)^{1-\theta}}\cdot
\sup_{\lambda>0}\bigg\{\lambda^{\theta}G_\lambda(\xi,\xi)\bigg\}<\infty.
$$
In addition, the extremal function $u_*$ exists if and only if the
supremum with respect to $\lambda$ is attained at a finite point
$\lambda_*<\infty$ and then $u_*(x)=\mathrm{const}\,G_{\lambda_*}(x,\xi)$.
We note that the approach used in \cite{I98JLMS}, \cite{I99JLMS}
in the case of $\mathbb{S}^n,\ n\ge1$ eventually reduces to the same
one-dimensional maximization problem.

The further progress (inequalities with correction terms) is based on the knowledge of the asymptotic behavior of the Green's function as $\lambda\to\infty$:
\begin{equation}\label{Int4}
G_\lambda(\xi,\xi)=\lambda^{-1}\(g_1 \lambda^{1-\theta}+g_2+g_3\lambda^{\theta-1}+o(\lambda^{\theta-1})\),
\end{equation}
where $g_1=g_1(\xi)>0$, $g_2=g_2(\xi)$, $g_3=g_3(\xi)$ are some given
numbers. This expansion is assumed, and its verification is one of the main technical tasks in particular examples in the second part of the paper.

Given~\eqref{Int5}, we prove in Proposition~\ref{Prop6.as} that the solution of~\eqref{Int2} has the following asymptotics as  $D\to\infty$:
\begin{equation}\label{Int5}
\Bbb V(\xi,D)=g_1 S D^{1-\theta}+g_2\frac1\theta-
\frac12 S^{-1}\frac{g_2^2(1-\theta)-
2\theta g_1g_3}{\theta^3g_1} D^{\theta-1}+o(D^{\theta-1}) ,\ \
S:=\frac1{\theta^\theta(1-\theta)^{1-\theta}}.
\end{equation}
Suppose that the third term is negative
(this is always the case when $g_3\le0$), then
\begin{equation}\label{Int6}
\Bbb V(\xi,D)<g_1 SD^{1-\theta}+\frac{g_2}\theta,
\end{equation}
for large $D$. We shall see in certain examples in the second part of the paper that this inequality holds for all $D$. This implies the multiplicative inequality with correction term (which is negative if $g_2<0$):
$$
|u(\xi)|^2\le g_1 S\|u\|_B^\theta\|u\|_A^{1-\theta}+\frac{g_2}\theta\|u\|^2_B
$$
with best possible constants.

In the end of first part of the paper we give in Theorem~\ref{Th.var} the following variational characterization of $\Bbb V(\xi,D)$:
$$
\Bbb V(\xi,D)=\inf_{\lambda\in[-\lambda_0,\infty)}\{(\lambda+D)G_\lambda(\xi,\xi)\}.
$$

This formula is very useful for proving  in certain cases
(which are few!) that~\eqref{Int6} holds for all D.
The scheme is as follows.
 Usually it is impossible to find the unique minimizer $\lambda(D)$ explicitly, but we
always have the asymptotic formula for it
$$
\lambda(D)=\frac{\theta}{1-\theta}D+
\frac{g_2}{g_1}SD^\theta+\dots,
$$
see~\eqref{lambda(D)}. We somehow truncate this expansion and denote the result as $\lambda_*(D)$. Then to prove~\eqref{Int6} we proceed as follows
$$
\aligned
   \Bbb V(\xi,D)-g_1 SD^{1-\theta}-\frac{g_2}\theta%=
%\inf_{\lambda\in[-\lambda_0,\infty)}
%\{(\lambda+D)G_\lambda(\xi,\xi)\}-g_1 %SD^{1-\theta}-\frac{g_2}\theta\le\\
\le
(\lambda_*(D)+D)G_{\lambda_*(D)}(\xi,\xi)-g_1 SD^{1-\theta}-\frac{g_2}\theta.
\endaligned
$$
Now for a fixed $\xi$ the right-hand side is an explicit function
of $D$ only and more or less standard estimates can be used  prove
that it is negative for all $D$. %\NEW
In fact, inequalities
\eqref{intro_tor.1.1} as well
 as a number of other inequalities with lower order correctors
 mentioned below are verified using this scheme. %\END

In the  second part of the paper we consider examples and applications of the general approach
described in the first part and we  use the above scheme  in Theorem~\ref{Th.tor.1.1} for purely analytic proof of inequalities~\eqref{intro_tor.1.1} (the proof in \cite{Zelik} involves some reliable computer calculations).

We first deal with manifolds without boundary and consider the case $A=(-\Delta)^m$, $B=(-\Delta)^l$,
where $\Delta$ is the Laplace--Beltrami operator. Then $\bar H^s(\mathcal{M})=H^s(\mathcal{M})\cap
\{\int_\mathcal{M} u(x)d\mathcal{M}=0\}$.
 The asymptotic formula~\eqref{Int4} for the Green's function on the torus $\mathbb{T}^n$  is obtained by the Poisson summation formula. This works when $l=0$. For $l>0$ there is a singularity,
which can be removed by differentiation. This, in turn, produces the problem of finding integration constants. The corresponding technique was proposed in~\cite{Zelik} and it is further developed here.
We consider only one example $n=3$, $m=2$, $l=1$
and prove the following inequality on the three-dimensional torus $\mathbb{T}^3$:
$$
\|u\|_{\infty}^2\le \frac1{2\pi}\|\nabla u\|\|\Delta u\|-\frac{-\beta_3}{4\pi^3}\|\nabla u\|^2,\qquad
u\in\bar H^2(\mathbb{T}^3),
$$
where the integration constant $\beta_3$ is expressed in terms
of a super-exponentially convergent series, $\beta_3=-8.91363291758515127$. Both constants are sharp and no extremals exist.

Next we study  inequalities on spheres. On $\mathbb{S}^2$ we consider the case when $A=(-\Delta)^{m}$,
$m>1/2$, and $B=I$. The corresponding Green's function is independent of $\xi$ and is given by the series
$$
G(\lambda)=\frac1{4\pi}\mu^m\sum_{n=1}^\infty
(2n+1) \varphi(\mu n(n+1)),\quad\text{where}\quad \mu=\lambda^{-1/m},\quad \varphi(x)=\frac1{x^m+1}.
$$
Thus, we need to find the asymptotic behavior of
functions of the type
$$
F(\mu)=\sum_{n=1}^\infty
(2n+1) f(\mu n(n+1))\quad\text{as}\quad\mu\to 0,
$$
where $f$ is sufficiently smooth and sufficiently fast decays at infinity. This is achieved with the
help of the Euler--Maclaurin formula in Lemma~\ref{L:E-M}:
$$
F(\mu)=\frac1{\mu}\int_0^\infty f(x)dx-\frac23f(0)-
\frac1{15}\mu f'(0)+
%\frac4{315}\mu^2f''(0)+
O(\mu^2).
$$
This gives the asymptotic expansion of the type~\eqref{Int4} for the Green's function and, hence, the asymptotic expansion~\eqref{Int5} for
the solution $\mathbb{V}(D)$ of the corresponding
maximization problem, in which the second and the third terms turn out to be both
negative. Therefore a negative correction term may exist. For $m=2$ we show that this is indeed the case and the following inequality (with two sharp constants and no extremal functions) holds for
$u\in\bar H^2(\mathbb{S}^2)$:
$$
\|u\|_\infty^2\le
\frac1{4}
\|u\|\|\Delta u\|-\frac1{3\pi}\|u\|^2.
$$
For larger $m$ ($m=3$, $m=4$) the negative correction terms still exist, but are  smaller than
the second terms in the expansion for $\mathbb{V}(D)$.

On $\mathbb{S}^3$ we consider only one example
with $A=(-\Delta)^2$ and $B=-\Delta$. The series expressing the Green's function can be
summed in closed form, and along the same lines we obtain a sharp inequality for $u\in\bar H^2(\mathbb{S}^3)$:
$$
\|u\|^2_\infty\le\frac 1{2\pi}\|\nabla u\|
\|\Delta u\|-\frac3{4\pi^2}\|\nabla u\|^2.
$$

In the remaining part of the paper we consider manifolds with boundary.
We first prove a sharp multiplicative  inequality on the half-line for the Bessel operator~\cite{Lapt_unpubl}.

Then we consider the case when $u\in H^1_0(0,L)$. The correction term still exists, but is exponentially small, namely, the following inequality holds:
$$
\|u\|^2_\infty\le\|u\|\|u'\|\bigl(1-2e^{-\frac{L\|u'\|}{\|u\|}}\bigr).
$$
Both  coefficients on the  right-hand side
are sharp and no extremal functions exist.

For a second order inequality on the interval
$(0,L)$
$$
\|u\|^2_\infty\le K\|u\|_{L_2(0,L)}^{3/2}\|u''\|_{L_2(0,L)}^{1/2},
$$
in the case when $u\in H^2_0(0,L)$ we can use extension by zero, and therefore $K$ is the same as on $\mathbb{R}$, namely,
$K=\frac{\sqrt{2}}{\sqrt[4]{27}}$ (see, \cite{Taikov}, and also~\eqref{cR^n}). In going over to a wider space $u\in H_0^1(0,L)\cap H^2(0,L)$ the constant may  increase. In Theorem~\ref{Th:4th_order} we show that this is indeed the case:
$
K=\frac{\sqrt{2}}{\sqrt[4]{27}}\cdot \coth\frac\pi 2=
\frac{\sqrt{2}}{\sqrt[4]{27}}\cdot 1.09033\dots\,
$,
and, in addition, there exists a unique extremal function.

A somewhat opposite result is obtained in Theorem~\ref{Th:ketai}: the constant $\frac1{2\pi}$ in the inequality
$$
\|u\|_\infty^2\le\frac1{2\pi}\|\nabla u\|\|\Delta u\|,
$$
is sharp both for $u\in H_0^2(\Omega)$ and
$u\in H_0^1(\Omega)\cap H^2(\Omega)$,
$\Omega\subset\mathbb{R}^3$.

In conclusion we observe that inequalities
of the type~\eqref{Int1} have applications to various problems in partial differential equations
and mathematical physics. For example, the far-going generalization to the matrix-valued case~\cite{D-L-L} of
the simplest inequality
$$
\|u\|^2\le\|u\|\|u'\|, \qquad u\in H^1(\mathbb{R})
$$
gives the best-known estimates of the Lieb--Thirring constants for the negative trace of the Schr\"odinger operators in $\mathbb{R}^n$~\cite{L-T}.

Accordingly, inequalities with correction terms~\eqref{intro_tor.1.1} imply
by the method of~\cite{E-F}  a simultaneous bound for
the negative trace and the number of negative eigenvalues for the Schr\"odinger operators on $\mathbb{S}^1$~\cite{I12JST}.

Finally, inequalities~\eqref{intro_tor.1.1} provide an improvement to the  well-known Carlson inequality
\cite{Carl} (see also \cite{Carl_book} and the references therein):
for $a_k\ge0$
\begin{equation}\label{Carlson}
\left(\sum_{k=1}^\infty a_k\right)^2\le
\pi\left(\sum_{k=1}^\infty a_k^2\right)^{1/2}
\left(\sum_{k=1}^\infty k^2a_k^2\right)^{1/2},
\end{equation}
where the constant $\pi$ is sharp and the strict inequality holds unless all $a_k=0$. In fact, as shown in~\cite{Hardy}, inequalities~\eqref{Hardy-Carlson} and
\eqref{Carlson} are equivalent, hence an improvement to \eqref{Hardy-Carlson} results in an improvement to~\eqref{Carlson}.

Given a sequence $a_k\ge0$, we set $a_0=0$
and consider a real-valued function
$$u(x)=\sum_{k=-\infty}^\infty a_{|k|}e^{ikx}$$
with mean value zero, for which
$$
\|u\|_\infty=u(0)=2\sum_{k=1}^\infty a_k,\ \
\|u\|^2=4\pi\sum_{k=1}^\infty a_k^2,\ \
\|u'\|^2=4\pi\sum_{k=1}^\infty k^2a_k^2.\ \
%\|u''\|^2=4\pi\sum_{k=1}^\infty k^4a_k^2
$$
Now  inequalities~\eqref{intro_tor.1.1} can be reformulated as
follows
\begin{equation}\label{Carlson_impr}
\aligned
&\left(\sum_{k=1}^\infty a_k\right)^2\le
\pi\left(\sum_{k=1}^\infty a_k^2\right)^{1/2}
\left(\sum_{k=1}^\infty k^2a_k^2\right)^{1/2}-
\sum_{k=1}^\infty a_k^2,\\
&\left(\sum_{k=1}^\infty a_k\right)^2\le
\frac{\sqrt{2}\,\pi}{\sqrt[4]{27}}\left(\sum_{k=1}^\infty a_k^2\right)^{3/4}
\left(\sum_{k=1}^\infty k^4a_k^2\right)^{1/4}-
\frac23\sum_{k=1}^\infty a_k^2,
\endaligned
\end{equation}
where all constants are sharp and the inequalities are strict
unless all $a_k=0$. %\NEW
Moreover, the interpolation inequalities on the tori $\Bbb T^n$ discussed above can
 be naturally considered as multi-dimensional generalizations of the Carlson
 inequality. %\END

\medskip\medskip

In this paper we  use standard notation. Sometime the $C$-norm (or $L_\infty$-norm)  is denoted by $\|\cdot\|_\infty$
and the $L_2$-norm is denoted by $\|\cdot\|$.

\section{Part I. General theory}\label{s1}
\subsection{Assumptions and preliminaries}\label{ss1} Let $\mathcal M$
be an $n$-dimensional compact Riemann manifold and let
 $$
 (u,v):=\int_{\mathcal M}u(x)v(x)\operatorname{mes}(\,dx)
 $$
  be the standard scalar product in $H:=L^2(\mathcal M,\operatorname{mes})$ (where $\operatorname{mes}$ stands for the measure on $\mathcal M$ associated with the Riemann metric). As usual, we define the Sobolev spaces $W^{l,p}(\mathcal M)$, $1\le p\le\infty$ as spaces of distributions whose derivatives up to order $l$ belong to $L^p(\mathcal M)$ (this definition works for integer $l$ only, for non-integer and/or negative $l$, the spaces $W^{l,p}(\mathcal M)$ are defined in a standard way using the interpolation and duality methods, see e.g., \cite{Tri} for the details). For the case $p=2$, we will denote by $H^m=H^m(\mathcal M)$ the Sobolev space $W^{l,2}(\mathcal M)$
 \par
We assume that $A$ is an elliptic self-adjoint differential operator of order $2m$ on $\mathcal M$ with
smooth coefficients. This operator is supposed to be non-negative
\begin{equation}\label{1.non}
(Au,u)\ge0,\ \ u\in H^m,
\end{equation}
although it may have a non-zero kernel. Then, due to the classical elliptic
theory (see e.g., \cite{Tri}), the kernel is finite-dimensional and is
generated by smooth functions
\begin{equation}\label{1.kernel}
\operatorname{ker}A:=\operatorname{span}\{\varphi_1,\cdots,\varphi_k\},\ \ \varphi_i\in C^\infty(\mathcal M),\ \ A\varphi_i=0.
\end{equation}
Without loss of generality, we may assume that
$$
\|\varphi_i\|_H=1 \ \ {\rm and} \ \ (\varphi_i,\varphi_j)=0,\ \ i\ne j.
$$
We denote by $\bar H$ the orthogonal complement of
$\operatorname{ker} A$ in $H=L^2(\mathcal M)$ and define
\begin{equation}\label{1.barH}
\bar H^{s}:=H^s(\mathcal M)\cap \bar H,\ \ s\in\R.
\end{equation}
Then, due to the elliptic theory, $A$ is an isomorphism between $\bar H^{s+2m}$ and $\bar H^s$ for all $s\in\R$ and,
in particular, the equivalent norm in $\bar H^m$ is given by
\begin{equation}\label{1.Anorm}
\|u\|_A^2:=(Au,u),\ \ u\in \bar H^m.
\end{equation}
We also introduce the second elliptic non-negative and self-adjoint differential
operator $B$ of order $2l<2m$ on $\mathcal M$ with smooth coefficients such that
\begin{equation}\label{1.Bdef}
\operatorname{ker} B\subset\operatorname{ker}A,\ \ {\rm and}\ \ B\,\operatorname{ker} A\subset\operatorname{ker}A.
\end{equation}
Then, as not difficult to see, the operator $B$ is an isomorphism between $\bar H^{s+2l}$ and $\bar H^{s}$ for all $s\in\R$ and,
in particular, the equivalent norm in $\bar H^l$ is given by
\begin{equation}\label{1.Bnorm}
\|u\|_B^2:=(Bu,u),\ \ u\in\bar H^l.
\end{equation}
In addition, we have the analogue of the Poincare inequality:
\begin{equation}\label{Poincare}
\|u\|_{B}^2\le \lambda_0^{-1}\|u\|_A^2,\ \ u\in \bar H^m,
\end{equation}
where $\lambda_0>0$ is the minimal eigenvalue of $B^{-1/2}AB^{-1/2}$ in
$\bar H$.

\subsection{Interpolation inequality and associated variational problem}\label{ss2}
If
\begin{equation}\label{2.dim}
l<\frac n2<m,
\end{equation}
then for every $u\in\bar H^m$ the following interpolation inequality holds:
\begin{equation}\label{2.int}
\|u\|_{C(\mathcal M)}\le C\|u\|_{B}^\theta\|u\|_{A}^{1-\theta},\ \
 \frac n2=l\theta+m(1-\theta),
\end{equation}
see \cite{Besov}, \cite{Tri}. Our aim is to refine inequality
\eqref{2.int} and, in particular, to find the best constant $C=C(n,m,l,\mathcal M)$
 or/and the lower order extra terms in it, etc. To this end, we fix an arbitrary point
  $\xi\in\mathcal M$ and a positive $D$ and
 consider the following maximization problem:
\begin{equation}\label{2.max}
\mathbb V(\xi,D):=\sup\bigg\{|u(\xi)|^2:\ \ u\in\bar H^m,\ \ \|u\|_B^2=1,\ \ \|u\|_A^2=D\bigg\}.
\end{equation}
Indeed, in view of \eqref{2.int} and \eqref{Poincare} the function $\mathbb V$ is
well-defined for all $\xi\in\mathcal M$ and all $D\ge\lambda_0$.
On the other hand, due to the homogeneity,
\begin{equation}\label{2.Vint}
|u(\xi)|^2\le \|u\|^2_B \mathbb V\(\xi,\frac{\|u\|_A^2}{\|u\|_B^2}\),\ \ \xi\in\mathcal M,\ \ u\in\bar H^m.
\end{equation}
In particular, the best constant $C$ in \eqref{2.int} is the minimal one for
which the inequality
\begin{equation}\label{2.best}
\sup_{\xi\in\mathbb M}\mathbb V(\xi,D)\le C^2D^{1-\theta}
\end{equation}
holds for all $D\ge\lambda_0$. This reduces the study of
inequality \eqref{2.int} to the investigation of the maximization
problem \eqref{2.max}.

\subsection{Green's functions and reproducing functionals}\label{ss3}
In this subsection we prepare some technical tools which are necessary
to give the analytic description of the function $\Bbb V$ in terms of
the Green's functions of the appropriate elliptic operators on $\mathcal{M}$.
Namely, let
\begin{equation}\label{3.lambda}
\Bbb A(\lambda):=A+\lambda B,\ \ \lambda>-\lambda_0,
\end{equation}
and let the function $G_{\lambda}(x,\xi)$ solve
\begin{equation}\label{3.green}
\Bbb A(\lambda)G_\lambda(\cdot,\xi)=\bar{\delta}(\cdot,\xi),\ \
\bar\delta(x,\xi)=\delta(x,\xi)-\sum_{i=1}^k\varphi_i(\xi)\varphi_i(x),
\end{equation}
where $\delta(x,\xi)$ is the Dirac $\delta$-function at point
$\xi\in\mathcal{M}$ and $\bar\delta$ is its `projection' on the space
$\bar H$. Then, due to the Sobolev embedding theorem and \eqref{2.dim},
$$
\bar\delta(\cdot,\xi)\in\bar H^{-m}
$$
and, therefore, due to the elliptic regularity,
\begin{equation}\label{2.G}
G_\lambda(\cdot,\xi)\in \bar H^m\subset C(\mathcal M).
\end{equation}
In addition, since $\Bbb A(\lambda)$ is self-adjoint,
$G_\lambda(x,\xi)=G_\lambda(\xi,x)$ and, consequently, \eqref{2.G}
implies that
$$
 G_{\lambda}\in C(\mathcal M\times\mathcal M).
$$
Moreover,
\begin{equation}\label{3.G-u}
u(\xi)=(\bar\delta(\cdot,\xi),u)=(\Bbb A(\lambda)G_\lambda(\cdot,\xi),u)=
(G_\lambda(\cdot,\xi),\Bbb A(\lambda)u).
\end{equation}
for all $u\in\bar H^m$. In particular, taking $u(x):=G_\lambda(x,\xi)$,
we have
\begin{equation}\label{3.GG}
G_\lambda(\xi,\xi)=(\Bbb A(\lambda)G_\lambda(\cdot,\xi),G_\lambda(\cdot,\xi))=\|G_\lambda(\cdot,\xi)\|^2_A+\lambda\|G_\lambda(\cdot,\xi)\|^2_B>0.
\end{equation}
The following simple lemma is nevertheless the main technical tool for
the method of reproducing functionals and will allow us to find the
analytic expression for the function $\Bbb V$.
\begin{lemma}\label{Lem3.est} Let the above assumptions hold.
Then, for every $u\in\bar H^m$ and for every $\lambda>-\lambda_0$
\begin{equation}\label{3.main}
|u(\xi)|^2\le G_\lambda(\xi,\xi)\(\|u\|_A^2+\lambda\|u\|^2_B\).
\end{equation}
Moreover, the equality holds if and only if
$u(x)=c\, G_\lambda(x,\xi)$ for some $c\in\mathbb{R}$.
\end{lemma}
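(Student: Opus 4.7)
The plan is to recognize the inequality as Cauchy--Schwarz applied in a carefully chosen inner product, namely the energy inner product
$$
\langle u,v\rangle_\lambda:=(\mathbb A(\lambda)u,v)=(Au,v)+\lambda(Bu,v),
$$
and to view $G_\lambda(\cdot,\xi)$ as the reproducing kernel for point evaluation at $\xi$ with respect to $\langle\cdot,\cdot\rangle_\lambda$.

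First I would verify that $\langle\cdot,\cdot\rangle_\lambda$ is a genuine inner product on $\bar H^m$ whenever $\lambda>-\lambda_0$. Symmetry is clear from the self-adjointness of $A$ and $B$. Positive definiteness follows from the Poincar\'e-type inequality \eqref{Poincare}, which gives $(Au,u)\ge\lambda_0(Bu,u)$ on $\bar H^m$, so that
$$
\langle u,u\rangle_\lambda\ge(\lambda+\lambda_0)\|u\|_B^2,
$$
and $\|\cdot\|_B$ is a norm on $\bar H^m$ because $B$ is an isomorphism on the scale $\bar H^s$, cf.\ \eqref{1.Bnorm}.

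Next I would exploit the reproducing identity \eqref{3.G-u}, which after using the self-adjointness of $\mathbb A(\lambda)$ reads
$$
u(\xi)=\langle G_\lambda(\cdot,\xi),u\rangle_\lambda,\qquad u\in\bar H^m.
$$
Applying Cauchy--Schwarz for $\langle\cdot,\cdot\rangle_\lambda$ and invoking \eqref{3.GG} to identify $\langle G_\lambda(\cdot,\xi),G_\lambda(\cdot,\xi)\rangle_\lambda=G_\lambda(\xi,\xi)$ yields
$$
|u(\xi)|^2\le G_\lambda(\xi,\xi)\bigl(\|u\|_A^2+\lambda\|u\|_B^2\bigr),
$$
which is \eqref{3.main}. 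The equality clause follows at once from the equality case of Cauchy--Schwarz: $|u(\xi)|^2$ coincides with the right-hand side iff $u$ is proportional to $G_\lambda(\cdot,\xi)$ in $\bar H^m$.

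There is essentially no obstacle beyond checking positive definiteness of $\langle\cdot,\cdot\rangle_\lambda$; all of the analytic content (ellipticity, regularity of $G_\lambda$, the reproducing identity, and the formula for $G_\lambda(\xi,\xi)$) has already been established in Subsection~\ref{ss3}, so the lemma is a one-line application of Cauchy--Schwarz once the correct inner product is identified.
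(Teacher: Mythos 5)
Your proof is correct and is essentially identical to the paper's: both recognize \eqref{3.G-u} as a reproducing identity in the $\mathbb A(\lambda)$-energy inner product and apply Cauchy--Schwarz, with the equality case coming from proportionality. The only difference is that you spell out the positive definiteness of $\langle\cdot,\cdot\rangle_\lambda$ via \eqref{Poincare}, which the paper asserts in a single clause.
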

\begin{proof} Indeed, since $\Bbb A(\lambda)$ is positive definite, by the
Cauchy-Schwartz inequality,
$$
|u(\xi)|^2=(G_\lambda(\cdot,\xi),\Bbb A(\lambda)u)^2\le
(\Bbb A(\lambda)G_\lambda,G_\lambda)(\Bbb A(\lambda)u,u)=
G_\lambda(\xi,\xi)\(\|u\|^2_A+\lambda\|u\|^2_B\)
$$
and the equality here holds if and only if $u(x)=c G_\lambda(x,\xi)$.
Thus, the lemma is proved.
\end{proof}
We note that, up to the moment, $\lambda\ge-\lambda_0$ is a free parameter
in \eqref{3.main}. The next lemma
 shows that this parameter can be chosen in such way that the quotient
 $\|G_\lambda(\cdot)\|^2_A/\|G_\lambda(\cdot,\xi)\|^2_B$ achieves any
 prescribed value.
 \begin{lemma}\label{Lem3.delta} The function
 \begin{equation}\label{3.D}
 D(\lambda)=D_\xi(\lambda):=\frac{\|G_\lambda(\cdot,\xi)\|^2_A}{\|G_\lambda(\cdot,\xi)\|_B^2}
 \end{equation}
 is strictly increasing on $[-\lambda_0,\infty)$. Moreover,
 \begin{equation}\label{3.lims}
 \lim_{\lambda\to-\lambda_0}D(\lambda)=\lambda_0\ \ {\rm and}
 \ \ \lim_{\lambda\to+\infty}D(\lambda)=\infty,
 \end{equation}
 so the inverse function
 $D\to\lambda(D)$ is well defined on $[\lambda_0,\infty)$.
\end{lemma}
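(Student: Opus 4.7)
The natural plan is to diagonalize the generalized eigenvalue problem $Au = \mu Bu$ on $\bar H$ and then work with the resulting explicit spectral series. Let $\{u_k\}_{k\ge 1}$ be a $B$-orthonormal basis of $\bar H$ of generalized eigenfunctions, $(Bu_j,u_k) = \delta_{jk}$, $Au_k = \lambda_k Bu_k$, with $\lambda_0 \le \lambda_1 \le \dots \to \infty$. Then $\bar\delta(\cdot,\xi) = \sum_k u_k(\xi)\,Bu_k$ in $\bar H^{-m}$, and inverting $\Bbb A(\lambda) = A + \lambda B$ term by term yields
\begin{equation*}
G_\lambda(x,\xi) = \sum_{k=1}^\infty \frac{u_k(\xi)\,u_k(x)}{\lambda_k+\lambda},\quad
\|G_\lambda(\cdot,\xi)\|_A^2 = \sum_{k=1}^\infty \frac{\lambda_k\,u_k(\xi)^2}{(\lambda_k+\lambda)^2},\quad
\|G_\lambda(\cdot,\xi)\|_B^2 = \sum_{k=1}^\infty \frac{u_k(\xi)^2}{(\lambda_k+\lambda)^2}.
\end{equation*}
Thus $D(\lambda)$ is a weighted arithmetic mean of $\{\lambda_k\}$ with positive weights $a_k(\lambda) := u_k(\xi)^2(\lambda_k+\lambda)^{-2}$, and the whole statement reduces to an elementary claim about this mean.

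For monotonicity, write $D = f/g$ with $f = \sum_k \lambda_k a_k$, $g = \sum_k a_k$, differentiate, and symmetrize over the index pair $(j,k)$; using $a_k'(\lambda) = -2 u_k(\xi)^2(\lambda_k+\lambda)^{-3}$ a short calculation gives
\begin{equation*}
D'(\lambda)\,g(\lambda)^2 \;=\; \sum_{j,k} \frac{u_j(\xi)^2\,u_k(\xi)^2\,(\lambda_j-\lambda_k)^2}{(\lambda_j+\lambda)^3\,(\lambda_k+\lambda)^3} \;\ge\; 0.
\end{equation*}
Strict positivity follows from the observation that $\bar\delta(\cdot,\xi)$, being a genuine distribution rather than a smooth function, cannot be a finite linear combination of the smooth functions $Bu_k$; hence $\{k:u_k(\xi)\ne 0\}$ is infinite and in particular contains indices with distinct eigenvalues.

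For the limits, analyze the dominant summands. As $\lambda\to-\lambda_0$, the terms with $\lambda_k=\lambda_0$ (at least one of which has $u_k(\xi)\ne 0$ in the generic situation) contribute singular summands $\sim(\lambda_0+\lambda)^{-2}$ which dominate both $f$ and $g$, forcing the weighted mean $D(\lambda)\to\lambda_0$. As $\lambda\to\infty$, insert the assumed asymptotic $G_\lambda(\xi,\xi)\sim g_1\lambda^{-\theta}$ of \eqref{Int4} into the identities $\|G_\lambda\|_B^2 = -\partial_\lambda G_\lambda(\xi,\xi) \sim g_1\theta\,\lambda^{-\theta-1}$ and $\|G_\lambda\|_A^2 = G_\lambda(\xi,\xi) - \lambda\|G_\lambda\|_B^2 \sim g_1(1-\theta)\,\lambda^{-\theta}$ (both obtained from the series above) to conclude $D(\lambda) \sim \frac{1-\theta}{\theta}\,\lambda \to \infty$.

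The main obstacle is the edge case at $\lambda=-\lambda_0$: if every first-eigenvalue eigenfunction happens to vanish at $\xi$ (so that $G_\lambda(\cdot,\xi)$ extends regularly past $-\lambda_0$), the singular-dominance argument fails and the boundary value $\lambda_0$ has to be extracted from the non-vanishing modes together with the monotonicity just established. A secondary, more technical point is justifying the termwise manipulations of the spectral series and the validity of the symmetrization that produces the clean expression for $D'(\lambda)$; both follow from the elliptic regularity bounds already recorded in \eqref{2.G}--\eqref{3.GG}.
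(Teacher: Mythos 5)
Your approach is genuinely different from the paper's: you diagonalize the generalized eigenvalue problem $Au=\mu Bu$ and work with the resulting spectral series for $G_\lambda$, whereas the paper manipulates the PDE \eqref{3.green} and \eqref{2.diff} directly. For the monotonicity claim, your route is actually cleaner: the symmetrization identity
\begin{equation*}
\bigl(f'g-fg'\bigr)(\lambda)=\sum_{j,k}\frac{u_j(\xi)^2u_k(\xi)^2(\lambda_j-\lambda_k)^2}{(\lambda_j+\lambda)^3(\lambda_k+\lambda)^3}\ge0
\end{equation*}
is an explicit Lagrange-type rearrangement of exactly the Cauchy--Schwarz inequality the paper uses (the paper's \eqref{3.cauchy}), and your argument for strictness (if $u_k(\xi)$ vanished for all but finitely many $k$, then $\bar\delta(\cdot,\xi)$ would be smooth) is the same observation the paper makes about $G_\lambda(\cdot,\xi)$ not being $C^\infty$. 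Modulo justifying term-by-term differentiation of the series, which you correctly flag, this part is fine.

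The limit at $\lambda\to+\infty$ is where there is a genuine gap. You invoke the asymptotic \eqref{Int4}, but that expansion is introduced later in Section 2.5 as a hypothesis to be \emph{verified case-by-case} (\eqref{6.expand}), and it is not available at the point where this lemma is proved. Moreover, even granting the leading-order asymptotic $G_\lambda(\xi,\xi)\sim g_1\lambda^{-\theta}$ (which the paper mentions can be obtained by localization/frozen coefficients), you silently differentiate it to deduce $\|G_\lambda\|_B^2\sim g_1\theta\lambda^{-\theta-1}$; differentiating an asymptotic relation is not automatic and needs a separate argument (monotonicity of $f$ and $-f'$, a Tauberian step, or direct Weyl-law estimates on the spectral series). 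The paper instead proves $D(\lambda)\to\infty$ by contradiction and compactness: if $D(\lambda)$ stayed bounded, the bound $\|G_\lambda\|_A\le C_2\lambda^{-1}$ would let one pass to a weak limit $w_\infty=\lim\lambda G_\lambda$ in $\bar H^m$ with $Bw_\infty=\bar\delta$, hence $\bar\delta\in\bar H^{m-2l}$, which contradicts \eqref{2.dim}. This is self-contained, uses no asymptotics, and is the argument you should adopt here.

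Finally, on the limit at $\lambda\to-\lambda_0$: the paper's proof (your singular-dominance argument, in different notation) also tacitly assumes that $\sum_{i=1}^p\tilde\psi_i(\xi)^2>0$, i.e.\ that not every first eigenfunction vanishes at $\xi$. You were right to flag this. It is not a defect of your proposal relative to the paper, but both you and the paper leave this possible degeneracy unaddressed; in the symmetric examples of Part II it is automatically excluded, and in the general case it at worst restricts the domain of $\lambda(D)$ at its lower endpoint for exceptional $\xi$.
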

\begin{proof} We
will show that $D'(\lambda)>0$. To this end, we note that the function
$G_\lambda(x,\xi)$ is smooth with respect to $\lambda$  and
the derivative $G'_\lambda(x,\xi):=\partial_\lambda G_\lambda(x,\xi)$
solves the equation
\begin{equation}\label{2.diff}
AG'_\lambda(\cdot,\xi)+\lambda BG'_\lambda(\cdot,\xi)=
\Bbb A(\lambda)G'_\lambda(\cdot,\xi)=-B G_\lambda(\cdot,\xi),
\end{equation}
so that
$$
AG'_\lambda(\cdot,\xi)=-BG_\lambda(\cdot,\xi)-\lambda
BG'_\lambda(\cdot,\xi), \quad
G'_\lambda(\cdot,\xi)=-\mathbb{A}(\lambda)^{-1}B G_\lambda.
$$
Therefore,
\begin{multline}\label{3.long}
\frac12\|G_{\lambda}(\cdot,\xi)\|^4_B D'(\lambda)=\\=\|G_\lambda(\cdot,\xi)\|^2_B(AG'_\lambda(\cdot,\xi),G_\lambda(\cdot,\xi))-(BG'_\lambda(\cdot,\xi),G_\lambda(\cdot,\xi))\| G_\lambda(\cdot,\xi)\|^2_A=\\=-\|G_\lambda(\cdot,\xi)\|^4_{B}-\lambda\|G_\lambda(\cdot,\xi)\|^2_{B}(BG'_\lambda(\cdot,\xi),G_\lambda(\cdot,\xi))-
(BG'_\lambda(\cdot,\xi),G_\lambda(\cdot,\xi))\|G_\lambda(\cdot,\xi)\|^2_A=\\=
-\|G_\lambda(\cdot,\xi)\|^4_B-(BG'_\lambda(\cdot,\xi),G_\lambda(\cdot,\xi))(\Bbb A(\lambda) G_\lambda(\cdot,\xi),G_\lambda(\cdot,\xi))=\\=(\Bbb A^{-1}(\lambda) BG_\lambda(\cdot,\xi),BG_\lambda(\cdot,\xi))(\Bbb A(\lambda) G_\lambda(\cdot,\xi),G_\lambda(\cdot,\xi))-\|G_\lambda(\cdot,\xi)\|^4_B.
\end{multline}
By the Cauchy-Schwartz inequality we have
\begin{multline}\label{3.cauchy}
\|v\|_B^4=(Bv,v)^2=(\Bbb A^{-1/2}(\lambda)B v,\Bbb A^{1/2}(\lambda)v)^2
\le\\\le \|\Bbb A^{-1/2}(\lambda)Bv\|_H^2\|\Bbb A^{1/2}(\lambda)v\|_H^2=
(\Bbb A^{-1}(\lambda)Bv,Bv)(\Bbb A(\lambda)v,v)
\end{multline}
for all $v\in\bar H^m$. Taking $v(x)=G_\lambda(x,\xi)$, we see from \eqref{3.long} that $D'(\lambda)\ge0$. Moreover, equality in \eqref{3.cauchy} holds only if
$$
\alpha\Bbb A(\lambda)v=\beta Bv,\ \ \alpha,\beta\in\R
$$
and from elliptic regularity $v$ should be at least $C^\infty$-smooth.
However, $v=G_\lambda(\cdot,\xi)$ cannot be $C^\infty$-smooth, so the
equality is impossible and $D'(\lambda)>0$. Thus, we have proved that
$D(\lambda)$ is strictly increasing.

We now need to verify \eqref{3.lims}. We start with the limit
$\lambda\to-\lambda_0$. To solve \eqref{3.green} near
$\lambda=-\lambda_0$, we introduce
$w(x,\xi)=B^{1/2}G_\lambda(x,\xi)$. Then
\begin{equation}\label{3.eigen}
B^{-1/2}AB^{-1/2}w+\lambda w=B^{-1/2}\bar\delta(x,\xi)
\end{equation}
and, by definition, $\lambda_0$ is the smallest eigenvalue of the
self-adjoint positive operator $B^{-1/2}AB^{-1/2}$ in $\bar H$.
Clearly,  this operator has compact inverse, so its spectrum is
discrete. Let $\{\psi_k\}_{k=1}^\infty$ be family of the
orthonormal eigenfunctions of it, and let the first $p$ ($p\ge1$)
of them correspond to the smallest eigenvalue $\lambda_0$. Clearly,
$\psi_i\in C^\infty(\mathbb M)$. Seeking $w$ in the form
\begin{equation}\label{w-eqn}
w(x,\xi)=\sum_{i=1}^p a_i\psi_i(x)+\sum_{i=p+1}^\infty a_i\psi_i(x)
=:w_0+w^\bot,
\end{equation}
substituting this into~\eqref{3.eigen}, and taking the scalar
product with $\psi_1,\dots,\psi_p$, we find the coefficients of the
$w_0$-part of the solution:
$$
a_i=\frac1{\lambda+\lambda_0}(B^{-1/2}\bar\delta(\cdot,\xi),\psi_i)=
\frac{\tilde\psi_i(\xi)}{\lambda+\lambda_0},
\qquad i=1,\dots,p,
$$
where we set $\tilde\psi_i:=B^{-1/2}\psi_i$. Observe that the
$w^\bot$-part of the solution remains bounded as
$\lambda\to-\lambda_0$. Applying $B^{-1/2}$ to~\eqref{w-eqn} we
find $G_\lambda(x,\xi)$:
\begin{equation}\label{3.split}
G_\lambda(x,\xi)=\frac1{\lambda+\lambda_0}\sum_{i=1}^p\tilde\psi_i(\xi)\tilde\psi_i(x)+G^\bot_\lambda(x,\xi),
\end{equation}
where the part $G_\lambda^\bot(x,\xi)$ remains bounded as $\lambda\to-\lambda_0$. Thus,
$$
\lim_{\lambda\to-\lambda_0}D(\lambda)=\frac{\|\sum_{i=1}^p\tilde\psi_i(\xi)\tilde\psi_i(\cdot)\|_A^2}
{\|\sum_{i=1}^p\tilde\psi_i(\xi)\tilde\psi_i(\cdot)\|_B^2}=\lambda_0\frac{\sum_{i=1}^p\tilde\psi_i(\xi)^2}{\sum_{i=1}^p\tilde\psi_i(\xi)^2}=\lambda_0.
$$
Let us consider the case $\lambda\to\infty$. Assume that \eqref{3.lims} is wrong and we have
$$
\lim_{\lambda\to\infty}D(\lambda)=D_{max}<\infty.
$$
Then,
\begin{equation}\label{3.bound}
\|G_\lambda(\cdot,\xi)\|^2_A\le D_{max}\|G_\lambda(\cdot,\xi)\|^2_B.
\end{equation}
Multiplying equation \eqref{3.green} by $G_\lambda(x,\xi)$ integrating in $x\in\mathcal M$, and using the embedding $\bar H^m\subset C$ and \eqref{3.bound}, we get
$$
\|G_\lambda(\cdot,\xi)\|^2_A+\lambda\|G_\lambda(\cdot,\xi)\|^2_B=G_\lambda(\xi,\xi)\le\|G_\lambda(\cdot,\xi)\|_{C}\le C\|G_\lambda(\cdot,\xi)\|_A\le C_1\|G_\lambda(\cdot,\xi)\|_B.
$$
Due to this estimate, we have
$$
\|G_\lambda(\cdot,\xi)\|_{A}\le C\|G_\lambda(\cdot,\xi)\|_B\le C_2\lambda^{-1},
$$
where $C_2$ is independent of $\lambda\to\infty$. Thus,
$w_\lambda(x,\xi):=\lambda G_\lambda(x,\xi)$ is uniformly bounded in
$\bar H^m$ and, without loss of generality, we may assume that
$w_\lambda(\cdot,\xi)\to w_\infty(\cdot,\xi)$ weakly in this space.
Then, obviously, $w_\infty(\cdot,\xi)\in \bar H^m$ and
$$
Bw_\infty(\cdot,\xi)=\bar\delta(\cdot,\xi).
$$
In particular, since $w_\infty\in\bar H^m$, we have
$\bar\delta(\cdot,\xi)\in \bar H^{m-2l}$. Due to the assumption
\eqref{2.dim} we have $m-2l>-n/2$, so that $\delta(\cdot,\xi)\in H^{-s}$
for some $s<n/2$ which is impossible. Thus, \eqref{3.lims} is proved
and the lemma is also proved.
\end{proof}

\subsection{Main result} \label{ss4} The aim of this subsection is to give
the analytic expression for the function $\Bbb V$ in terms of the
Green's
functions introduced above. This result is stated in the following theorem.

\begin{theorem}\label{Th4.main} Let the above assumptions hold. Then, for
every $D\in[\lambda_0,\infty)$ and every $\xi\in\mathcal M$, the supremum
in \eqref{2.max} is the maximum and this maximum is achieved in a unique point
\begin{equation}\label{4.ext}
u_{D,\xi}(x):=\frac{G_{\lambda(D)}(x,\xi)}{\|G_{\lambda(D)}(\cdot,\xi)\|_B},
\end{equation}
where the function $\lambda(D)$
is defined in Lemma \ref{Lem3.delta}. In particular,
\begin{equation}\label{4.an}
\Bbb V(\xi,D)=\(\frac{G_{\lambda(D)}(\xi,\xi)}{\|G_{\lambda(D)}(\cdot,\xi)\|_B}\)^2.
\end{equation}
\end{theorem}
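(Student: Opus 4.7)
The plan is to combine the pointwise inequality of Lemma~\ref{Lem3.est} with the calibration result Lemma~\ref{Lem3.delta} and then exploit the rigidity of the equality case in Lemma~\ref{Lem3.est}. First I would note that for any admissible $u$ in the maximization problem \eqref{2.max} and any $\lambda>-\lambda_0$, Lemma~\ref{Lem3.est} yields
\[
|u(\xi)|^2\le G_\lambda(\xi,\xi)\bigl(\|u\|_A^2+\lambda\|u\|_B^2\bigr)=G_\lambda(\xi,\xi)(D+\lambda).
\]
This holds for every $\lambda$, but I would specialize to the distinguished value $\lambda=\lambda(D)$ supplied by Lemma~\ref{Lem3.delta}, since this is the value for which the corresponding Green's function itself lies on the constraint surface.

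Next I would verify that the candidate $u_{D,\xi}$ defined by \eqref{4.ext} is admissible and saturates this bound. Membership $u_{D,\xi}\in\bar H^m$ follows from \eqref{2.G}, and $\|u_{D,\xi}\|_B^2=1$ holds by normalization. The other constraint $\|u_{D,\xi}\|_A^2=D$ is exactly the defining identity $D(\lambda(D))=D$ of the inverse function produced by Lemma~\ref{Lem3.delta}, applied to the quotient in \eqref{3.D}. To compute $|u_{D,\xi}(\xi)|^2$ I would invoke the pointwise identity \eqref{3.GG}, which after dividing by $\|G_{\lambda(D)}(\cdot,\xi)\|_B^2$ gives
\[
\frac{G_{\lambda(D)}(\xi,\xi)}{\|G_{\lambda(D)}(\cdot,\xi)\|_B^2}=D(\lambda(D))+\lambda(D)=D+\lambda(D).
\]
Combined with the formula for $u_{D,\xi}$ this yields
\[
|u_{D,\xi}(\xi)|^2=\frac{G_{\lambda(D)}(\xi,\xi)^2}{\|G_{\lambda(D)}(\cdot,\xi)\|_B^2}=G_{\lambda(D)}(\xi,\xi)\bigl(D+\lambda(D)\bigr),
\]
which both matches the upper bound above (so the supremum is attained) and gives the claimed formula \eqref{4.an}.

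For uniqueness, any maximizer $u_*$ must turn the bound of Lemma~\ref{Lem3.est} into an equality at $\lambda=\lambda(D)$, and the equality clause of that lemma forces $u_*=c\,G_{\lambda(D)}(\cdot,\xi)$; then $\|u_*\|_B^2=1$ determines $c$ up to sign, and since the functional $u\mapsto|u(\xi)|^2$ is sign-insensitive this is the advertised uniqueness. The only step with genuine content is the choice of $\lambda$: the pointwise bound is valid for every admissible $\lambda$, but exactly one value makes the optimizer $c\,G_\lambda(\cdot,\xi)$ sit on the constraint surface $\|u\|_A^2=D$. That the map $\lambda\mapsto D(\lambda)$ is a bijection $(-\lambda_0,\infty)\to(\lambda_0,\infty)$ is precisely the content of Lemma~\ref{Lem3.delta}, so the hard work has already been carried out there; the proof of Theorem~\ref{Th4.main} itself is a direct assembly of \eqref{3.GG}, Lemma~\ref{Lem3.est}, and Lemma~\ref{Lem3.delta}.
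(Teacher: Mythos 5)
Your proof is correct and follows essentially the same route as the paper's: apply Lemma~\ref{Lem3.est} at the calibrated value $\lambda=\lambda(D)$ from Lemma~\ref{Lem3.delta}, check that $u_{D,\xi}$ saturates the bound via \eqref{3.GG}, and use the equality clause of Lemma~\ref{Lem3.est} for uniqueness. The only difference is that you spell out the admissibility check for $u_{D,\xi}$ and the sign ambiguity in $c$ a bit more explicitly than the paper does, but the argument is the same.
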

\begin{proof} Indeed, let $u\in \bar H^m$, be such that $\|u\|_{B}=1$ and $\|u\|_A^2=D$. Then, according to Lemma \ref{Lem3.delta}, there is a unique
$\lambda=\lambda(D)$ which solves \eqref{3.D}. According to \eqref{3.main} with $\lambda=\lambda(D)$,
$$
|u(\xi)|^2\le G_{\lambda(D)}(\xi,\xi)(D+\lambda(D))
$$
and the equality here holds if and only if
$u(x)=c G_{\lambda(D)}(x,\xi)$. Taking the $B$-norm of both sides of
this equality, we see that $c=\|G_{\lambda(D)}(\cdot,\xi)\|_B^{-1}$ and
$u(x)=u_{D,\xi}(x)$. This finishes the proof of the theorem.
\end{proof}
Thus, in order to find $\Bbb V$, we need three functions
\begin{equation}\label{4.fgh}
f_\xi(\lambda):=G_\lambda(\xi,\xi),\ \ g_\xi(\lambda):=\|G_\lambda(\cdot,\xi)\|^2_B,\ \ h_\xi(\lambda):=\|G_\lambda(\cdot,\xi)\|^2_A.
\end{equation}
Then
\begin{equation}\label{4.fgh1}
D(\lambda)=\frac{h_\xi(\lambda)}{g_\xi(\lambda)}\ \ {\rm and}\ \
 \Bbb V(\xi,\lambda):=\Bbb V(\xi,D(\lambda))=\frac{f_\xi(\lambda)^2}{g_\xi(\lambda)},\ \ \lambda\in[-\lambda_0,\infty)
\end{equation}
and we have the parametric representation of the function $D\to\Bbb V(\xi,D)$. The next lemma shows that
the functions $g$ and $h$ can be expressed in terms of $f$.
\begin{lemma}\label{Lem4.relations}
The functions $f$, $g$ and $h$ satisfy the following equalities
\begin{equation}\label{4.fgfh}
f'_\xi(\lambda):=\frac{d}{d\lambda}f_\xi(\lambda)=-g_\xi(\lambda),\ \ h_\xi(\lambda)=f_\xi(\lambda)+\lambda f'_\xi(\lambda).
\end{equation}
\end{lemma}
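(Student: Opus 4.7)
The plan is to derive the two identities separately, the second being an immediate consequence of the identity \eqref{3.GG} once the first is established.

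First I would observe that the second identity is essentially free. From \eqref{3.GG} we have the algebraic relation
$$
f_\xi(\lambda) = G_\lambda(\xi,\xi) = \|G_\lambda(\cdot,\xi)\|_A^2 + \lambda\|G_\lambda(\cdot,\xi)\|_B^2 = h_\xi(\lambda) + \lambda\,g_\xi(\lambda).
$$
Once the first identity $f'_\xi = -g_\xi$ is in hand, this immediately rearranges to $h_\xi(\lambda) = f_\xi(\lambda) - \lambda\,g_\xi(\lambda) = f_\xi(\lambda) + \lambda\,f'_\xi(\lambda)$.

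Second, to establish $f'_\xi(\lambda) = -g_\xi(\lambda)$, I would differentiate the Green's function equation \eqref{3.green} in $\lambda$. The smoothness of $G_\lambda(x,\xi)$ in the parameter $\lambda$ (already invoked in the proof of Lemma~\ref{Lem3.delta}) legitimates writing $G'_\lambda := \partial_\lambda G_\lambda$, which satisfies \eqref{2.diff}:
$$
\mathbb{A}(\lambda)G'_\lambda(\cdot,\xi) = -B\,G_\lambda(\cdot,\xi).
$$
Since $G'_\lambda(\cdot,\xi)\in\bar H^m$ (by elliptic regularity applied to \eqref{2.diff}), the reproducing identity \eqref{3.G-u}, applied with $u=G'_\lambda(\cdot,\xi)$, gives
$$
f'_\xi(\lambda) = G'_\lambda(\xi,\xi) = \bigl(G_\lambda(\cdot,\xi),\,\mathbb{A}(\lambda)G'_\lambda(\cdot,\xi)\bigr) = -\bigl(G_\lambda(\cdot,\xi),\,B\,G_\lambda(\cdot,\xi)\bigr) = -\|G_\lambda(\cdot,\xi)\|_B^2 = -g_\xi(\lambda),
$$
which is the desired formula.

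The only point requiring any care is the smoothness of $\lambda\mapsto G_\lambda(x,\xi)$ in a topology strong enough to justify evaluating the derivative pointwise at $x=\xi$ and applying the reproducing identity to $G'_\lambda$. This follows from the analyticity of the resolvent $\mathbb{A}(\lambda)^{-1}$ as a bounded operator from $\bar H^{-m}$ to $\bar H^m$ for $\lambda > -\lambda_0$, combined with $\bar H^m\hookrightarrow C(\mathcal{M})$, so this step is routine rather than a genuine obstacle.
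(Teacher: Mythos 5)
Your proof is correct. The two arguments share the same raw ingredients, namely equation \eqref{2.diff} for $G'_\lambda$ and self-adjointness of $\mathbb A(\lambda)$, but the routes differ at one point. The paper starts from the quadratic-form representation $f_\xi(\lambda)=(\mathbb A(\lambda)G_\lambda,G_\lambda)=\|G_\lambda\|_A^2+\lambda\|G_\lambda\|_B^2$ and applies the product rule in $\lambda$, which yields $f'_\xi = 2(\mathbb A(\lambda)G'_\lambda,G_\lambda)+g_\xi = -2g_\xi+g_\xi=-g_\xi$; that version requires tracking the explicit $\lambda$-dependence of $\mathbb A(\lambda)$ and the factor-of-two cancellation. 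You instead read off $f'_\xi(\lambda)=G'_\lambda(\xi,\xi)$ directly and apply the reproducing identity \eqref{3.G-u} with $u=G'_\lambda(\cdot,\xi)$, so that $G'_\lambda(\xi,\xi)=(G_\lambda,\mathbb A(\lambda)G'_\lambda)=-(G_\lambda,BG_\lambda)=-g_\xi$ in a single step. Your route is a bit more streamlined and avoids the bilinear-form bookkeeping, at the small cost of having to note that $G'_\lambda(\cdot,\xi)\in\bar H^m$ so that \eqref{3.G-u} applies, a point you correctly flag. The treatment of the second identity (rearranging $f=h+\lambda g$ using $g=-f'$) is the same as the paper's.
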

\begin{proof} Indeed, multiplying \eqref{3.green} by $G_\lambda(x,\xi)$
and integrating it over $x\in\mathcal M$, we have
\begin{equation}\label{4.green}
f_\xi(\lambda)=G_\lambda(\xi,\xi)=\|G_\lambda(\cdot,\xi)\|^2_A+\lambda\|G_\lambda(\cdot,\xi)\|^2_B=h_\xi(\lambda)+\lambda g_\xi(\lambda).
\end{equation}
Differentiating this formula with
respect to $\lambda$ and using \eqref{2.diff}, we get
\begin{multline}\label{4.diff}
f'_\xi(\lambda)=2(\Bbb A(\lambda)G'_\lambda(\cdot,\xi),G_\lambda(\cdot,\xi))+g_\xi(\lambda)=\\=
-2(BG_\lambda(\cdot,\xi),G_\lambda(\cdot,\xi))+g_\xi(\lambda)=-2g_\xi(\lambda)+g_\xi(\lambda)=-g_\xi(\lambda).
\end{multline}
It only remains to note that \eqref{4.diff} and \eqref{4.green} imply \eqref{4.fgfh} and finish the proof of the lemma.
\end{proof}
The next result shows that
the sharp constant in the inequality
\begin{equation}\label{4.ineq1}
|u(\xi)|^2\le K\|u\|_B^{2\theta}\|u\|_A^{2(1-\theta)},\ u\in\bar H^m.
\end{equation}
is expressed in terms of the following
\textit{scalar} maximization problem again involving the Green's function $G_\lambda(\xi,\xi)$.

\begin{theorem}\label{Th4.main1} For a fixed $\xi\in\mathcal{M}$ the
constant $K$  in \eqref{4.ineq1} is given by
\begin{equation}\label{C.sharp}
K=K(\xi):=\frac{1}{\theta^\theta(1-\theta)^{1-\theta}}\cdot
\sup_{\lambda>0}\bigg\{\lambda^{\theta}G_\lambda(\xi,\xi)\bigg\}<\infty,
\end{equation}
where $\theta$ is defined in \eqref{2.int}
and the constant \eqref{C.sharp} is sharp. Furthermore, the
extremal function in~\eqref{4.ineq1} exists if and only if
the supremum in~\eqref{C.sharp} is attained at a finite
point $\lambda_*$.
\end{theorem}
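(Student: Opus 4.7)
The plan is to reduce the sharp constant to a one-variable optimization along the Green's function family $\lambda\mapsto G_\lambda(\cdot,\xi)$ and to deploy weighted AM--GM in both directions. Set $S:=\theta^\theta(1-\theta)^{1-\theta}$ and $M(\xi):=\sup_{\lambda>0}\lambda^\theta G_\lambda(\xi,\xi)$. By~\eqref{2.Vint} together with Theorem~\ref{Th4.main} and~\eqref{4.fgh1}, the best constant in~\eqref{4.ineq1} equals
$$
K(\xi)=\sup_{\lambda\in[-\lambda_0,\infty)}Q(\lambda),\qquad Q(\lambda):=\frac{f_\xi(\lambda)^2}{g_\xi(\lambda)^\theta h_\xi(\lambda)^{1-\theta}},
$$
so it suffices to show $\sup_\lambda Q(\lambda)=M(\xi)/S$.

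For the upper bound, I combine Lemma~\ref{Lem3.est} with the pointwise inequality $G_\lambda(\xi,\xi)\le M(\xi)\lambda^{-\theta}$ to obtain, for every $u\in\bar H^m$ and $\lambda>0$,
$$
|u(\xi)|^2\le M(\xi)\bigl(\lambda^{-\theta}\|u\|_A^2+\lambda^{1-\theta}\|u\|_B^2\bigr),
$$
then minimize over $\lambda>0$ (the minimizer is $\lambda=\theta\|u\|_A^2/((1-\theta)\|u\|_B^2)$) to reach $|u(\xi)|^2\le (M(\xi)/S)\|u\|_A^{2(1-\theta)}\|u\|_B^{2\theta}$, so $K(\xi)\le M(\xi)/S$. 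Finiteness of $M(\xi)$ is obtained by invoking the \emph{a priori} (non-sharp) version of~\eqref{2.int} provided by the Sobolev embedding, applied to $u=G_\lambda(\cdot,\xi)$: the immediate bounds $h_\xi(\lambda)\le G_\lambda(\xi,\xi)$ and $\lambda g_\xi(\lambda)\le G_\lambda(\xi,\xi)$ coming from~\eqref{4.green} yield $G_\lambda(\xi,\xi)^2\le C^2 G_\lambda(\xi,\xi)\lambda^{-\theta}$, i.e.\ $\lambda^\theta G_\lambda(\xi,\xi)\le C^2$.

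For the matching lower bound and extremal analysis, I introduce the dimensionless ratio $r(\lambda):=\lambda g_\xi(\lambda)/f_\xi(\lambda)$, which by~\eqref{4.green} lies in $(0,1)$ for $\lambda>0$. Lemma~\ref{Lem4.relations} gives $h_\xi=(1-r)f_\xi$ and $g_\xi=rf_\xi/\lambda$, so a short algebraic manipulation yields
$$
Q(\lambda)=\frac{\lambda^\theta f_\xi(\lambda)}{r(\lambda)^\theta(1-r(\lambda))^{1-\theta}}\ge \frac{\lambda^\theta G_\lambda(\xi,\xi)}{S},
$$
using that $r\mapsto r^\theta(1-r)^{1-\theta}$ attains its maximum value $S$ uniquely at $r=\theta$. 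Taking $\sup_\lambda$ produces $K(\xi)\ge M(\xi)/S$, hence formula~\eqref{C.sharp}. The same calculation characterizes extremals: differentiating $\phi(\lambda):=\lambda^\theta G_\lambda(\xi,\xi)$ gives $\lambda\phi'(\lambda)/\phi(\lambda)=\theta-r(\lambda)$, so equality $Q(\lambda_*)=M(\xi)/S$ at a finite $\lambda_*>0$ forces both $\phi'(\lambda_*)=0$ and $\phi(\lambda_*)=M(\xi)$, i.e., the supremum in~\eqref{C.sharp} is attained at $\lambda_*$; conversely Theorem~\ref{Th4.main} forces any extremal to be proportional to $G_{\lambda_*}(\cdot,\xi)$ for some finite $\lambda_*$, and the two equality conditions then demand $\phi(\lambda_*)=M(\xi)$. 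The main technical point requiring care is the simultaneous handling of the two equality conditions for extremals, together with the \emph{a priori} verification that $M(\xi)<\infty$.
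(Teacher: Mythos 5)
Your overall strategy is sound and the formula~\eqref{C.sharp} comes out correctly, but there is a genuine difference from the paper's proof together with a gap in the extremal analysis that you should close.

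The upper bound and the finiteness of $M(\xi):=\sup_{\lambda>0}\lambda^\theta G_\lambda(\xi,\xi)$ are essentially the paper's argument, just re-packaged: insert $G_\lambda(\xi,\xi)\le M(\xi)\lambda^{-\theta}$ into Lemma~\ref{Lem3.est} and optimize over $\lambda>0$, or substitute $\lambda=\tfrac{\theta}{1-\theta}\|u\|_A^2/\|u\|_B^2$ directly as in the paper's chain~\eqref{4.long} --- same computation. Where you genuinely diverge is the lower bound. The paper proves sharpness by a case analysis (supremum attained at finite $\lambda_*$ versus as $\lambda\to\infty$, with two sub-cases in the latter). You instead introduce $r(\lambda)=\lambda g_\xi(\lambda)/f_\xi(\lambda)\in(0,1)$ for $\lambda>0$, obtain from $f_\xi=h_\xi+\lambda g_\xi$ the algebraic identity $Q(\lambda)=\lambda^\theta f_\xi(\lambda)/\bigl(r(\lambda)^\theta(1-r(\lambda))^{1-\theta}\bigr)$, and conclude $Q(\lambda)\ge\lambda^\theta G_\lambda(\xi,\xi)/S$ by AM--GM, whence $\sup_\lambda Q\ge M(\xi)/S$. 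This is a clean one-line argument that completely avoids the paper's maximizing-sequence dichotomy. That is a real simplification.

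However, the extremal characterization is under-justified. You assert that $Q(\lambda_*)=M(\xi)/S$ at a finite $\lambda_*>0$ ``forces $\phi'(\lambda_*)=0$ and $\phi(\lambda_*)=M(\xi)$.'' From the identity alone this does not follow: $Q(\lambda_*)=\phi(\lambda_*)/w(\lambda_*)$ with $w:=r^\theta(1-r)^{1-\theta}\le S$ and $\phi\le M(\xi)$ can equal $M(\xi)/S$ without either factor being extremal (e.g., nothing a priori excludes $\phi(\lambda_*)=M(\xi)/2$, $w(\lambda_*)=S/2$). What actually closes this is the first-order condition: differentiating gives
$Q'(\lambda)=\tfrac{\phi(\lambda)(\theta-r(\lambda))}{w(\lambda)}\Bigl(\tfrac1\lambda-\tfrac{r'(\lambda)}{r(\lambda)(1-r(\lambda))}\Bigr)$,
and writing $D(\lambda)=\lambda(1-r)/r$ shows that the second factor equals $\tfrac{r\,D'(\lambda)}{\lambda(1-r)}>0$ by the strict monotonicity $D'(\lambda)>0$ of Lemma~\ref{Lem3.delta}. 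Hence $Q'(\lambda_*)=0$ forces $r(\lambda_*)=\theta$, and only then does $w(\lambda_*)=S$ and $\phi(\lambda_*)=M(\xi)$ follow. You must invoke Lemma~\ref{Lem3.delta} explicitly to rule out the other root of $Q'=0$. By contrast, the paper sidesteps this entirely: for an extremal $u_*$ it \emph{chooses} $\lambda_*=\tfrac{\theta}{1-\theta}\|u_*\|_A^2/\|u_*\|_B^2>0$, rewrites $K(\xi)\|u_*\|_B^{2\theta}\|u_*\|_A^{2(1-\theta)}$ via the elementary identity, and sandwiches $M(\xi)\lambda_*^{-\theta}\bigl(\|u_*\|_A^2+\lambda_*\|u_*\|_B^2\bigr)\ge G_{\lambda_*}(\xi,\xi)\bigl(\|u_*\|_A^2+\lambda_*\|u_*\|_B^2\bigr)\ge|u_*(\xi)|^2$, forcing equality everywhere and in particular $\lambda_*^\theta G_{\lambda_*}(\xi,\xi)=M(\xi)$, with the positivity of $\lambda_*$ automatic. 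Your $r$-identity gives a shorter lower-bound proof; the paper's squeeze gives a shorter extremal proof. Fill the $D'>0$ step and the argument is complete.
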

\begin{proof} We first show that \eqref{C.sharp} is finite. Indeed,
using \eqref{3.GG} and the inequality \eqref{2.int} (with the
non-optimal constant $C$!), we have
\begin{multline*}
\lambda^{\theta} G_\lambda(\xi,\xi)^2\le
\lambda^{\theta}\|G_\lambda(\xi,\cdot)\|_{L^\infty}^2\le
C(\lambda\|G_\lambda(\xi,\cdot)\|^2_B)^{\theta}
(\|G_\lambda(\xi,\cdot)\|_A^2)^{1-\theta}\le\\
\le C(\|G_\lambda(\xi,\cdot)\|_A^2+\lambda\|G_\lambda(\xi,\cdot)\|^2_B)
=C G_\lambda(\xi,\xi)
\end{multline*}
and \eqref{C.sharp} is finite.
\par
Let us check that \eqref{4.ineq1} holds with $K=K(\xi)$. Indeed, let
$u\in\bar H^m$ be arbitrary and let $\lambda:=\frac{\theta}{1-\theta}
\frac{\|u\|_A^2}{\|u\|_B^2}$.
Then, using \eqref{3.main}, we have
\begin{multline}\label{4.long}
|u(\xi)|^2\le G_\lambda(\xi,\xi)\|u\|_B^2\(\frac{\|u\|_A^2}{\|u\|_B^2}+
\lambda\)=\frac1\theta G_\lambda(\xi,\xi)\lambda\|u\|_B^2=
\frac1\theta G_\lambda(\xi,\xi)\lambda^\theta\lambda^{1-\theta}\|u\|_B^2=
\\=\frac1\theta\lambda^\theta G_\lambda(\xi,\xi)
\left(\frac{\theta}{1-\theta}\frac{\|u\|_A^2}{\|u\|_B^2}\right)^{1-\theta}
\|u\|_B^2= \frac{1}{\theta^\theta(1-\theta)^{1-\theta}}\cdot\lambda^{\theta}
G_\lambda(\xi,\xi)\|u\|_B^{2\theta}\|u\|^{2(1-\theta)}_A\le\\
\le \frac{1}{\theta^\theta(1-\theta)^{1-\theta}}\cdot
\sup_{\lambda>0}\bigg\{\lambda^{\theta} G_\lambda(\xi,\xi)\bigg\}\,
\|u\|_B^{2\theta}\|u\|^{2(1-\theta)}_A=
K(\xi)\|u\|_B^{2\theta}\|u\|^{2(1-\theta)}_A.
\end{multline}
Let us check that $K(\xi)$ is sharp. We first assume that the supremum in \eqref{C.sharp} is the maximum which is achieved at $\lambda=\lambda_*$.
Then,
$$
0=\frac{d}{d\lambda}(\lambda^\theta f_\xi(\lambda))|_{\lambda=\lambda_*}=\lambda^{\theta}_+(f'_\xi(\lambda_*)+\theta\lambda^{-1}_+f_\xi(\lambda_*))
$$
and $\lambda f'_\xi(\lambda_*)+\theta f_\xi(\lambda_*)=0$. Therefore,
due to \eqref{4.fgfh},
$$
D(\lambda_*)=\frac{h_\xi(\lambda_*)}{g_\xi(\lambda_*)}=
-\frac{f_\xi(\lambda_*)+\lambda_* f'_\xi(\lambda_*)}{f'_\xi(\lambda_*)}
=\frac{1-\theta}\theta\lambda_*.
$$
Thus, $\lambda_*=\frac{\theta}{1-\theta}
\frac{\|G_{\lambda_*}(\xi,\cdot)\|^2_A}
{\|G_{\lambda_*}(\xi,\cdot)\|^2_B}$ and all inequalities in
\eqref{4.long} become equalities if we take $u(x)=G_{\lambda_*}(\xi,x)$,
 so we have the exact extremal function in that case.
\par
Since $\lambda^\theta G_\lambda(\xi,\xi)\to0$ as $\lambda\to0$, we only need to consider the case when the supremum in \eqref{C.sharp} is achieved as $\lambda\to\infty$. Then, two alternative cases are possible:
\par
1) there are sequence $\{\lambda_k\}_{k=1}^\infty$ of {\it local} maximums such that
$$
\lambda_k^\theta G_{\lambda_k}(\xi,\xi)\to \sup_{\lambda>0}\{\lambda^\theta G_\lambda(\xi,\xi)\}
$$
Since the derivative vanishes at local maximums, then arguing as before, we see that the sequence of conditional extremals $u_n(x):=G_{\lambda_n}(\xi,x)$ does not allow us to take the constant $K$ strictly less than $K(\xi)$ and \eqref{C.sharp} is sharp.
\par
2) The function  $\lambda^\theta G_\lambda(\xi,\xi)$ is eventually
 monotone increasing  as $\lambda\to\infty$. Then the limit
\begin{equation}\label{4.Glim}
G_\infty:=\lim_{\lambda\to\infty}\lambda^\theta G_\lambda(\xi,\xi)
\end{equation}
exists and is strictly positive. Using the fact that the derivative is
integrable we can find sequences $\lambda_k\to\infty$ and $\eb_k\to0$
such that
$$
\frac {d}{d\lambda}(\lambda^\theta f_\xi(\lambda))|_{\lambda=\lambda_k}=\eb_k\lambda_k^{-1}.
$$
This, together with \eqref{4.Glim} gives
$$
\lambda_k^\theta f_\xi(\lambda_k)= G_\infty +o_{\lambda\to\infty}(1),
\ \ \lambda_k^{1+\theta}f'_\xi(\lambda_k)=
-\theta G_\infty+o_{\lambda\to\infty}(1).
$$
Therefore,
\begin{equation}\label{4.Dk}
\frac{D(\lambda_k)}{\lambda_k}=\frac{1-\theta}\theta+o_{\lambda\to\infty}(1).
\end{equation}
Finally, taking $u_k(x):=G_{\lambda_k}(\xi,x)$,
after  straightforward transformations we see that
\begin{equation}
\frac{|u_k(\xi)|^2}{\|u_k\|^{2\theta}_B\|u_k\|_A^{2(1-\theta)}}=
\(\lambda_k^\theta G_{\lambda_k}\)\cdot
\(\frac{\lambda_k}{D(\lambda_k)}\)^{1-\theta}
\cdot\(1+\frac{D(\lambda_k)}{\lambda_k}\)
\end{equation}
Passing to the limit $k\to\infty$ and using \eqref{4.Dk}, we obtain exactly
$K(\xi)$ in the right-hand side and verify that $K(\xi)$ is sharp in the
second case as well.

To complete the proof it remains to show that if there
exists an extremal function $u_*$ in~\eqref{4.ineq1},
\eqref{C.sharp}, then the supremum with respect to $\lambda$
in \eqref{C.sharp} is attained at a finite point.

Using the elementary identity for positive $a$, $b$
$$
\lambda_*^\theta a^{2(1-\theta)}b^{2\theta}=
\theta^\theta(1-\theta)^{(1-\theta)}\left(a^2+\lambda_* b^2\right),
\qquad\lambda_*=\frac\theta{1-\theta}\frac{a^2}{b^2},
$$
we have for the extremal function $u_*$ the equality
$$
\aligned
u_*(\xi)=K(\xi)\|u\|_B^{2\theta}\|u\|_A^{2(1-\theta)}=
K(\xi)\frac{\theta^\theta(1-\theta)^{(1-\theta)}}{\lambda_*^\theta}\left(
\|u_*\|_A^2+\lambda_*\|u_*\|^2_B\right)=\\=
\sup_{\lambda>0}\bigg\{\lambda^{\theta}G_\lambda(\xi,\xi)\bigg\}
\frac1{\lambda_*^\theta}
\left(\|u_*\|_A^2+\lambda_*\|u_*\|^2_B\right),\qquad\text{where}\quad
\lambda_*=\frac\theta{1-\theta}\frac{\|u_*\|_A^2}{\|u_*\|_B^2}.
\endaligned
$$
Lemma~\ref{Lem3.est} now gives that necessarily
$u_*(x)=G_{\lambda_*}(x,\xi)$ and
$$
\sup_{\lambda>0}\bigg\{\lambda^{\theta}G_\lambda(\xi,\xi)\bigg\}
\frac1{\lambda_*^\theta}=G_{\lambda_*}(\xi,\xi),\quad
\text{or}\quad
\sup_{\lambda>0}\bigg\{\lambda^{\theta}G_\lambda(\xi,\xi)\bigg\}=
\lambda_*^{\theta}G_{\lambda_*}(\xi,\xi).
$$
The proof is complete.
\end{proof}

\subsection{Asymptotic expansions for big $\lambda$}\label{ss4.5}
In this subsection, we derive some useful formulas for the function
$\Bbb V(\xi,D)$ when $D\to\infty$. To this end, we need to know the
asymptotic behavior of the Green's function
$f_\xi(\lambda)=G_\lambda(\xi,\xi)$. It is not difficult to see using
the localization and frozen coefficients technique that the limit
\eqref{4.Glim} exists and is strictly positive, so the leading term in
the asymptotic expansions of $G_\lambda$ is known. However, the further
terms in the asymptotic expansion seems problem dependent and we do not
know the general formulas for them. By this reason, we just assume that
\begin{equation}\label{6.expand}
f_\xi(\lambda)=\lambda^{-1}\(g_1 \lambda^{1-\theta}+g_2+g_3\lambda^{\theta-1}+o(\lambda^{\theta-1})\),
\end{equation}
where $g_1=g_1(\xi)>0$ and $g_2=g_2(\xi)$, $g_3=g_3(\xi)$ are some given
numbers, and that we are able to differentiate the expansions
\eqref{6.expand} with respect to $\lambda$. This assumption will be
satisfied in most part of our applications. Then, the following result holds.
\begin{proposition}\label{Prop6.as} Let the above assumptions hold and let,
 in addition, the asymptotic expansions \eqref{6.expand} be true. Then
the following asymptotic expansion holds as  $D\to\infty$:
\begin{equation}\label{6.mist}
\Bbb V(\xi,D)=g_1 S D^{1-\theta}+g_2\frac1\theta-
\frac12 S^{-1}\frac{g_2^2(1-\theta)-
2\theta g_1g_3}{\theta^3g_1} D^{\theta-1}+o(D^{\theta-1}) ,\ \
S:=\frac1{\theta^\theta(1-\theta)^{1-\theta}}.
\end{equation}
\end{proposition}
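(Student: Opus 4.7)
The plan is to exploit the parametric representation afforded by Lemma~\ref{Lem4.relations}: setting $P(\lambda) := \lambda + D(\lambda) = -f_\xi(\lambda)/f_\xi'(\lambda)$, the identities $g_\xi = -f_\xi'$ and $h_\xi = f_\xi + \lambda f_\xi'$ give
$$
\Bbb V(\xi,\lambda) = \frac{f_\xi(\lambda)^2}{g_\xi(\lambda)} = f_\xi(\lambda)\,P(\lambda),\qquad D(\lambda) = P(\lambda) - \lambda.
$$
Thus the problem reduces to three tasks: expand $\Bbb V(\xi,\lambda)$ and $D(\lambda)$ asymptotically as $\lambda\to\infty$, invert $\lambda\mapsto D(\lambda)$, and substitute.

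First I would differentiate~\eqref{6.expand} termwise (legitimate by hypothesis) to get
$$
g_\xi(\lambda) = \theta g_1\lambda^{-\theta-1} + g_2\lambda^{-2} + (2-\theta)g_3\lambda^{\theta-3} + o(\lambda^{\theta-3}),
$$
and then compute $P(\lambda)=-f_\xi/f_\xi'$ as a quotient of asymptotic series, expanding $1/(1+x)$ as a geometric series. This yields
$$
P(\lambda) = \frac{\lambda}{\theta} - \frac{(1-\theta)g_2}{\theta^2 g_1}\lambda^\theta + \frac{(1-\theta)(g_2^2 - 2\theta g_1 g_3)}{\theta^3 g_1^2}\lambda^{2\theta-1} + o(\lambda^{2\theta-1}),
$$
whence $D(\lambda) = \tfrac{1-\theta}{\theta}\lambda + O(\lambda^\theta)$. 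Multiplying the expansions of $f_\xi$ and $P$ term by term gives a three-term expansion of $\Bbb V(\xi,\lambda)$ in the scale $\lambda^{1-\theta},\lambda^0,\lambda^{\theta-1}$.

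Next I would invert $D = D(\lambda)$ via the ansatz
$$
\lambda = \frac{\theta}{1-\theta}D\bigl(1 + y_1 D^{\theta-1} + y_2 D^{2\theta-2} + o(D^{2\theta-2})\bigr),
$$
determining $y_1$ from matching at order $D^\theta$ and $y_2$ at order $D^{2\theta-1}$ in the identity $D\equiv D(\lambda(D))$. Substituting this back into the expansion of $\Bbb V(\xi,\lambda)$, and expanding $\lambda^{1-\theta}$ by the binomial series up to second order in $D^{\theta-1}$, converts powers of $\lambda$ to powers of $D$. After grouping, the leading $\lambda^{1-\theta}$ term contributes $g_1 S D^{1-\theta}$ (using $\tfrac{1}{\theta}(\tfrac{\theta}{1-\theta})^{1-\theta}=S$), the constant-order contributions combine to $g_2/\theta$, and the $D^{\theta-1}$ contributions combine to the advertised coefficient.

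The principal obstacle will be bookkeeping at the $D^{\theta-1}$ level. The raw $\lambda$-coefficients $(2\theta-1)g_2/\theta^2$ at $\lambda^0$ and $\tfrac{(1-\theta)^2 g_2^2}{\theta^3 g_1} + \tfrac{(3\theta-2)g_3}{\theta^2}$ at $\lambda^{\theta-1}$ do not by themselves match the target values $g_2/\theta$ and $-\tfrac{1}{2}S^{-1}\tfrac{g_2^2(1-\theta)-2\theta g_1 g_3}{\theta^3 g_1}$; the discrepancies are precisely cancelled by the contributions coming from $y_1$, from $y_2$, and from the quadratic binomial correction to $(\,\cdot\,)^{1-\theta}$. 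A convenient consistency check is the case $\theta=1/2$, $g_1=g_2=g_3=1$, in which the $\lambda^{\theta-1}$ coefficient of $\Bbb V(\xi,\lambda)$ vanishes identically and $D(\lambda)=\lambda-2\sqrt{\lambda}$ can be inverted explicitly to recover the formula. Once this cancellation is tracked, the remaining algebra is routine manipulation of powers of $\theta$ and $1-\theta$ against $S=\theta^{-\theta}(1-\theta)^{\theta-1}$.
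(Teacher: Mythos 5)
Your proposal is correct and follows essentially the same route as the paper: expand $f_\xi$, $g_\xi$, $h_\xi$ (equivalently $P=-f_\xi/f_\xi'$), invert $D(\lambda)$ via a three-term power ansatz, and substitute back, tracking cancellations at the $D^{\theta-1}$ level. The only cosmetic difference is that you organize the final substitution as $\Bbb V = f_\xi\,P$ with $P=\lambda+D$, whereas the paper writes $\Bbb V = D\cdot f_\xi^2/h_\xi$ and expands $f_\xi^2/h_\xi$; these are algebraically identical and the bookkeeping is the same, so your intermediate coefficients and $\theta=1/2$ consistency check all confirm the computation.
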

\begin{proof}
The proof of this proposition is a straightforward
(although rather technical) computation.
\ Setting for brevity $g_1=a$, $g_2=b$, $g_3=c$ we have
$$
\aligned
&f(\lambda)=a\lambda^{-\theta}+b\lambda^{-1}+c\lambda^{-(2-\theta)}
+o(\lambda^{-(2-\theta)}),\\
&g(\lambda)=a\theta\lambda^{-(1+\theta)}+b\lambda^{-2}+c(2-\theta)
\lambda^{-(3-\theta)}+o(\lambda^{-(3-\theta)}),\\
&h(\lambda)=f(\lambda)-\lambda g(\lambda)=
a(1-\theta)\lambda^{-\theta}-c(1-\theta)\lambda^{-(2-\theta)}+
o(\lambda^{-(2-\theta)}).
\endaligned
$$
Next, we find the asymptotics, as $D\to\infty$, of the
(unique) solution $\lambda=\lambda(D)$ of the first
equation in~\eqref{4.fgh1}:
$$
\aligned
D(\lambda)=\frac{h(\lambda)}{g(\lambda)}=
\frac{\lambda(a(1-\theta)-c(1-\theta)\lambda^{-(2-2\theta)}+\dots)}
{a\theta+b\lambda^{-(1-\theta)}+c(2-\theta)\lambda^{-(2-2\theta)}+\dots}=\\
\frac{1-\theta}\theta\left(\lambda-\frac
b{a\theta}\lambda^\theta+
\frac{b^2-2\theta
ac}{a^2\theta^2}\lambda^{-1+2\theta}+\dots\right),
\endaligned
$$
or
\begin{equation}\label{D(lambda)}
\lambda-A\lambda^\theta+B\lambda^{-1+2\theta}+\dots=\delta:=
D\frac\theta{1-\theta},
\qquad
A=\frac b{a\theta},\ \ B=\frac{b^2-2\theta ac}{a^2\theta^2}.
\end{equation}
The unique large solution of this equation
has the asymptotics as $\delta\to\infty$
$$
\lambda(\delta)=\delta+A\delta^\theta+C\delta^{-1+2\theta}+\dots,
$$
where we find $C$ by substituting the last expression
into~\eqref{D(lambda)}, which gives
$$
C=\theta
A^2-B=\frac{-b^2(1-\theta)+2ac\theta}{a^2\theta^2},
$$
or, finally,
\begin{equation}\label{lambda(D)}
\lambda(D)=rD+sD^\theta+tD^{2\theta-1}+\dots\,,
\end{equation}
where
$$
r=\frac\theta{1-\theta},\quad
s=\frac ba\frac1{\theta^{1-\theta}(1-\theta)^\theta},\quad
t=\frac{2ac\theta-(1-\theta)b^2}
{a^2\theta^{3-2\theta}(1-\theta)^{2\theta-1}}\,.
$$
It remains to substitute~\eqref{lambda(D)} into
$\mathbb{V}=f^2/g=D\cdot f^2/h$, for which we have the expansion
\begin{equation}\label{f^2/h}
\frac{f(\lambda)^2}{h(\lambda)}=\frac
a{1-\theta}\lambda^{-\theta}+\frac{2b}{1-\theta}\lambda^{-1}+
\frac{b^2+3ac}{(1-\theta)a}\lambda^{\theta-2}+\dots\,.
\end{equation}
For each power we obtain from~\eqref{lambda(D)}, respectively,
$$
\aligned
&\lambda(D)^{-\theta}=\frac1{r^\theta}D^{-\theta}-
\frac{\theta s}{r^{1+\theta}}D^{-1}+
\frac{\theta(\theta+1)s^2-2\theta
tr}{2r^{2+\theta}}D^{\theta-2}+\dots\,,\\
&\lambda(D)^{-1}=\frac 1rD^{-1}-\frac
s{r^2}D^{\theta-2}+\dots\,,\\
&\lambda(D)^{\theta-2}=\frac1{r^{2-\theta}}D^{\theta-2}+\dots\,.
\endaligned
$$
Substituting this into~\eqref{f^2/h}, multiplying by $D$,
we obtain after quite a few miraculous cancellations
the asymptotic expansion~\eqref{6.mist}.
\end{proof}

\begin{remark}\label{Rem6.good} We see that, in particular,
if for some $\xi\in\mathcal M$ the third term in\eqref{6.mist} is negative
(this is always the case when $g_3\le0$), then
\begin{equation}\label{6.big}
\Bbb V(\xi,D)<g_1 SD^{1-\theta}+\frac{g_2}\theta,
\end{equation}
for large $D$. Then, using the numerics which is reliable for relatively
small $D$, we will show that, in some cases, inequality \eqref{6.big}
holds for {\it all} values of $D$.
This will give us the improved version of \eqref{2.int}:
\begin{equation}
|u(\xi)|^2\le g_1 S\|u\|_B^\theta\|u\|_A^{1-\theta}+\frac{g_2}\theta\|u\|^2_B
\end{equation}
with best possible constants.

We also mention an interesting fact that
$$
D'(\lambda)=\(\frac{f_\xi(\lambda)+\lambda f'_\xi(\lambda)}{-f'_\xi(\lambda)}\)'=\frac{2(f_\xi'(\lambda))^2-f''_\xi(\lambda)f(\lambda)}{(f'_\lambda(\xi))^2}
$$
and, therefore, the positivity of $D'(\lambda)$ proved in
Lemma~\ref{Lem3.delta} is equivalent to the strict convexity of the
function $\lambda\to\frac1{f_\xi(\lambda)}$.
\end{remark}
\subsection{Variational characterisation of $\Bbb V(\xi,\lambda)$} In
this subsection, we give a simple, but very useful description of
$\Bbb V(\xi,\lambda)$ in terms of the Green's function
$G_\lambda(\xi,\xi)$ which does not involve the derivatives in
$\lambda$ and which allows us in many cases to prove
inequality \eqref{6.big} {\it analytically} for all admissible values
of $D$. Namely, the following theorem holds.
\begin{theorem}\label{Th.var} Let the assumptions of Theorem
\ref{Th4.main} hold. Then, the function $\Bbb V(\xi,D)$ defined in
\eqref{4.an} can be expressed as follows:
\begin{equation}\label{green.var}
\Bbb V(\xi,D)=\inf_{\lambda\in[-\lambda_0,\infty)}\{(\lambda+D)G_\lambda(\xi,\xi)\},
\end{equation}
where $D\in[\lambda_0,\infty)$ and the Green's function
$G_\lambda(\xi,\xi)$ is defined in \eqref{3.green}.
\end{theorem}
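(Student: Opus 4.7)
The plan is to deduce the identity from Lemma~\ref{Lem3.est} and Theorem~\ref{Th4.main}, viewing the right-hand side as a natural Lagrangian dual of the constrained maximization problem \eqref{2.max}. The one inequality is immediate and the reverse is witnessed by the extremal $\lambda=\lambda(D)$ already constructed in Lemma~\ref{Lem3.delta}.

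First I would prove the upper bound $\Bbb V(\xi,D)\le\inf_{\lambda\ge-\lambda_0}(\lambda+D)G_\lambda(\xi,\xi)$. Fix any $\lambda>-\lambda_0$ and any admissible $u\in\bar H^m$ with $\|u\|_B^2=1$ and $\|u\|_A^2=D$. By Lemma~\ref{Lem3.est},
$$
|u(\xi)|^2\le G_\lambda(\xi,\xi)\bigl(\|u\|_A^2+\lambda\|u\|_B^2\bigr)=(\lambda+D)G_\lambda(\xi,\xi).
$$
Taking the supremum over such $u$ gives $\Bbb V(\xi,D)\le(\lambda+D)G_\lambda(\xi,\xi)$, and then the infimum over $\lambda$ on the right-hand side is exactly what we want. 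The boundary value $\lambda=-\lambda_0$ is incorporated by a one-sided limit, which is finite (or $+\infty$) by the splitting \eqref{3.split} for $G_\lambda$ near $-\lambda_0$.

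For the matching lower bound I would exhibit a value of $\lambda$ at which equality is attained, namely $\lambda=\lambda(D)$ from Lemma~\ref{Lem3.delta}. By Theorem~\ref{Th4.main}, the supremum in \eqref{2.max} is achieved at $u_{D,\xi}(x)=G_{\lambda(D)}(x,\xi)/\|G_{\lambda(D)}(\cdot,\xi)\|_B$, which is a scalar multiple of $G_{\lambda(D)}(\cdot,\xi)$. This is precisely the equality case in Lemma~\ref{Lem3.est} with $\lambda=\lambda(D)$, so
$$
\Bbb V(\xi,D)=|u_{D,\xi}(\xi)|^2=G_{\lambda(D)}(\xi,\xi)\bigl(\|u_{D,\xi}\|_A^2+\lambda(D)\|u_{D,\xi}\|_B^2\bigr)=(\lambda(D)+D)G_{\lambda(D)}(\xi,\xi),
$$
since $\|u_{D,\xi}\|_B^2=1$ and, by the defining equation \eqref{3.D} for $\lambda(D)$, $\|u_{D,\xi}\|_A^2=D$. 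Combined with the upper bound this shows the infimum in \eqref{green.var} is attained at $\lambda=\lambda(D)$ and equals $\Bbb V(\xi,D)$.

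There is really no serious obstacle: the content of the theorem is exactly the strong duality between the primal problem \eqref{2.max} and its Lagrangian relaxation, which works here because Lemma~\ref{Lem3.est} gives both the inequality and a sharp characterization of its equality case. The only mild technical point is to cover the endpoint $D=\lambda_0$ (where the minimizer $\lambda(D)$ approaches $-\lambda_0$), which is handled by the explicit expansion \eqref{3.split}: the factor $(\lambda+D)=(\lambda+\lambda_0)$ cancels the simple pole of $G_\lambda(\xi,\xi)$ and the limit reproduces the value of $\Bbb V(\xi,\lambda_0)$ coming from the lowest eigenspace of $B^{-1/2}AB^{-1/2}$.
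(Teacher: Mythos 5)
Your proof is correct and somewhat more economical than the one in the paper. The paper's argument is calculus-based: it first shows that $\lambda\mapsto(\lambda+D)G_\lambda(\xi,\xi)$ blows up at both ends of $[-\lambda_0,\infty)$, so the infimum is attained in the interior; it then differentiates, identifies the first-order condition with the equation $D=h_\xi(\lambda)/g_\xi(\lambda)$ from \eqref{3.D} (via the relations \eqref{4.fgfh}), and concludes that the unique critical point is $\lambda(D)$, whose value it then checks equals $f_\xi^2/g_\xi=\Bbb V$. Your proof instead treats \eqref{green.var} as the Lagrangian dual of the constrained problem \eqref{2.max}: the upper bound $\Bbb V(\xi,D)\le(\lambda+D)G_\lambda(\xi,\xi)$ falls out of Lemma~\ref{Lem3.est} applied to each admissible $u$, and the matching lower bound is exhibited at $\lambda=\lambda(D)$ because the extremal $u_{D,\xi}$ from Theorem~\ref{Th4.main} is a scalar multiple of $G_{\lambda(D)}(\cdot,\xi)$, which is precisely the equality case of that lemma. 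This bypasses the differentiation and critical-point analysis entirely — no need to argue about where the infimum sits before you know what it equals — and makes the strong-duality structure explicit, though of course it relies on the same underlying ingredients (Lemma~\ref{Lem3.est}, Lemma~\ref{Lem3.delta}, Theorem~\ref{Th4.main}). Both routes end by identifying the minimizer as $\lambda(D)$; yours just gets there without touching derivatives in $\lambda$.
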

\begin{proof} We first note that, due to the results obtained above the
function $\lambda\to(\lambda+D)G_\lambda(\xi,\xi)$ tends to $+\infty$
when
$\lambda\to\infty$ or $\lambda\to-\lambda_0$ for every
$D\in(\lambda_0,\infty)$. Thus, the infimum in \eqref{green.var} is
achieved at some point $\lambda_0=\lambda_0(D)$ inside  the interval.
Obviously, this $\lambda_0$ solves the equation
$$
G_\lambda(\xi,\xi)+(D+\lambda)\frac d{d\lambda}G_\lambda(\xi,\xi)=0,\
\text{or}\ \ D=-\frac{\lambda\frac d{d\lambda}G_\lambda(\xi,\xi)+
G_\lambda(\xi,\xi)}{\frac d{d\lambda}G_\lambda(\xi,\xi)}=
\frac{h_\xi(\lambda)}{g_\xi(\lambda)},
$$
see \eqref{4.fgh}, \eqref{4.fgh1} and \eqref{4.fgfh}. Thus, the equation on
the minimal value $\lambda_0(D)$ coincides with equation \eqref{3.D}
for $\lambda(D)$ which has a unique solution and, therefore, the minimum
in \eqref{green.var} is achieved exactly at $\lambda_0=\lambda(D)$. It
remains to check that the value at this point is exactly
$\Bbb V(\xi,\lambda)$. To this end, we note that at the extremal point
the numbers $D$ and $\lambda$ satisfy the first equation of
\eqref{4.fgh1} and, at that point
$$
(\lambda+D)G_\lambda(\xi,\xi)=\frac{f_\lambda(\xi)(h_\xi(\lambda)+
\lambda g_\xi(\lambda))}{g_\xi(\lambda)}=
\frac{f_\xi(\lambda)^2}{g_\xi(\lambda)}=\Bbb V(\xi,D),
$$
where we have used~\eqref{4.fgh1}. Theorem \ref{Th.var} is proved.
\end{proof}
\subsection{Generalizations}\label{ss5} Here we briefly discuss several
possibilities to relax the assumptions on the manifold $\mathcal M$ and
operators $A$ and $B$.
\par
{\it I)} The operators $A$ and $B$ should not necessarily  be elliptic
{\it differential} operators. All the theory works word for word if we
assume that $A$ and $B$ are elliptic {\it pseudo-differential} operators
(for instance, $A=(-\Delta)^m$ and $B=(-\Delta)^l$ where $\Delta$ is the
Laplace-Beltrami operator on $\mathcal M$) and, in particular, the
numbers $m$ and $l$ may be not integers. This allow us to study the
interpolation inequalities in fractional Sobolev spaces.
\par
{\it II)} The theory can be naturally extended to the case where the
manifold $\mathcal M$ has a boundary. In this case, we need to assume
that the elliptic operators $A$ and $B$ are endowed by the proper
{\it boundary conditions} and these boundary conditions are chosen in
such way that
\begin{equation}\label{5.domain}
\mathcal D(A)\subset\mathcal D(B),
\end{equation}
where $\mathcal D(A)$ and $\mathcal D(B)$ are the domains of the operators
$A$ and $B$. It is not difficult to see that under this extra
assumption(s) the above developed theory remains true for manifolds
with boundary as well.
\par
{\it III)} The spaces $\bar H^m$ may be further restricted, for instance,
we may consider not all functions, say, in a disk,  but only radially
symmetric ones. If the operators $A$ and $B$ are also radially symmetric, all the theory works in this case as well.
\par
{\it IV)} The above theory works in many cases where the manifold
$\mathcal M$ is {\it not compact}. The only problem here is that,
unlike the compact case, the Green's functions $G_\lambda(x,\xi)$
may have bad behavior as $x,\xi\to\infty$ and as a result, the integrals
used above may not have sense. So, in general theory, one should be
accurate with the extra assumptions on the non-compact part of
$\mathcal M$ and operators $A$ and $B$ as well as with the possible
continuous spectrum of $A$ and $B$. However, in all our ``non-compact"
applications, these questions will be obvious and transparent, so in
order to avoid the technicalities, we do not present here
any ``general theory" for the non-compact case.

\section{Part II. Examples and applications} \label{s2s1}

\subsection{The case of $\mathbb{R}^n$}\label{Rn}
To  illustrate our method we consider the
simplest case when $M=\mathbb{R}^n$, and let
$l$ and   $m$ satisfy
$-\infty<l<n/2<m<\infty$, so that $0<\theta=\frac{2m-n}{2(m-l)}<1$.

\begin{theorem}\label{T:R^n}
The following inequality holds
\begin{equation}\label{R^n}
\|u\|_\infty^2\le c_{\mathbb{R}^n}(l,m)
\|(-\Delta)^{l/2}u\|^{2\theta}\|(-\Delta)^{m/2}u\|^{2(1-\theta)},
\end{equation}
where the sharp constant $c_{\mathbb{R}^n}(l,m)$ is
\begin{equation}\label{cR^n}
c_{\mathbb{R}^n}(l,m)=
(2\pi)^{-n}\int_{\mathbb{R}^n}\frac{d\xi}{\theta|\xi|^{2l}+
(1-\theta)|\xi|^{2m}}=
\frac{\sigma(n)\pi}
{(2\pi)^n2(m-l){\theta}^{\theta}(1-\theta)^{1-\theta}
\sin\pi{\theta}}\,.
\end{equation}
\end{theorem}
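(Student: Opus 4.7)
The plan is to apply Theorem~\ref{Th4.main1} with $A=(-\Delta)^m$ and $B=(-\Delta)^l$, computing the Green's function of $\mathbb{A}(\lambda)=(-\Delta)^m+\lambda(-\Delta)^l$ in closed form via the Fourier transform. By translation invariance $G_\lambda(\xi,\xi)$ is independent of $\xi$, so I fix $\xi=0$. Passing to Fourier variables the equation $\mathbb{A}(\lambda)G_\lambda(\cdot,0)=\delta_0$ becomes algebraic, yielding $\widehat{G_\lambda}(\eta)=(|\eta|^{2m}+\lambda|\eta|^{2l})^{-1}$ and hence
\[
G_\lambda(0,0)=\frac{1}{(2\pi)^n}\int_{\mathbb{R}^n}\frac{d\eta}{|\eta|^{2m}+\lambda|\eta|^{2l}},
\]
where the integral converges precisely under the standing hypothesis $l<n/2<m$ (the condition $2m>n$ controls decay at infinity and $2l<n$ integrability at the origin).

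Next I evaluate $\sup_{\lambda>0}\lambda^{\theta}G_\lambda(0,0)$. The substitution $\eta=\lambda^{1/(2(m-l))}\omega$ yields $|\eta|^{2m}+\lambda|\eta|^{2l}=\lambda^{m/(m-l)}(|\omega|^{2m}+|\omega|^{2l})$ and $d\eta=\lambda^{n/(2(m-l))}\,d\omega$; using $\theta=(2m-n)/(2(m-l))$ all powers of $\lambda$ cancel, so
\[
\lambda^{\theta}G_\lambda(0,0)=\frac{1}{(2\pi)^n}\int_{\mathbb{R}^n}\frac{d\omega}{|\omega|^{2m}+|\omega|^{2l}}
\]
is independent of $\lambda$. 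Hence the supremum in~\eqref{C.sharp} is attained at every $\lambda>0$ and \eqref{C.sharp} delivers $c_{\mathbb{R}^n}(l,m)$. To put the integral in closed form I pass to polar coordinates and substitute $t=r^{2(m-l)}$, reducing matters to the classical identity $\int_0^\infty t^{a-1}/(t+1)\,dt=\pi/\sin\pi a$ with $a=(n-2l)/(2(m-l))=1-\theta$; combined with $\sin\pi(1-\theta)=\sin\pi\theta$ this produces the second equality in~\eqref{cR^n}. For the first representation in~\eqref{cR^n}, the rescaling $\xi=(\theta/(1-\theta))^{1/(2(m-l))}\omega$ applied to $\int d\xi/(\theta|\xi|^{2l}+(1-\theta)|\xi|^{2m})$ peels off the factor $\theta^{-\theta}(1-\theta)^{\theta-1}$ times the same base integral, absorbing the $\theta^\theta(1-\theta)^{1-\theta}$ from~\eqref{C.sharp} into the denominator and matching the two sides.

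The main conceptual point requiring care is that Part~I is developed on a compact manifold with finite-dimensional $\ker A$, whereas on $\mathbb{R}^n$ the operator $A$ has purely continuous spectrum down to $0$ with no eigenfunctions; this is the situation flagged in item~IV of Section~\ref{ss5}. However, the explicit $G_\lambda$ just constructed lies in $L^2(\mathbb{R}^n)$ with finite $\|G_\lambda\|_A$ and $\|G_\lambda\|_B$, and the reproducing identity $u(0)=(G_\lambda(\cdot,0),\mathbb{A}(\lambda)u)_{L^2}$ holds for Schwartz $u$ and then by density on the relevant completion, so the Cauchy--Schwarz step of Lemma~\ref{Lem3.est} and the optimization of Theorem~\ref{Th4.main1} transfer verbatim; this is the only place a short preliminary justification is needed before the Fourier computation above.
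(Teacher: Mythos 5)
Your proof is correct but takes a genuinely different route than the paper's. The paper gives a direct, self-contained Fourier argument: it writes $\|u\|_\infty^2 \le (2\pi)^{-n}\bigl(\int|\widehat u|\,d\xi\bigr)^2$, applies Cauchy--Schwarz against the weight $\lambda\theta|\xi|^{2l}+(1-\theta)|\xi|^{2m}$, chooses $\lambda=\lambda_*=\|(-\Delta)^{m/2}u\|^2/\|(-\Delta)^{l/2}u\|^2$ to collapse the right-hand side to the product of norms, and verifies sharpness by substituting the explicit extremizer $u_\lambda=\mathcal F^{-1}\bigl((\lambda\theta|\xi|^{2l}+(1-\theta)|\xi|^{2m})^{-1}\bigr)$. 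You instead invoke the abstract Theorem~\ref{Th4.main1}, which the paper only mentions as a remark (Remark~\ref{R:G_lambda}: ``Of course, we get the same result by applying Theorem~\ref{Th4.main}''). Your computations are all correct: the scaling argument showing $\lambda^\theta G_\lambda(0,0)$ is independent of $\lambda$, the reduction to $\int_0^\infty t^{-\theta}/(t+1)\,dt=\pi/\sin\pi\theta$, and the rescaling $\xi=(\theta/(1-\theta))^{1/(2(m-l))}\omega$ giving the first representation in~\eqref{cR^n}. What the paper's direct approach buys is a completely self-contained proof with no compactness caveat; what yours buys is the connection to the abstract scheme and to the observation that extremizers exist precisely because $\lambda^\theta G_\lambda(0,0)$ is constant (so the supremum is attained at every finite $\lambda$). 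You are right to flag the non-compact setting (item IV of Section~\ref{ss5}) as the point needing care.

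One small slip: you assert that $G_\lambda(\cdot,0)\in L^2(\mathbb R^n)$. That is false whenever $n/4\le l<n/2$: near $\eta=0$ one has $\widehat{G_\lambda}(\eta)\sim \lambda^{-1}|\eta|^{-2l}$, so $\int|\widehat{G_\lambda}|^2\,d\eta$ diverges there unless $4l<n$. This does not damage the argument, because the Cauchy--Schwarz step of Lemma~\ref{Lem3.est} uses only the $\mathbb A(\lambda)$-inner product, i.e.\ the finiteness of $\|G_\lambda\|_A^2$ and $\|G_\lambda\|_B^2$, which you correctly verified; the $L^2$-claim is simply not needed and should be dropped. With that adjustment the proof is sound and amounts to the argument the authors outline in Remark~\ref{R:G_lambda} rather than the one they actually carry out.
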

\begin{proof}
Using the Fourier transform
$
\widehat{f}(\xi)=\mathcal{F}(f)(\xi)=
(2\pi)^{-n/2}\int_{\mathbb{R}^n} e^{-i\xi x}f(x)dx,
$
for $\lambda>0$
\begin{equation}\label{Fourier}
\aligned
\|u\|_\infty^2\le(2\pi)^{-n}\left(\int_{\mathbb{R}^n}
|\widehat{u}(\xi)|d\xi\right)^2\le
\\
(2\pi)^{-n}\int_{\mathbb{R}^n}
|\widehat{u}(\xi)|^2\left(\lambda\theta|\xi|^{2l}+(1-\theta)|\xi|^{2m}\right)
d\xi
\int_{\mathbb{R}^n}\frac{d\xi}{\lambda\theta|\xi|^{2l}+(1-\theta)|\xi|^{2m}}=
\\
(2\pi)^{-n}\int_{\mathbb{R}^n}\frac{d\xi}{\lambda\theta|\xi|^{2l}+
(1-\theta)|\xi|^{2m}}
\cdot\left(\lambda\theta\|(-\Delta)^{l/2}u\|^2+
(1-\theta)\|(-\Delta)^{m/2}u\|^2\right).
\endaligned
\end{equation}
Next, setting $\lambda=\lambda_*=\|(-\Delta)^{m/2}u\|^2/\|(-\Delta)^{l/2}u\|^2$
we have
\begin{equation}\label{lam*Rn}
\lambda_*\theta\|(-\Delta)^{l/2}u\|^2+(1-\theta)\|(-\Delta)^{m/2}u\|^2=
\lambda_*^{\theta}\|(-\Delta)^{l/2}u\|^{2\theta}
\|(-\Delta)^{m/2}u\|^{2(1-\theta)},
\end{equation}
which gives
$$
\|u\|_\infty^2\le(2\pi)^{-n}
\left[\lambda_*^{\theta}
\int_{\mathbb{R}^n}\frac{d\xi}{\lambda_*\theta|\xi|^{2l}+(1-\theta)|\xi|^{2m}}\right]
\|(-\Delta)^{l/2}u\|^{2\theta}\|(-\Delta)^{m/2}u\|^{2(1-\theta)}.
$$
The expression in brackets is, in fact, independent of
$\lambda_*$, which gives~(\ref{cR^n}).
 The fact that the constant is sharp can be verified by
 substituting
\begin{equation}\label{extr-Rn}
u_\lambda(x)=\mathcal{F}^{-1}\left((\lambda\theta_1|\xi|^{2l}
+\theta_2|\xi|^{2m})^{-1}\right),
\end{equation}
and calculating the corresponding integrals. A simpler way,
however, is to observe first that for
$u_\lambda(x)$ all inequalities in (\ref{Fourier}) become
equalities. Next, differentiating the expression in  brackets
with respect to  $\lambda$, one can see that
$
\lambda=\|(-\Delta)^{l/2}u_\lambda\|^2/\|(-\Delta)^{m/2}u_\lambda\|^2
$,
which proves that~(\ref{R^n}) for $u=u_\lambda$
becomes an equality.
\end{proof}
\begin{remark}\label{R:G_lambda}
{\rm
Of course, we get the same result by applying
Theorem~\ref{Th4.main}. Here
$$
\mathbb{A}(\lambda)=(-\Delta)^m+\lambda(-\Delta)^l,\qquad
G_\lambda(x,\xi)=
\frac1{(2\pi)^n}
\int_{\mathbb{R}^n}\frac{e^{i\eta(x-\xi)}\,d\eta}
{|\eta|^{2m}+\lambda|\eta|^{2l}}
$$
and
$$
G_\lambda(\xi,\xi)=\lambda^{-\theta}
\frac{\sigma(n)\pi}
{(2\pi)^n2(m-l)\sin\pi\theta}\,.
$$

}
\end{remark}

\begin{corollary}
For any $D>0$ the maximization problem
\begin{equation}\label{Rn.max}
\mathbb V(D):=\sup\bigg\{|u(0)|^2:\ \ \ \|(-\Delta)^{l/2}u\|^2=1,
\ \ \|(-\Delta)^{m/2}u\|^2=D\bigg\}
\end{equation}
has the solution
\begin{equation}\label{Rn-sol}
\mathbb V(D)=c_{\mathbb{R}^n}(l,m)D^{1-\theta}.
\end{equation}
 The unique extremal function is
$$
U_D=
\frac{u_D}{\|(-\Delta)^{l/2}u_D\|}\,,
$$
see~\eqref{extr-Rn}.
\end{corollary}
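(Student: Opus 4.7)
The plan is to deduce this corollary directly from Theorem~\ref{Th4.main} applied in the non-compact setting described in Generalization IV of Subsection~\ref{ss5}, combined with the explicit Fourier representation of the Green's function computed in Remark~\ref{R:G_lambda}. Since the manifold is $\mathbb{R}^n$ and there is no kernel to project out, the formal framework of Part~I is fully justified provided the Fourier integrals defining $G_\lambda$ and its $L^2$-type norms converge, and this is exactly ensured by the hypothesis $l<n/2<m$ (integrability near the origin and at infinity, respectively).

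First I would observe that by translation invariance $G_\lambda(\xi,\xi)$ is independent of $\xi$, so it suffices to evaluate the maximization problem at $\xi=0$; this immediately reduces the corollary to computing $\mathbb{V}(0,D)$. Next I would invoke Remark~\ref{R:G_lambda} to record
$$
f(\lambda):=G_\lambda(0,0)=\tilde{c}\,\lambda^{-\theta},\qquad \tilde{c}:=\frac{\sigma(n)\pi}{(2\pi)^n\,2(m-l)\sin\pi\theta},
$$
so that $c_{\mathbb{R}^n}(l,m)=\tilde{c}\,S$ with $S=(\theta^\theta(1-\theta)^{1-\theta})^{-1}$. Then Lemma~\ref{Lem4.relations} gives the two auxiliary functions without any further integration:
$$
g(\lambda)=-f'(\lambda)=\tilde{c}\,\theta\,\lambda^{-\theta-1},\qquad h(\lambda)=f(\lambda)+\lambda f'(\lambda)=\tilde{c}(1-\theta)\lambda^{-\theta}.
$$

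Now I would solve the $D$--$\lambda$ relation from \eqref{4.fgh1}: $D(\lambda)=h(\lambda)/g(\lambda)=\frac{1-\theta}{\theta}\lambda$, giving the explicit inverse $\lambda(D)=\frac{\theta}{1-\theta}D$. Substituting into the parametric formula $\mathbb{V}(D)=f(\lambda(D))^2/g(\lambda(D))=\frac{\tilde{c}}{\theta}\lambda(D)^{1-\theta}$ and collecting the powers of $\theta$ and $1-\theta$ yields precisely $\mathbb{V}(D)=\tilde{c}\,S\,D^{1-\theta}=c_{\mathbb{R}^n}(l,m)D^{1-\theta}$, which is \eqref{Rn-sol}. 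For uniqueness and the form of the extremal, the formula \eqref{4.ext} of Theorem~\ref{Th4.main} identifies the unique maximizer as $G_{\lambda(D)}(\cdot,0)/\|G_{\lambda(D)}(\cdot,0)\|_B$, and by the Fourier representation of $G_\lambda$ this coincides with $u_D/\|(-\Delta)^{l/2}u_D\|$ in the notation of \eqref{extr-Rn} (modulo the mild notational ambiguity in the definition of $u_D$, which should be read as $u_D=\mathcal{F}^{-1}\bigl((\lambda|\xi|^{2l}+|\xi|^{2m})^{-1}\bigr)$).

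The one step requiring a genuine verification rather than a direct transcription from Part~I is the legitimacy of Theorem~\ref{Th4.main} on $\mathbb{R}^n$, since the general theory was stated for compact $\mathcal{M}$. I would handle this by noting that $G_\lambda(\cdot,0)$, although not in $L^2$ a priori, does belong to $\bar H^m=H^m(\mathbb{R}^n)$ as is visible from $\widehat{G_\lambda}(\eta)=(|\eta|^{2m}+\lambda|\eta|^{2l})^{-1}$ together with $l<n/2<m$, so the reproducing identity \eqref{3.G-u} and the Cauchy--Schwarz argument of Lemma~\ref{Lem3.est} both survive verbatim on $\mathbb{R}^n$. This is the only delicate point in the argument; everything else is a symbolic calculation.
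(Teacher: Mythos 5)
Your proof is correct and follows the route the paper itself indicates in Remark~\ref{R:G_lambda}: substitute the explicit Fourier-side formula $G_\lambda(\xi,\xi)=\tilde c\,\lambda^{-\theta}$ into the parametric machinery of Theorem~\ref{Th4.main} and Lemma~\ref{Lem4.relations}, obtain $\lambda(D)=\tfrac{\theta}{1-\theta}D$, and read off $\mathbb V(D)=\tilde c\,S\,D^{1-\theta}=c_{\mathbb R^n}(l,m)D^{1-\theta}$ together with the extremal $G_{\lambda(D)}(\cdot,0)$ up to normalization, which (as you note, modulo the paper's $\theta_1,\theta_2$ notation) is $u_D$. One small slip worth flagging: for $l>0$ the Green's function $G_\lambda(\cdot,0)$ is in general \emph{not} in $L^2(\mathbb R^n)$ --- its Fourier transform behaves like $|\eta|^{-2l}$ near the origin, which fails to be square integrable once $l\ge n/4$ --- so $\bar H^m$ in this non-compact setting should be understood as the homogeneous intersection $\dot H^l\cap\dot H^m$ rather than $H^m(\mathbb R^n)$; what is actually needed, and what $l<n/2<m$ guarantees, is the finiteness of $\|G_\lambda(\cdot,0)\|_A$ and $\|G_\lambda(\cdot,0)\|_B$, which is precisely what makes the Cauchy--Schwarz argument of Lemma~\ref{Lem3.est} go through on $\mathbb R^n$.
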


\begin{remark}\label{R:int-beta}
{\rm
The integral was calculated using the formula
\begin{equation}\label{int-beta}
\int\limits_0^\infty\frac {x^m}{(1+x^k)^l}dx\,=\,\frac 1k
B\left(\frac {m\!+\!1}k,\,l\!-\!\frac {m\!+\!1}k\right),
\end{equation}
which will also be helpful in what follows.
}
\end{remark}
\begin{remark}\label{R:Taikov}
{\rm
In the 1D case the constant $c_{\mathbb{R}}(l,m)$ was found
in~\cite{Taikov}.
}
\end{remark}

\subsection{Symmetric manifolds}\label{s2s2}

In this section, we discuss the case where the underlying manifold $\Cal M$
is {\it symmetric} and the operators $A$ and $B$ are invariant with respect
to the symmetry group.
As a consequence of this, the Green's function $G_\lambda(x,\xi)$ introduced
 in \eqref{3.green} depends only on $x-\xi$
 (since every two points on $\mathcal M$ can be identified by the proper
 symmetry map). By this reason, the key function $G_\lambda(\xi,\xi)$ is in
 fact independent of $\xi$ and is a function of one variable $\lambda$:
\begin{equation}\label{3.symmetry}
G_\lambda(\xi,\xi)=G(\lambda).
\end{equation}
This observation simplifies greatly the analysis and allows us in many cases to compute explicitly the best constants in the appropriate interpolation inequalities. We restrict ourselves below only to consider the two model examples: tori and spheres although the developed technique is applicable to other symmetric manifolds as well.

\subsubsection{The tori} We start with the case of $n$-dimensional torus $\Bbb T^n:=[-\pi,\pi]^n$ and the inequalities of the form
\begin{equation}\label{tor.1}
\|u\|_{L_\infty(\mathbb T^n)}\le C_{l,n,m}
\|(-\Delta)^{l/2}u\|_{L_2(\Bbb T^n)}^\theta\|(-\Delta)^{m/2}u\|^{1-\theta}_{L_2(\Bbb T^n)}
\end{equation}
for the periodic functions $u\in H^{m}(\Bbb T^n)$ with zero mean
($\<u\>:=\int_{\Bbb T^n}u(x)\,dx=0$). In that case, $A:=(-\Delta)^m$,
$B=(-\Delta)^l$, $\operatorname{ker}A=\operatorname{ker} B=\{\mathrm{const}\}$ and
all of the assumptions of the above developed abstract
theory are satisfied if $l<\frac n2<m$. In particular,
\begin{equation}\label{tor.2}
\bar H^m=H^m(\Bbb T^n)\cap\{\<u\>=0\}.
\end{equation}
The Green's function of the operator
$\Bbb A(\lambda)=(-\Delta)^m+\lambda(-\Delta)^l$ on the torus $\Bbb T^n$
(with zero mean) can be found by expanding
it into the multi-dimensional Fourier series. This gives
\begin{equation}\label{tor.green}
G_\lambda(x,\xi)=\frac1{(2\pi)^n}\sum_{k\in\mathbb{Z}^n_0}\frac{e^{ik(x-\xi)}}{|k|^{2m}+\lambda|k|^{2l}},\
\
G_\lambda(\xi,\xi)=
G(\lambda)=\frac1{(2\pi)^n}\sum_{k\in\mathbb{Z}^n_0}\frac1{|k|^{2m}+\lambda|k|^{2l}},
\end{equation}
where $k=(k_1,\cdots,k_n)$ is the multi-index,
$|k|^2=k_1^2+\cdots+ k_n^2$ and the summation holds for all
multi-indexes $k=(k_1,\cdots,k_n)\ne(0,\cdots,0)$: $\mathbb{Z}^n_0=
\mathbb{Z}^n\setminus\{0\}$.

\paragraph{ The case $l=0$}\mbox{}
The particular case $l=0$ has been studied in \cite{Zelik}. In that case,
the asymptotic behavior of $G(\lambda)$ as $\lambda\to\infty$ can be
analyzed in a straightforward way  using the Poisson summation formula
(see, e.\,g., \cite{S-W}):
\begin{equation}\label{Poisson}
\sum_{m\in\mathbb{Z}^n}f(m/\mu)=
(2\pi)^{n/2}\mu^n
\sum_{m\in\mathbb{Z}^n}\widehat{f}(2\pi m \mu),
\end{equation}
where
$\mathcal{F}(f)(\xi)=\widehat{f}(\xi)=(2\pi)^{-n/2}\int_{\mathbb{R}^n}
f(x)e^{-i\xi\cdot x}dx$ is the Fourier transform and $\mu>0$.

Namely, let
$$
\Phi_\lambda(x)=(2\pi)^{-n/2}\mathcal{F}^{-1}
(1/({|\xi|^{2m}+\lambda}))=\frac1{(2\pi)^n}\int_{\R^n}
\frac{e^{i\xi\cdot x}\,d\xi}{|\xi|^{2m}+\lambda}
$$
be the fundamental solution of the differential operator
$\Bbb A(\lambda)$ in the whole space $\R^n$. Then~\eqref{Poisson} gives
\begin{equation}\label{tor.poi}
G(\lambda)+\frac1{(2\pi)^n}\lambda^{-1}=
\frac1{(2\pi)^n}\sum_{k\in\Bbb Z^n} \frac1{|k|^{2m}+\lambda}=
\sum_{k\in \Bbb Z^n}\Phi_\lambda(2\pi k).
\end{equation}
Furthermore, due to scaling invariance, $\Phi_\lambda(x)=
 \lambda^{-1}\lambda^{n/(2m)}\Phi_1(\lambda^{1/(2m)}x)$ and, due to the
 analyticity of the function
 $\xi\to\frac1{|\xi|^{2m}+1}$, we have $|\Phi_1(x)|\le Ce^{-C_m|x|}$
for some positive $C$ and $C_m$. Therefore, \eqref{tor.poi} reads
\begin{equation}\label{tor.poi1.0}
G(\lambda)=\lambda^{-1}\lambda^{n/(2m)}\Phi_1(0)-\frac1{(2\pi)^n}\lambda^{-1}+O(e^{-C_m\lambda^{1/(2m)}}).
\end{equation}
Finally, computing the table integral
$\Phi_1(0)=(2\pi)^{-n}\int_{\R^n}\frac{d\xi}{|\xi|^{2m}+1}$, we end up with
\begin{equation}\label{tor.Poisson}
G(\lambda)=(2\pi)^{-n}\lambda^{-1}\(\frac{\pi\sigma(n)}{2m\sin(\frac{\pi n}{2m})}\lambda^{\frac{n}{2m}}-1\)+O(e^{-C_{m}\lambda^{1/(2m)}}),
\end{equation}
where $\sigma(n)={2\pi^{n/2}}/{\Gamma(n/2)}$ is the surface area of the
$(n-1)$-dimensional unit sphere. Note that $1-\theta=n/{2m}$ now, so
\eqref{tor.Poisson} has the form of \eqref{6.expand} with $g_3=0$ and,
due to \ref{Prop6.as}, we have
\begin{equation}\label{tor.V}
\mathbb V(D)= c_n(m)D^{n/(2m)}-k_n(m)-l_n(m)D^{-n/(2m)}+O(D^{-n/m}),
\end{equation}
where
\begin{multline}\label{tor.const}
c_n(m):=c_{\mathbb{R}^n}(0,m)=\frac{\pi\sigma(n)}{(2\pi)^n n^{n/(2m)}(2m-n)^{1-n/(2m)}
\sin\frac{\pi n}{2m}},\ \
k_n(m):=\frac{2m}{(2\pi)^n(2m-n)},\\
l_n(m):=\frac{2n^{1+n/(2m)}m^2\sin\frac{\pi n}{2m}}{(2\pi)^n\pi\sigma(n)(2n-m)^{2+n/(2m)}},
\end{multline}
see \cite{Zelik} for the details. Thus, since $l_n(m)>0$,
\begin{equation}\label{tor.big}
\mathbb V(D)\le c_n(m)D^{n/(2m)}-k_n(m)
\end{equation}
for large $D$ and using the fact that $D\to\Bbb V(D)$ is continuous, we conclude that
\begin{equation}\label{tor.Kn}
K_n(m):=\sup_{D\in[1,\infty)}\{c_n(m)D^{n/(2m)}-\mathbb V(D)\}<\infty
\end{equation}
and
\begin{equation}\label{tor.all}
\mathbb V(D)\le c_n(m)D^{n/(2m)}-K_n(m)
\end{equation}
already for all $D\ge1$. Thus, according to \eqref{2.Vint}, we have established the following result of \cite{Zelik}.
\begin{theorem}\label{Th.torall} The following inequality holds for all
$u\in \bar H^m(\Bbb T^n)$:
\begin{equation}\label{tor.int}
\|u\|_{L_\infty(\Bbb T^n)}^2\le c_n(m)\|u\|_{L_2(\Bbb T^n)}^{2-n/m}
\|(-\Delta)^{m/2}u\|_{L_2(\Bbb T^n)}^{n/m}-K_n(m)\|u\|^2_{L^2(\Bbb T^n)},
\end{equation}
where $c_n(m)$ and $K_n(m)\le k_n(m)$ are sharp and are determined by \eqref{tor.const} and \eqref{tor.Kn} respectively.
\end{theorem}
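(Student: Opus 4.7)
The plan is to package the ingredients already assembled in the preceding pages: the Poisson summation identity producing the asymptotic expansion~\eqref{tor.Poisson} of $G(\lambda)$, and the general Proposition~\ref{Prop6.as} that converts such an expansion into an expansion~\eqref{tor.V} for $\mathbb V(D)$. First I would check that the abstract framework of Subsection~\ref{ss1} applies with $A=(-\Delta)^m$, $B=I$ on $\mathcal{M}=\mathbb T^n$: the kernel of $A$ consists of constants, $\lambda_0=1$, and $D$ therefore ranges over $[1,\infty)$. The expansion~\eqref{tor.Poisson} fits~\eqref{6.expand} with $\theta=1-n/(2m)$, explicit $g_1$ and $g_2$ readable directly from~\eqref{tor.Poisson}, $g_3=0$, and an exponentially small remainder that may legitimately be differentiated term by term.

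Feeding these data into Proposition~\ref{Prop6.as} yields~\eqref{tor.V}, with $c_n(m)=g_1 S$ being the sharp leading constant and $k_n(m)=-g_2/\theta$. Since $g_3=0$ the third-order coefficient reduces to $-\tfrac12 S^{-1}g_2^2(1-\theta)/(\theta^3 g_1)$, which is manifestly negative; hence $l_n(m)>0$ and the strict inequality
\begin{equation*}
\mathbb V(D)<c_n(m)D^{n/(2m)}-k_n(m)
\end{equation*}
holds for every sufficiently large $D$.

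To upgrade this to all $D\in[1,\infty)$ I would invoke continuity of $D\mapsto\mathbb V(D)$, which follows from Theorem~\ref{Th4.main} combined with the smoothness of $\lambda\mapsto G_\lambda(\xi,\xi)$ and the monotone bijection $D\leftrightarrow\lambda$ of Lemma~\ref{Lem3.delta}. Then $D\mapsto c_n(m)D^{n/(2m)}-\mathbb V(D)$ is continuous on $[1,\infty)$ and tends to $k_n(m)$ as $D\to\infty$, so the supremum $K_n(m)$ of~\eqref{tor.Kn} is finite and bounded above by $k_n(m)$; by its very definition, inequality~\eqref{tor.all} then holds on all of $[1,\infty)$. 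Translating~\eqref{tor.all} by the homogeneity identity~\eqref{2.Vint} with $B=I$ yields precisely~\eqref{tor.int}. Sharpness of $c_n(m)$ is automatic from Theorem~\ref{Th4.main1}, since $\sup_{\lambda>0}\lambda^\theta G(\lambda)=g_1$ is attained only in the limit $\lambda\to\infty$ so no extremal exists; sharpness of $K_n(m)$ is tautological from its definition as a supremum. The most delicate step is the upgrade from the strict asymptotic inequality at large $D$ to a sharp inequality on the entire interval $[1,\infty)$, which is handled cleanly by encoding the deficit as the supremum $K_n(m)$ rather than attempting to pin down its exact value.
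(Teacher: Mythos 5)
Your proposal follows the same line of argument as the paper: Poisson summation gives~\eqref{tor.Poisson}, Proposition~\ref{Prop6.as} converts it into~\eqref{tor.V}, continuity of $\mathbb V$ upgrades the asymptotic estimate to all $D\ge1$, and~\eqref{2.Vint} then delivers~\eqref{tor.int}, with sharpness of $c_n(m)$ from Theorem~\ref{Th4.main1} and of $K_n(m)$ from its definition as an optimal corrector. That matches the paper's proof step for step.

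One slip, which you inherit verbatim from the paper's own display~\eqref{tor.Kn} but then compound: the corrector function $D\mapsto c_n(m)D^{n/(2m)}-\mathbb V(D)$ tends to $k_n(m)$ \emph{from above} as $D\to\infty$ (because $l_n(m)>0$, so the third term in~\eqref{tor.V} is negative), hence its \emph{supremum} over $[1,\infty)$ is at least $k_n(m)$ and in fact strictly larger, not bounded above by it. Moreover a supremum would give the inequality $\mathbb V(D)\ge c_n(m)D^{n/(2m)}-K_n(m)$, which is the wrong direction for~\eqref{tor.all}. For everything to cohere --- \eqref{tor.all} holding for all $D$, the bound $K_n(m)\le k_n(m)$, and the paper's later remark that $K_n(m)$ can be negative for large $m$, $n>1$ --- the constant must be
$$
K_n(m)=\inf_{D\in[1,\infty)}\bigl\{c_n(m)D^{n/(2m)}-\mathbb V(D)\bigr\},
$$
finite because the corrector function is continuous on $[1,\infty)$ and has a finite limit at infinity. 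Your sentence asserting that the supremum ``is finite and bounded above by $k_n(m)$'' is thus false as written; replacing ``supremum'' by ``infimum'' throughout that sentence fixes the logic and makes the deduction of~\eqref{tor.all} immediate.
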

We see that the constant $c_n(m)$ here coincides with the best constant
in the analogous inequality on the whole $\R^n$, see~\eqref{cR^n}.
 In contrast to that, the sharp constant $K_n(m)$ is unlikely to
be expressed analytically for all $n$ and $m$ since, according to the
definition \eqref{tor.Kn}, we need to know the function $V(D)$ not only
for large $D$, but for all $D\ge1$ in order to compute it. As shown in
\cite{Zelik}, the asymptotic expansion \eqref{tor.V} works for
{\it very large} $D$ only (if $m$ is large enough) and for the
intermediate values of $D$ this function is {\it oscillatory}
(which can be explained by studying the limit $m\to\infty$, see
\cite{Zelik} for the details).

Moreover, there is a strong difference between the case $n=1$ and the
multi-dimensional case $n>1$. In the first case, as shown in
\cite{Zelik}, we always have $K_1(m)>0$, so the lower order term in
\eqref{tor.int} {\it improves} the classical interpolation inequality.
However, in the multi-dimensional case, this constant become strictly
negative for sufficiently large $m$
(for instance, for $m>9$ if $n=2$ and for $m>6$ for $n=3$). Therefore,
in that case, the lower order corrector becomes {\it positive} and
necessary for the validity of the interpolation inequality. In other
words, if we are interested only in the classical interpolation
inequality on the torus (without the lower order correctors), we have
to {\it increase} the constant in comparison with the case of $\R^n$.

Nevertheless, as shown in \cite{Zelik} with the help of {\it numerics}
there are 3 particular cases where $K_n(m)=k_n(m)$ and as a consequence,
all constants in \eqref{tor.int} can be found analytically. That are

1) The case  $n=1$ and $m=1$: the inequality
\begin{equation}\label{tor.1.1}
\|u\|_{L_\infty(\Bbb T^1)}^2\le\|u\|_{L_2(\Bbb T^1)}\|u'\|_{L_2(\Bbb T^1)}-\frac1{\pi}\|u\|^2_{L_2(\Bbb T^1)}
\end{equation}
holds for all $2\pi$-periodic functions with zero mean;

2) The case $n=1$ and $m=2$: the inequality
\begin{equation}\label{tor.1.2}
\|u\|_{L_\infty(\Bbb T^1)}^2\le \frac{\sqrt2}{\sqrt[4]{27}}\|u\|_{L_2(\Bbb T^1)}^{3/2}
\|u''\|_{L_2(\Bbb T^1)}^{1/2}-\frac2{3\pi}\|u\|^2_{L_2(\Bbb T^1)}
\end{equation}
holds for all $2\pi$-periodic functions with zero mean;

3) The case $n=2$ and $m=2$: the inequality
\begin{equation}\label{tor.2.2}
\|u\|^2_{L_\infty(\Bbb T^2)}\le\frac14\|u\|_{L_2(\Bbb T^2)}\|\Delta u\|_{L_2(\Bbb T^2)}-\frac1{2\pi^2}\|u\|^2_{L_2(\Bbb T^2)}
\end{equation}
holds for all $2\pi\times 2\pi$-periodic functions with zero mean.
As the numerics suggests, that are the only cases (at least with integer  $m$) when the constant $K_n(m)=k_n(m)$, see \cite{Zelik}.

Based on the technique developed above, we give below the purely
analytic proof of the first two inequalities.
\begin{theorem}\label{Th.tor.1.1} The interpolation inequalities
\eqref{tor.1.1} and \eqref{tor.1.2} hold for all $2\pi$-periodic functions
with zero mean and all constants in those inequalities are sharp.
\end{theorem}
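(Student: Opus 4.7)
The plan is to apply the variational characterization of Theorem~\ref{Th.var}: for any test value $\lambda\ge -\lambda_0$ one has $\mathbb V(D)\le (\lambda+D)G_\lambda(\xi,\xi)$, where on $\mathbb{T}^1$ the diagonal Green's function is independent of $\xi$ and will be denoted $G(\lambda)$. Thus, for each of \eqref{tor.1.1} and \eqref{tor.1.2} it suffices to exhibit an explicit $\lambda_*(D)$ for which $(\lambda_*(D)+D)G(\lambda_*(D))$ does not exceed the claimed right-hand side, uniformly for $D\ge \lambda_0=1$.

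The first step is to obtain closed-form expressions for $G(\lambda)$. Summing the Fourier series \eqref{tor.green} gives
$$
G(\lambda)=\frac{\coth(\pi\sqrt\lambda)}{2\sqrt\lambda}-\frac{1}{2\pi\lambda}\qquad(m=1),
$$
while for $m=2$ a partial-fractions (or residue) computation of $\sum_{k\ne 0}(k^4+\lambda)^{-1}$ yields the analogue
$$
G(\lambda)=\frac{1}{2\sqrt 2\,\lambda^{3/4}}\cdot\frac{\sinh\nu+\sin\nu}{\cosh\nu-\cos\nu}-\frac{1}{2\pi\lambda},\qquad \nu=\pi\sqrt 2\,\lambda^{1/4}.
$$
In both cases one splits $G=G_{\mathrm{alg}}+G_{\mathrm{exp}}$ by replacing the hyperbolic (or hyperbolic-plus-trigonometric) factor by its limit $1$ as $\lambda\to\infty$; the remainder $G_{\mathrm{exp}}$ is non-negative and exponentially small in $\lambda^{1/(2m)}$.

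The main observation is an ``algebraic coincidence'' at the AM--GM point: for $F_{\mathrm{alg}}(\lambda):=(\lambda+D)G_{\mathrm{alg}}(\lambda)$ one checks by a direct one-variable minimization that $\inf F_{\mathrm{alg}}$ equals precisely the target right-hand side, namely $\sqrt D-1/\pi$ at $\lambda=D$ for $m=1$ and $(\sqrt 2/\sqrt[4]{27})D^{1/4}-2/(3\pi)$ at $\lambda=3D$ for $m=2$. The only obstruction to taking $\lambda=D$ (resp.\ $\lambda=3D$) outright is the positive tail $(\lambda+D)G_{\mathrm{exp}}(\lambda)$. Crucially, $F_{\mathrm{alg}}'$ does \emph{not} vanish at these points: the ``$-D/(2\pi\lambda)$'' piece contributes a strictly positive derivative there, so shifting $\lambda$ slightly below the AM--GM point gives a linear-in-shift decrease. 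For $m=1$ we take the clean algebraic choice
$$
\lambda_*(D)=\bigl(\sqrt D-1/\pi\bigr)^2,
$$
and a short computation yields the remarkable simplification
$$
F_{\mathrm{alg}}(\lambda_*(D))-\bigl(\sqrt D-1/\pi\bigr)=-\frac{\sqrt D}{2\pi^2(\sqrt D-1/\pi)^2}.
$$
For $m=2$ the analogous truncation of the asymptotic expansion~\eqref{lambda(D)} around $\lambda=3D$ yields a polynomial negative margin of order $D^{-1/4}$.

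It remains to show that this explicit negative margin dominates $(\lambda_*(D)+D)G_{\mathrm{exp}}(\lambda_*(D))$ for \emph{all} $D\ge 1$. For $m=1$, writing $s=\sqrt D$ and $t=s-1/\pi$ and using $\coth x-1=2/(e^{2x}-1)$, the required inequality collapses to the elementary scalar estimate
$$
\frac{2\pi^2 t(t^2+s^2)}{s(e^{2\pi t}-1)}\le 1,
$$
a straightforward monotonicity statement on $s\ge 1$ (at $s=1$ the left side is $\approx 0.27$ and it tends to $0$ as $s\to\infty$). The $m=2$ case is structurally the same with the hyperbolic-plus-trigonometric factor in place of $\coth$. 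Sharpness of all four constants follows from the two-term asymptotic expansion \eqref{tor.V}, in which $c_n(m)$ and $-g_2/\theta=-k_n(m)$ appear as the first two coefficients of $\mathbb V(D)$ as $D\to\infty$. The main obstacle I anticipate is the quantitative final step in the $m=2$ case, where $G_{\mathrm{exp}}$ mixes exponentially decaying and oscillatory contributions, so the clean sign structure exploited for $m=1$ is lost and a more delicate estimate is required to cover the whole range $D\in[1,\infty)$.
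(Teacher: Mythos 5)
Your overall strategy — invoke the variational bound $\mathbb V(D)\le(\lambda+D)G(\lambda)$ from Theorem~\ref{Th.var}, substitute an explicit $\lambda_*(D)$ suggested by the expansion~\eqref{lambda(D)}, split $G=G_{\mathrm{alg}}+G_{\mathrm{exp}}$ by replacing $\coth$ (resp.\ the hyperbolic/trigonometric ratio) with $1$, and bound the exponentially small remainder — is exactly the paper's strategy. Your choice of substitution point for $m=1$ is, however, a genuine simplification. Where the paper takes $\lambda_*=D-\tfrac12$ and then needs several rounds of Taylor bounds and the numerical tally $-1.3873\ldots<0$, your $\lambda_*=(\sqrt D-1/\pi)^2$ (a resummation of the first two terms $D-\tfrac2\pi\sqrt D$ of~\eqref{lambda(D)}) produces the exact identity $F_{\mathrm{alg}}(\lambda_*)-(\sqrt D-1/\pi)=-\sqrt D/\bigl(2\pi^2(\sqrt D-1/\pi)^2\bigr)$ — I have reproduced this computation and it is correct — so the entire $m=1$ claim collapses to the scalar inequality $2\pi^2 t(t^2+s^2)\le s(e^{2\pi t}-1)$ with $s=\sqrt D\ge1$, $t=s-1/\pi$. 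Your monotonicity claim there is right and easy to make rigorous: the logarithmic derivative is $1/t+2(t+s)/(t^2+s^2)-1/s-2\pi e^{2\pi t}/(e^{2\pi t}-1)$, and since $1/t\le1/(1-1/\pi)<1.5$, $2(t+s)/(t^2+s^2)<4/s\le4$, while the last term exceeds $2\pi>6.28$, the derivative is negative on all of $[1,\infty)$; at $s=1$ the ratio is about $0.28<1$. Sharpness via the two-term asymptotics of $\mathbb V(D)$ is the same argument as the paper's.

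For $m=2$ you stop at the setup and flag the difficulty, and this is where a genuine gap remains: you exhibit no explicit $\lambda_*$, carry out no remainder estimate, and correctly observe that $(\sinh\nu+\sin\nu)/(\cosh\nu-\cos\nu)-1$ changes sign, so the clean positivity of $G_{\mathrm{exp}}$ that you exploit for $m=1$ is lost. It is only fair to note that the paper itself does not close this step analytically either: it substitutes $\lambda_*=3D-3/2$, uses the bound $(\sinh\alpha+\sin\alpha)/(\cosh\alpha-\cos\alpha)<1+4.1e^{-\alpha}$ for $\alpha\ge\pi\sqrt2$, reduces to $R(x)<0$ for $x\ge(3/2)^{1/4}$, proves this for large $x$, and then refers to a graph (Fig.~\ref{fig:R(x)}) on the remaining compact interval. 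So the obstacle you anticipate is precisely the step the paper also leaves to a picture. To finish in the same spirit, take $\lambda_*=3D-3/2$ or your own truncation, use the paper's one-sided bound on the ratio, establish $R(x)<0$ for $x\ge x_0$ by elementary estimates, and close the compact interval $[(3/2)^{1/4},x_0]$ by a rigorous (e.g.\ interval-arithmetic) check rather than a plot.
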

\begin{proof} We first consider~\eqref{tor.1.1}.
Since
$\Phi_\lambda(x)=(2\pi)^{-1/2}\mathcal{F}^{-1}((\xi^2+\lambda)^{-1})=
\frac1{2\sqrt{\lambda}}e^{-\sqrt\lambda |x|}$ for all $\lambda>0$,
according to \eqref{tor.poi} and summing the geometric progression,
we have
\begin{equation}\label{tor.expan}
G(\lambda)=\sum_{k\in\Bbb Z}
\frac1{2\sqrt{\lambda}}e^{-2\pi\sqrt\lambda |k|}-\frac1{2\pi\lambda}=
\frac1{2\pi}\frac{\pi\sqrt\lambda\coth(\pi\sqrt\lambda)-1}\lambda.
\end{equation}
Thus, in view of Theorem~\ref{Th.var}, for $D\ge1$,
\begin{equation}\label{tor.var}
\Bbb V(D)=\min_{\lambda\ge-1}\{(\lambda+D)G(\lambda)\}\le
\{(\lambda+D)G(\lambda)\}\vert_{\lambda=D-1/2}=(2D-1/2)G(D-1/2),
\end{equation}
where we have replaced minimum with respect to $\lambda$ by the value
at $\lambda=D-1/2$. Thus, we only need to prove that
\begin{multline}\label{tor.bigg}
(2D-1/2)G(D-1/2)-D^{1/2}+\frac1\pi=\\
=\frac1{4\pi(2D-1)}\(4\pi D\sqrt{4D-2}\cdot\coth\alpha
\,-\pi\sqrt{4D-2}\cdot\coth\alpha-2-
8\pi D^{3/2}+4\pi\sqrt{D}\)<0
\end{multline}
for all $D\ge1$, where we set for brevity
$\alpha:=\frac{\pi\sqrt{4D-2}}2$. To simplify the expression on the
right-hand side of \eqref{tor.big}, we use that
$$
\sqrt{4D-2}<2\sqrt{D}-\frac12 D^{-1/2}-\frac1{16}D^{-3/2},\ \ D\ge1
$$
(this inequality is obtained by expanding $\sqrt{1-(2D)^{-1}}$ in
Taylor series and noting that all dropped out terms there are negative).
Using also that $\coth\alpha\ge1$ in the negative terms, we end up with
\begin{multline}\label{tor.bigger}
\Bbb V(D)-D^{1/2}+\frac1{\pi}\le
\frac1{4\pi(2D-1)}\(4\pi D(8D^{1/2}-2D^{-1/2}-D^{-3/2}/4)\cdot
\coth\alpha-\right.\\\left.
\pi\sqrt{4D-2}\cdot\coth\alpha-2-8\pi D^{3/2}+4\pi\sqrt{D}\)=\\
=\frac1{16\pi D^{1/2}(2D-1)}\(32\pi D^2(\coth\alpha-1)-\right.\\\left.
8\pi D\coth\alpha-\pi\coth\alpha-4\pi\sqrt{4D-2}\sqrt{D}-8\sqrt{D}+
16\pi D\)<\\<
\frac1{16\pi D^{1/2}(2D-1)}\(\frac{64\pi D^2}{e^{2\alpha}-1}-
8\pi D-\pi-4\pi\sqrt{4D-2}\sqrt{D}-8\sqrt{D}+16\pi D\)=\\=
\frac1{16\pi D^{1/2}(2D-1)}\(\frac{64\pi D^2}{\exp(\pi\sqrt{4D-2})-1}+
\frac{4\pi}{1+\sqrt{1-(2D)^{-1}}}-\pi-8\sqrt D\).
\end{multline}
We note that the function $f(D):=\frac{64\pi D^2}{\exp(\pi\sqrt{4D-2})-1}$ is strictly decreasing when $D\ge1$ since
$$
f'(D)=-\frac{128\pi D\(\sqrt{4D-2}-\sqrt{4D-2}\exp(\pi\sqrt{4D-2})+\pi D\exp(\pi\sqrt{4D-2})\)}{(-1+\exp(\pi\sqrt{4D-2}))^2\sqrt{4D-2}}<0
$$
for all $D\ge1$. Analogously, the second term in the right-hand side of \eqref{tor.bigger} and the last one are also strictly decreasing, so replacing them by the their maximal values at $D=1$, we finally have
\begin{multline*}
\Bbb V(D)-D^{1/2}+\frac1{\pi}<
\frac1{16\pi D^{1/2}(2D-1)}\(\frac{64\pi}{\exp(\pi\sqrt{2})-1}+
\frac{8\pi}{2+\sqrt{2}}-\pi-8\)=\\=\frac1{16\pi D^{1/2}(2D-1)}
\cdot(-1.3873\ldots)<0.
\end{multline*}
Thus, inequality \eqref{tor.1.1} is proved. The fact that all constants
there are sharp follows from the previously established
asymptotics~\eqref{tor.V}:
$$
\Bbb V(D)=D^{1/2}-\frac1{\pi}+O(D^{-1/2})\quad\text{as}\quad D\to\infty.
$$
Before we turn to~\eqref{tor.1.2} we note that
the general formula~\eqref{lambda(D)} gives in our case
$
\lambda(D)=D-\frac2\pi\sqrt{D}+\dots\,.
$
Other choices of $\lambda$ in the substitution in~\eqref{tor.var}
will do, for example, $\lambda=D-1$, $\lambda=D-\frac2\pi\sqrt{D}$.
However, the choice $\lambda=D$ will not.

We now consider~\eqref{tor.1.2}. Using contour integration
(see \cite[Section 3.3]{Titch}) we can sum the series in
$G(\lambda)$:
$$
\sum_{n=1}^\infty\frac1{\mu^4+n^4}=
\frac{\pi\sqrt{2}}{4\mu^3}\,\frac{\sinh(\pi\sqrt{2}\mu)+\sin(\pi\sqrt{2}\mu)}
{\cosh(\pi\sqrt{2}\mu)-\cos(\pi\sqrt{2}\mu)}-\frac1{2\mu^4},
$$
and hence
\begin{equation}\label{G_lam.1.2}
G(\lambda)=\frac2{2\pi}\sum_{n=1}^\infty\frac1{n^4+\lambda}=
\frac1\pi\left(\frac{\pi\sqrt{2}}{4\lambda^{3/4}}\,
\frac{\sinh(\pi\sqrt{2}\lambda^{1/4})+\sin(\pi\sqrt{2}\lambda^{1/4})}
{\cosh(\pi\sqrt{2}\lambda^{1/4})-\cos(\pi\sqrt{2}\lambda^{1/4})}-
\frac1{2\lambda}\right).
\end{equation}
Since
$$
\frac{\sinh\alpha+\sin\alpha}
{\cosh\alpha-\cos\alpha}\le\frac{\sinh\alpha+1}
{\cosh\alpha-1}<1+4.1e^{-\alpha}\quad\text{for}\quad
\alpha=\pi\sqrt{2}\lambda^{1/4}\ge\pi\sqrt{2}=4.4428\dots\,,
$$
it follows that
$$
G(\lambda)<G_1(\lambda):=
\frac1\pi\left(\frac{\pi\sqrt{2}}{4\lambda^{3/4}}\,
\left(1+4.1e^{-\pi\sqrt{2}\lambda^{1/4}}\right)-
\frac1{2\lambda}\right)
$$
By Theorem~\ref{Th.var}, for $D\ge1$,
substituting $\lambda=3D-3/2$ we obtain
\begin{multline}\label{tor.var.1.2}
\Bbb
V(D)=\min_{\lambda\ge-1}\{(\lambda+D)G(\lambda)\}\le\\\le
\{(\lambda+D)G(\lambda)\}\vert_{\lambda=3(D-1/2)}=(4D-3/2)G(3D-3/2)<
(4D-3/2)G_1(3D-3/2).
\end{multline}
Therefore it suffices to show that
$$
R(D):=(4D-3/2)G_1(3D-3/2)-\frac{\sqrt{2}}{\sqrt[4]{27}}D^{1/4}+\frac2{3\pi}
\le0.
$$
Setting $x:=(3D-3/2)^{1/4}$, $x\ge (3/2)^{1/4}=1.1066\dots$
we have
$$
\aligned
6\pi
R(D)=(8x^4+3)\left(\frac{\pi\sqrt{2}}{4x^3}
\left(1+4.1e^{-\pi\sqrt{2}x}\right)-\frac1{2x^4}\right)
-2\sqrt{2}\pi(x^4+3/2)^{1/4}+4=\\=
2\pi\sqrt{2}\left(\left(x+\frac3{8x^3}-(x^4+3/2)^{1/4}\right)
+\left(x+\frac3{8x^3}\right)4.1e^{-\pi\sqrt{2}x}\right)-\frac3{2x^4}
=:R(x).
\endaligned
$$
The first term in parenthesis is $O(x^{-7})$, hence $R(x)<0$
already for, say,  $x\ge2$. Instead of analyzing $R(x)$ near $x=(3/2)^{1/4}$
we show in Fig.~\ref{fig:R(x)} the graph  of $R(x)$ in this region, so that
$R(x)<0$ for all $x$.
\begin{figure}[htb]
\centerline{\psfig{file=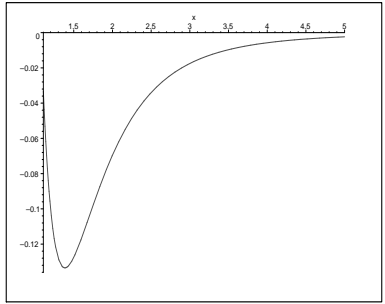,width=4cm,height=5cm,angle=270}}
\caption{Graph of the function $R(x)$ on $x\in[(3/2)^{1/4},5]$.}
\label{fig:R(x)}
\end{figure}

In conclusion we point out that
the general formula~\eqref{lambda(D)} gives
$
\lambda(D)=3D-\frac{4\sqrt{2}}{\pi\sqrt[4]{3}}D^{3/4}+\dots\,
$, which explains  the choice of at least  the leading term
in the substitution $\lambda=3D-3/2$ that has been used above.
 The proof of the
theorem is complete.
\end{proof}

\paragraph{The case $l>0$} The analysis of the Green's function
\eqref{tor.green}  for general $l>0$ is more delicate. Indeed, in this
case, the function $k\to\frac1{|k|^{2m}+\lambda|k|^{2l}}$ has a
singularity at $k=0$. By this reason, the associated fundamental
solution $\Phi_\lambda(x)$ is not rapidly decaying and the Poisson
formula \eqref{tor.poi} becomes not essentially helpful. To overcome
this difficulty, following \cite{Zelik}, we introduce $\mu=\lambda^{-1}$
and rewrite \eqref{tor.poi} as follows
\begin{equation}\label{tor.poi1}
G(\lambda)=(2\pi)^{-n}\mu\sum_{k\in\mathbb{Z}^n_0}
\frac1{k^{2l}(\mu|k|^{2(m-l)}+1)}=\mu\tilde G(\mu).
\end{equation}
Then, differentiating the function $\tilde G(\mu)$ $s$-times in $\mu$,
we get
\begin{equation}\label{tor.poi2}
\frac{d^s\tilde G(\mu)}{d\mu^s}=(-1)^s\Gamma(s+1)(2\pi)^{-n}
\sum_{k\in\mathbb{Z}^n_0}\frac{|k|^{2(m-l)s-2l}}{(\mu|k|^{2(m-l)}+1)^{s+1}}
\end{equation}
and the function
$k\to \frac{|k|^{2(m-l)s-l}}{(\mu|k|^{2(m-l)}+1)^{s+1}}$ becomes
regular at $k=0$ if $s\ge\frac l{m-l}$ (and the rate of decay of this
function as $k\to\infty$ remains $|k|^{2m}$ for all $s$). Thus, we may
apply the Poisson summation formula in order to find the asymptotic
behavior of \eqref{tor.poi2} as $\mu\to0+$
(analogously to \eqref{tor.poi}) which after the $s$-times integration
will give us the asymptotics for the initial function $G(\lambda)$
{\it up to $s$ integration constants} which should be determined using
the alternative methods, see below.
\par
We illustrate this method for the particular case $l=1$ and $m=2$ only.
We also restrict ourselves to considering only the two ($n=2$) and
three ($n=3$) dimensional cases although the general case can be
analyzed in a similar way.
\par
Let $n=2$. Then, we need to differentiate $\tilde G(\mu)$ only once
in order to remove the singularity at $k=0$:
\begin{equation}\label{tor.bg0}
\frac{d \tilde G(\mu)}{d\mu}=-\frac1{4\pi^2}\sum_{k\in\mathbb{Z}^2_0}
\frac1{(1+\mu|k|^2)^2}.
\end{equation}
Applying the Poisson summation formula to this series, we get
\begin{equation}\label{tor.bg1}
\frac{d \tilde G(\mu)}{d\mu}=-\frac1{4\pi^2}
\(\frac\pi\mu-1+2\pi^2\mu^{-3/2}\sum_{k\in\mathbb{Z}^2_0}
|k| K_1(2\pi\mu^{-1/2}|k|)\),
\end{equation}
where $K_n(z)$ is the modified Bessel function of order $n$,
see \cite{Wat} for the details. Integrating these series in $\mu$ and
using the standard formulas for the integrals of Bessel functions,
namely, $K_0'(x)=-K_1(x)$,
we end up with the desired formula
\begin{equation}\label{tor.bg00}
\tilde G(\mu)=\frac1{4\pi^2}\(\pi\log\frac1\mu+\beta+\mu-8\pi
\sum_{k\in\mathbb{Z}^2_0}K_0(2\pi\mu^{-1/2}|k|)\),
\end{equation}
where $\beta$ is an integration constant. As shown in \cite{Zelik}, based on the Hardy lattice formula
$$
\sum_{k\in\mathbb{Z}^2_0}\frac1{|k|^{2s}}=4\zeta(1+s)\beta(1+s),
$$
where $\zeta(s)$ and $\beta(s)$ are the Riemann
zeta and Dirichlet beta functions respectively,
\begin{equation}\label{tor.beta}
 \beta=\pi\gamma+4\beta'(1)=\pi\(2\gamma+2\log2+3\log\pi-4\log\Gamma(1/4)\),
\end{equation}
where $\gamma$ and $\Gamma(z)$ are the Euler-Mascheroni constant and Euler
gamma function respectively. Thus, we have proved the following result.
\begin{lemma} Let $n=m=2$ and $l=1$. Then, at least for $\lambda>0$,
the Green's function \eqref{tor.poi1} can be written as follows:
\begin{multline}\label{tor.bg2}
G(\lambda)=\frac1{4\pi}\lambda^{-1}\log\lambda+
\frac{\beta}{4\pi^2\lambda}+\frac1{4\pi^2\lambda^2}-
\frac1{2\pi}\lambda^{-1}\sum_{k\in\mathbb{Z}^2_0}
K_0(2\pi\lambda^{1/2}|k|)\le\\\le\frac1{4\pi^2\lambda}
\(\pi\log\lambda+\beta+\frac1\lambda\),
\end{multline}
where $\beta$ is defined in \eqref{tor.beta}. Moreover, the term
containing the sum of Bessel functions in the middle part of
\eqref{tor.bg2} is exponentially small
(of order $O(e^{-\pi\lambda^{1/2}})$ as $\lambda\to\infty$.
\end{lemma}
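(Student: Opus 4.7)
The plan is to carry out the regularization-and-integration scheme outlined in the excerpt around \eqref{tor.poi1}--\eqref{tor.bg00}, specialized to $n=m=2$, $l=1$, and then derive the stated upper bound from positivity. Concretely, I would proceed through five stages: (i) differentiate $\tilde G(\mu)$ once in $\mu$ so that the resulting series is absolutely summable and the Poisson summation formula applies; (ii) compute the Poisson-transformed series in closed form using the 2D Fourier transform of $(1+\mu|\xi|^2)^{-2}$; (iii) integrate back in $\mu$ using $K_0' = -K_1$; (iv) determine the integration constant $\beta$; (v) return to $G(\lambda) = \mu \tilde G(\mu)$ and conclude by positivity of $K_0$.

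For stage (ii), the key identity is the 2D Fourier transform
\[
\mathcal F\bigl((1+|\xi|^2)^{-2}\bigr)(x) \,=\, \tfrac{|x|}{2}\,K_1(|x|),
\]
which, after rescaling $\xi \mapsto \xi/\sqrt\mu$ and applying \eqref{Poisson}, converts the sum over $\mathbb{Z}^2$ into a dual-lattice sum over $2\pi\mathbb{Z}^2$ involving $K_1$. Extracting the $k=0$ dual-mode contribution (which produces the $\pi/\mu$ term) and moving the $k=0$ summand on the original side (which produces the constant $-1$) yields \eqref{tor.bg1}. For stage (iii), the chain rule gives $\frac{d}{d\mu}K_0\bigl(2\pi\mu^{-1/2}|k|\bigr) = \pi\mu^{-3/2}|k|\,K_1\bigl(2\pi\mu^{-1/2}|k|\bigr)$, so term-by-term integration of the Bessel series in \eqref{tor.bg1} produces exactly the $K_0$ series in \eqref{tor.bg00}, while $\pi/\mu$ integrates to $-\pi\log\mu^{-1}$ and the constant $-1$ integrates to $\mu$, leaving an unknown additive constant.

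Stage (iv) is the main obstacle, and I would import it from \cite{Zelik}. Since a naive evaluation at $\mu=0$ is blocked by the logarithmic divergence, the constant $\beta$ must be extracted via analytic continuation of the Hardy lattice sum $\sum_{k\in\mathbb{Z}^2_0}|k|^{-2s} = 4\zeta(s)\beta(s)$ near $s=1$: the residue at $s=1$ matches the coefficient of $\pi\log(1/\mu)$ in our formula, while the derivative at $s=1$ (which involves $\zeta(1)$ renormalized by Euler--Mascheroni, together with $\beta'(1)$ in closed form via the Chowla--Selberg / Kronecker limit formula) produces the finite constant $\beta = \pi\gamma + 4\beta'(1)$, and hence the closed expression \eqref{tor.beta}. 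Any routine way to double-check the numerical value is to compute $\tilde G(\mu)$ at one convenient value of $\mu$ both from its defining series and from \eqref{tor.bg00}.

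Finally for stage (v), substituting $\mu = \lambda^{-1}$ into \eqref{tor.bg00} and multiplying by $\mu$ gives the displayed equality in \eqref{tor.bg2}. The inequality is immediate from the fact that $K_0(z) > 0$ for all $z > 0$, so the Bessel sum is strictly positive and its subtraction yields the desired upper bound. The exponentially small remainder estimate follows from the standard asymptotic $K_0(z) \sim \sqrt{\pi/(2z)}\,e^{-z}$ as $z \to \infty$: the dominant terms are the four lattice points with $|k|=1$, which contribute $O\bigl(\lambda^{-1/4}e^{-2\pi\lambda^{1/2}}\bigr)$, comfortably absorbed into the claimed $O\bigl(e^{-\pi\lambda^{1/2}}\bigr)$, and the remaining terms decay strictly faster.
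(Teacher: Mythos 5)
Your five-stage plan reproduces exactly the paper's route: differentiate $\tilde G(\mu)$ once to regularize the lattice sum, apply Poisson summation to obtain the $K_1$ series, integrate back via $K_0' = -K_1$, pin down the integration constant through the Hardy lattice formula $\sum_{k\in\mathbb{Z}^2_0}|k|^{-2s}=4\zeta(s)\beta(s)$ as in \cite{Zelik}, and conclude the inequality from positivity of $K_0$ together with its exponential decay. The details you supply (the 2D Hankel transform $\mathcal F\bigl((1+|x|^2)^{-2}\bigr)(\xi)=\tfrac{|\xi|}{2}K_1(|\xi|)$, the chain-rule identity for $\frac{d}{d\mu}K_0(2\pi\mu^{-1/2}|k|)$, and the bound $O(\lambda^{-1/4}e^{-2\pi\lambda^{1/2}})$ from the $|k|=1$ shell) are all correct and merely fill in steps the paper leaves implicit.
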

Note that the choice $n=m=2$ and $l=1$ corresponds to the limit case in
\eqref{2.int} where the extra logarithmic term appears and which is
usually called Brezis-Gallouet inequality (see \cite{BG}). Although the
condition \eqref{2.dim} is formally violated, most part of the theory
developed above works in that limit case as well. In particular, as
shown in \cite{Zelik} based on \eqref{tor.bg2}, there exists a constant
$L>0$ such that
\begin{equation}\label{tor.bg3}
\Bbb V(D)\le\frac1{4\pi}\(\log D+\log(1+\log D)+L\),\ \ D\ge1,
\end{equation}
where the constant $\frac1{4\pi}$ is sharp
and the numerically found value of $L=2.15627$. This leads to the
improved Brezis-Galloet inequality in the form
\begin{equation}\label{tor.bg4}
\|u\|^2_{C(\Bbb T^2)}\le
\frac1{4\pi}\|\nabla u\|^2_{L^2(\Bbb T^2)}
\(\log\frac{\|\Delta u\|^2_{L^2(\Bbb T^2)}}{\|\nabla u\|^2_{L^2(\Bbb T^2)}}+
\log\(1+\log\frac{\|\Delta u\|^2_{L^2(\Bbb T^2)}}{\|\nabla u\|^2_{L^2(\Bbb T^2)}}\)+L\).
\end{equation}
Let us now consider the 3D case $n=3$ with $m=2$ and $l=1$. Then,
\begin{equation}\label{tor.3D1}
\tilde G(\mu)=\frac1{8\pi^3}\sum_{k\in\mathbb{Z}^3_0}
\frac1{|k|^2(1+\mu|k|^2)},\ \frac {d\tilde G(\mu)}{d\mu}=
-\frac1{8\pi^3}\sum_{k\in\mathbb{Z}^3_0}\frac1{(1+\mu|k|^2)^2}
\end{equation}
and, applying the Poisson summation formula~\eqref{Poisson} to the second sum, after the
computation of the Fourier transform,
namely, using (see~\cite{S-W})
\begin{equation}\label{Four_Stein}
\mathcal{F}\bigl(1/(1+x^2)^{2}\bigr)(\xi)=
\frac{\pi^2}{(2\pi)^{3/2}}e^{-|\xi|},
\end{equation}
 we end up with
\begin{equation}\label{tor.3D2}
\frac {d\tilde G(\mu)}{d\mu}=\frac1{8\pi^3}\(1-\frac{\pi^2}{\mu^{3/2}}\sum_{k\in\Bbb Z^3}e^{-2\pi\mu^{-1/2}|k|^{1/2}}\),
\end{equation}
and integrating this series in $\mu$, we arrive at
\begin{equation}\label{tor.3D3}
\tilde G(\mu)=\frac1{8\pi^3}\(2\pi^2\mu^{-1/2}+\beta_3+\mu-\pi
\sum_{k\in\mathbb{Z}^3_0}|k|^{-1/2}e^{-2\pi\mu^{-1/2}|k|^{1/2}}\),
\end{equation}
where $\beta_3$ is an integration constant which can be found numerically:
$$
\beta_3=-8.91363291758515127,
$$
see the next section for more details on how to compute it. Thus, we have proved the following lemma.
\begin{lemma} Let $n=3$, $m=2$ and $l=1$. Then, at least for $\lambda>0$,
the Green's function \eqref{tor.poi1} can be written as follows:
\begin{multline}\label{tor.3D4}
G(\lambda)=\frac1{8\pi^3\lambda}\(2\pi^2\lambda^{1/2}+\beta_3+\lambda^{-1}
-\pi\sum_{k\in\mathbb{Z}^3_0}
\frac{e^{-2\pi\lambda^{1/2}|k|^{1/2}}}{|k|^{1/2}}\)\le\\\le
\frac1{8\pi^3\lambda}\(2\pi^2\lambda^{1/2}+\beta_3+\lambda^{-1}\),
\end{multline}
where $\beta$ is defined in \eqref{tor.beta}. Moreover, the term
containing the sum  in the middle part of \eqref{tor.3D4} is
exponentially small (of order $O(e^{-\pi\lambda^{1/2}})$ as
$\lambda\to\infty$).
\end{lemma}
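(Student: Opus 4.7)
The proof follows the differentiation-integration scheme already used above for the two-dimensional case $n=2$, $m=2$, $l=1$, but now in three dimensions. Starting from the representation $G(\lambda)=\mu\tilde G(\mu)$ with $\mu=\lambda^{-1}$ given in~\eqref{tor.poi1}, I would first differentiate $\tilde G$ once in $\mu$ to kill the $|k|^{-2}$ singularity of the summand at $k=0$; since $m-l=1$, one differentiation suffices and produces the absolutely convergent series $-\frac{1}{8\pi^3}\sum_{k\in\mathbb{Z}^3_0}(1+\mu|k|^2)^{-2}$ displayed in~\eqref{tor.3D1}.

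To this differentiated series I would apply the Poisson summation formula~\eqref{Poisson} on $\mathbb{Z}^3$. The required Fourier-transform input is the three-dimensional identity~\eqref{Four_Stein}, and a rescaling $x\mapsto \mu^{1/2}x$ of the function $(1+|x|^2)^{-2}$ produces the factor $\mu^{-3/2}$ in front of the dual sum together with an exponential of the form $e^{-2\pi\mu^{-1/2}|\cdot|}$. Separating the $k=0$ terms from both sides accounts for the purely $\mu$-independent constant $\tfrac{1}{8\pi^3}$ in~\eqref{tor.3D2}, and the remaining lattice sum is exponentially small in $\mu^{-1/2}$. Integrating~\eqref{tor.3D2} in $\mu$ then recovers $\tilde G(\mu)$: the two elementary terms integrate to $2\pi^2\mu^{-1/2}$ and $\mu$ up to one additive constant of integration, which by definition is $\beta_3$, and the exponential series integrates termwise against the prefactor $\mu^{-3/2}$ to the stated form with the $|k|^{-1/2}$ weight. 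Multiplying by $\mu$ and translating back from $\mu$ to $\lambda$ yields the displayed equality of the lemma.

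The upper bound in the second line is then immediate from positivity of each term $|k|^{-1/2}e^{-2\pi\lambda^{1/2}|k|^{1/2}}$, which is subtracted with coefficient $-\pi$; and the bound $O(e^{-\pi\lambda^{1/2}})$ on the remainder follows by isolating the $|k|=1$ shell (which furnishes the slowest exponential) and majorizing the tail by a geometric series. The main technical point is that integration along $\mu\in(0,\infty)$ does not, in itself, pin down $\beta_3$ analytically, because there is no natural boundary at which $\tilde G$ takes a simple closed-form value; accordingly, $\beta_3$ must be fixed by a separate matching argument. Its numerical evaluation, via a Hardy-type lattice identity analogous to~\eqref{tor.beta} in the $n=2$ case, is the one piece of work I would defer to the following section, whereas everything else here is a routine application of Poisson summation and an elementary antiderivative.
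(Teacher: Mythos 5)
Your proposal takes essentially the paper's route step for step: one $\mu$-differentiation to regularize the $k=0$ summand, Poisson summation on $\mathbb{Z}^3$ via the kernel identity~\eqref{Four_Stein}, termwise antidifferentiation back to $\tilde G$ (introducing the single integration constant $\beta_3$), and positivity of the subtracted exponential series for the displayed upper bound. One small internal inconsistency worth correcting: you rightly derive the dual exponential $e^{-2\pi\mu^{-1/2}|k|}$ (linear in $|k|$), yet then assert the result has ``the $|k|^{-1/2}$ weight''; the antiderivative of $\mu^{-3/2}e^{-2\pi\mu^{-1/2}|k|}$ is $\tfrac{2}{2\pi|k|}e^{-2\pi\mu^{-1/2}|k|}$, so the correct weight is $|k|^{-1}$ with the exponent still $2\pi\mu^{-1/2}|k|$ — consistent with the closed form~\eqref{con.fin} for $\beta_3$. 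The $|k|^{1/2}$'s appearing in~\eqref{tor.3D2}--\eqref{tor.3D4} are evidently typographical, and your proof should carry $|k|$ and $|k|^{-1}$ throughout rather than match the misprint.
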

Thus, the Green's function $G(\lambda)$ satisfies the assumptions of
Proposition \ref{Prop6.as} with $\theta=1/2$ and
$g_1,g_2,g_3=(8\pi^3)^{-1}(2\pi^2,\beta_3,1)$, respectively.
Therefore,
\begin{equation}\label{tor.3D5}
\Bbb V(D)=\frac1{8\pi^3}
\(4\pi^2D^{1/2}+2\beta_3-\frac{\beta_3^2-4\pi^2}{2\pi^2}D^{-1/2}\)+
O(D^{-1}),
\end{equation}
where the third term is negative, which suggests the following inequality:
\begin{equation}\label{tor.3D6}
\|u\|_{\infty}^2\le \frac1{2\pi}\|\nabla u\|_{L_2(\Bbb T^3)}\|\Delta u\|_{L_2(\Bbb T^3)}-\frac{-\beta_3}{4\pi^3}\|\nabla u\|^2_{L_2(\Bbb T^3)},
\end{equation}
where all constants are sharp. However, up to the moment, we have
checked this inequality only for large $D$. The next lemma shows that
it holds for all $D\ge1$.
\begin{lemma}\label{L:3.6} The inequality \eqref{tor.3D6} holds for every
$u\in H^2(\Bbb T^3)$ with zero mean.
\end{lemma}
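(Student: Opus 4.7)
The plan is to apply Theorem~\ref{Th.var} together with the explicit upper bound on the Green's function from~\eqref{tor.3D4}, closely following the scheme used for~\eqref{tor.1.1} in the proof of Theorem~\ref{Th.tor.1.1}. For our parameters $n=3$, $m=2$, $l=1$ one has $\theta=1/2$, and the smallest eigenvalue of $B^{-1/2}AB^{-1/2}$ on $\bar H$ equals $\lambda_0=1$, attained on the Fourier modes with $|k|=1$. By symmetry $G_\lambda(\xi,\xi)=G(\lambda)$, hence by~\eqref{2.Vint} and~\eqref{green.var} the inequality~\eqref{tor.3D6} is equivalent to
\begin{equation*}
\mathbb V(D)\,\le\,\frac{\sqrt{D}}{2\pi}+\frac{\beta_3}{4\pi^3}\quad\text{for all}\quad D\ge 1,
\end{equation*}
and it suffices to exhibit, for each such $D$, a single admissible $\lambda=\lambda_*(D)$ for which $(\lambda_*+D)\,G(\lambda_*)$ is bounded by the right-hand side.

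I would take $\lambda_*(D)=D-\tfrac12$, which is admissible for $D\ge 1$ (then $\lambda_*\ge\tfrac12>0$), is the direct analogue of the substitution used in~\eqref{tor.var} for the one-dimensional case, and reproduces the leading term of the asymptotic~\eqref{lambda(D)} (which here reads $\lambda(D)=D+(\beta_3/\pi^2)\sqrt D+\dots$). Bounding $G(\lambda_*)$ by dropping the positive exponentially small Bessel-type sum in~\eqref{tor.3D4}, multiplying through by $8\pi^3\lambda_*$, and using the identity $\lambda_*+D-2\sqrt{\lambda_*D}=(\sqrt{\lambda_*}-\sqrt D)^2$, the required estimate reduces to the explicit scalar inequality
\begin{equation*}
P(D):=2\pi^2\sqrt{D-\tfrac12}\bigl(\sqrt{D-\tfrac12}-\sqrt D\bigr)^2+\frac{\beta_3}{2}+1+\frac{D}{D-\tfrac12}\,\le\,0,\quad D\ge 1.
\end{equation*}

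To finish, I would verify $P\le 0$ by elementary calculus. The summand $D/(D-\tfrac12)=1+1/(2D-1)$ decreases from $2$ to $1$ on $[1,\infty)$. Using $\sqrt D-\sqrt{D-\tfrac12}=1/(2(\sqrt D+\sqrt{D-\tfrac12}))$, the first summand rewrites as $\pi^2\sqrt{D-\tfrac12}/(2(\sqrt D+\sqrt{D-\tfrac12})^2)$; its reciprocal simplifies to $2\sqrt D+\sqrt{D-\tfrac12}+D/\sqrt{D-\tfrac12}$, whose derivative is manifestly positive for $D\ge 1$, so the first summand is also decreasing and tends to $0$. Hence $P$ itself is monotonically decreasing on $[1,\infty)$, and it suffices to check
\begin{equation*}
P(1)=\frac{\pi^2(3-2\sqrt2)}{\sqrt2}+\frac{\beta_3}{2}+3\,\approx\,4.197+\frac{\beta_3}{2}\,<\,0,
\end{equation*}
which holds provided $\beta_3<-8.40$ (approximately); the numerical value $\beta_3\approx -8.9136$ safely satisfies this. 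The one non-routine ingredient, and hence the main obstacle, is thus certifying this concrete bound on the integration constant $\beta_3$; this reduces to truncating the super-exponentially convergent series in~\eqref{tor.3D3} defining $\beta_3$ to sufficient accuracy, which is straightforward. Once $P\le 0$ is established, sharpness of both constants in~\eqref{tor.3D6} is automatic from Proposition~\ref{Prop6.as} applied to~\eqref{tor.3D5}, since any improvement of either would contradict the first two terms of the asymptotic expansion of $\mathbb V(D)$.
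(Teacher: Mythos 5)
Your proof is correct and follows essentially the same route as the paper: invoke Theorem~\ref{Th.var}, substitute $\lambda_*=D-\tfrac12$, bound $G$ via the inequality in~\eqref{tor.3D4}, and reduce to an explicit scalar inequality verified at its worst case $D=1$. The only cosmetic difference is the endgame: the paper singles out the term $4\pi^2\sqrt D+2\beta_3$ and bounds each remaining piece by its supremum on $[1,\infty)$, whereas you complete the square to get $(\sqrt{D-1/2}-\sqrt D)^2$ and show the resulting function $P(D)$ is monotone decreasing so that $P(D)\le P(1)<0$; your bound $P(1)\approx-0.26$ is in fact a bit sharper than the paper's $\approx-0.023$ because you avoid one additional estimate, but both close the argument for the given value of $\beta_3$.
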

\begin{proof} We only need to check that the inequality
\begin{equation}\label{tor.3D7}
\Bbb V(D)\le\frac1{8\pi^3}\(4\pi^2D^{1/2}+2\beta_3\)
\end{equation}
holds for all $D\ge1$. To this end, we will again use  the variational
representation \eqref{tor.var} where
we put $\lambda=D-\frac12$ and inequality \eqref{tor.3D4}.
Singling out the term $4\pi^2D^{1/2}+2\beta_3$ this gives
\begin{multline}
8\pi^3\Bbb V(D)\le
\(1+\frac D{D-1/2}\)\(2\pi^2 \sqrt{D-1/2}+\beta_3+\frac1{D-1/2}\)=\\
=4\pi^2D^{1/2}+2\beta_3+
\frac1{2D-1}
\(\pi^2\frac{\sqrt{4D-2}}{\sqrt{4D(4D-2)}+4D-1}+\beta_3+4+\frac2{2D-1}\),
\end{multline}
and we only need to check that the last term in the right-hand side
is always negative. Indeed,
$$
\pi^2\frac{\sqrt{4D-2}}{\sqrt{4D(4D-2)}+4D-1}<
\pi^2\frac{\sqrt{4D-2}}{\sqrt{4D(4D-2)}+4D-2}=
\pi^2\frac{1}{\sqrt{4D}+\sqrt{4D-2}}\le\frac{\pi^2}{2+\sqrt2}
$$
if $D\ge1$ and, analogously, $\frac2{2D-1}\le2$. Then
$$
\frac{\pi^2}{2+\sqrt2}+\beta_3+6=-0.022892<0
$$
and the lemma is proved.
\end{proof}
\paragraph{Computing the integration constants}\mbox{}\\
As we have seen above, in the case $l>0$ the Poisson summation formula
allows to find the asymptotic expansions of the Green's function
$G(\lambda)$ only up to some integration constants and the direct
computation of that constants is a non-trivial task since the series
\eqref{tor.green} converge not sufficiently fast, especially for big
$\lambda$. Thus, it looks reasonable to find better (e.g., exponentially)
convergent series for computing them. In the present section, we give an
explicit formula for the sum \eqref{tor.green} in the particular case
$n=3$, $m=2$, $l=1$ considered above in terms of the integrals of the
so-called Jacobi theta functions and using the known relations for the theta functions, we find the formula for the integration constant $\beta_3$ through the very fast convergent and convenient for computations series.
Note also that, although we restrict ourselves to consider only that
case, the presented method has a general nature and is applicable
for computing other integration constants including the case of
anisotropic tori, etc.

 We consider the Jacobi theta function (see, for instance, \cite{Edw})

 \begin{equation}\label{con.jac}
\theta_3(q)=\sum_{k\in\Bbb Z}q^{k^2}.
 \end{equation}
Then, the following identity  holds which is crucial in what follows:
\begin{equation}\label{con.iid}
 \theta_3(e^{-\pi t})=\frac1{\sqrt{t}}\cdot\theta_3(e^{-\pi t^{-1}}),
\end{equation}
and which follows from the Poisson summation formula~\eqref{Poisson} with
$n=1$, $f(x)=e^{-x^2/2}$, $\widehat{f}(\xi)=e^{-\xi^2/2}$,
and $\mu=\sqrt{\frac t{2\pi}}$.
%see \cite{Edw} for the details.
\par
Using also the obvious relation
$$
\frac1{k(1+\mu k)}=\int_0^\infty (1-e^{-t/\mu})e^{-kt}\,dt,
$$
we transform \eqref{tor.3D1} as follows
\begin{equation*}
8\pi^3\tilde G(\mu)=\int_0^\infty(1-e^{-t/\mu})\sum_{k\in\mathbb{Z}^3_0}
[e^{-t}]^{k_1^2}[e^{-t}]^{k_2^2}[e^{-t}]^{k_3^2}\,dt=\int_0^\infty
(1-e^{-t/\mu}) \([\theta_3(e^{-t})]^3-1\)\,dt.
\end{equation*}
Splitting the interval of integration
$\R_+=[0,1]\cup[1,\infty)$ and using \eqref{con.iid}, we arrive at
\begin{multline}\label{con.huge}
8\pi^3\tilde G(\mu)=\int_1^\infty(1-e^{-t/\mu})
\([\theta_3(e^{-t})]^3-1\)\,dt+\\
+\int_0^1(1-e^{-t/\mu})
\(\pi^{3/2}t^{-3/2}\theta_3(e^{-\pi^2t^{-1}})^3-1\)\,dt\\=
\int_1^\infty(1-e^{-t/\mu})
\([\theta_3(e^{-t})]^3-1\)\,dt+\pi^{3/2}\int_1^\infty
(1-e^{-1/(t\mu)})t^{-1/2}
\(\theta_3(e^{-\pi^2t})^3-1\)\,dt-\\
-\int_0^1(1-e^{-t/\mu})\,dt+\pi^{3/2}\int_0^1t^{-3/2}(1-e^{-t/\mu})\,dt.
\end{multline}
Now it is not difficult to find the asymptotic expansions for
$\tilde G(\mu)$ as $\mu\to0+$. Indeed, as elementary calculations show,
\begin{multline}
\int_0^1t^{-3/2}(1-e^{-t/\mu})\,dt=\mu^{-1/2}
\left(\int_0^\infty t^{-3/2}(1-e^{-t})\,dt-
\int_{1/\mu}^\infty t^{-3/2}(1-e^{-t})\,dt\right)=\\=
\mu^{-1/2}(2\sqrt\pi-\int_{1/\mu}^\infty t^{-3/2}\,dt+O(e^{-1/\mu}))=2\sqrt\pi\mu^{-1/2}-2+o_\mu(1).
\end{multline}
Using now that $\theta_3(x)-1=O(x)$ as $x\to0$ we can pass to the limit
$\mu\to0$ in the
integrals in~\eqref{con.huge} containing $\theta_3$
and comparing with  \eqref{tor.3D3}, we see that
\begin{equation}\label{con.beta}
\beta_3=-1-2\pi^{3/2}+
\int_1^\infty\(\theta_3(e^{-t})^3-1\)\,dt+\pi^{3/2}\int_1^\infty
t^{-1/2}\(\theta_3(e^{-\pi^2t})^3-1\)\,dt.
\end{equation}
We do not know whether or not the integral in \eqref{con.beta} can be
computed in closed from, however, it is  convenient for high precision
numerical computation of the constant $\beta_3$. Indeed, expanding back
the Jacobi function in Taylor series and using that
$$
\int_1^\infty t^{-1/2}e^{-k t}\,dt=\sqrt\pi\frac{\operatorname{erfc}(\sqrt k)}{\sqrt k},
$$
where $\operatorname{erfc}(x):=\frac2{\sqrt\pi}\int_x^\infty e^{-t^2}\,dt$, we end up with
\begin{equation}\label{con.fin}
\beta_3=-1-2\pi^{3/2}+
\sum_{k\in\mathbb{Z}^3_0}\(\frac{e^{-|k|^2}}{|k|^2}+
{\pi}\frac{\operatorname{erfc}(\pi|k|)}{|k|}\).
\end{equation}
 We see that the rate of convergence of the series is super-exponential and using  Maple to compute it, we get the desired value
$$
\beta_3=-8.91363291758515127
$$
which has been used in the previous section.

%{\phantom{eggoggggggggggggggggggggggggggggggggggggggggggggggggggggggggggggggggggggggggggggggggggggggggggg}}
%%\newpage
%\par
%\vskip10pt
%{\bf Here we may briefly recall previous results, give the "new" method of summation through Jacobi theta and consider the inequality}
%$$
%\|u\|_{L^\infty}\le C_n\|u\|_{H^1}^\theta\|u\|_{H^n}^{1-\theta}
%$$
%{\bf in 3D. Here we need the integration constant which I could not compute before and by this reason this is not done in the previous paper. In addition, we may do here some 1D cases with fractional derivatives (as far as I remember you have some calculations about it)? Also, we may do anisotropic case as well here, but up to the moment, we have no calculations here.}

\subsubsection{Inequalities on  spheres}
We  recall the basic facts concerning the spectrum of
the Laplace-Beltrami
operator on the $(d-1)$-dimensional sphere
$\mathbb{S}^{d-1}$:
$$
-\Delta Y_n^k=\Lambda_n Y_n^k,\quad
k=1,\dots,k_d(n),\quad n=1,2,\dots.
$$
Here the $Y_n^k$ are the orthonormal spherical harmonics.
Each eigenvalue
$$
\Lambda_n=n(n+d-2)
$$
has multiplicity
$$
k_d(n)=\frac{2n+d-2}{n}\binom{n+d-3}{n-1}.
$$
In particular, for $d=3,4$ we have
\begin{equation}\label{eig_mult}
\aligned
&\mathbb{S}^2:\ \Lambda_n=n(n+1),\ k_3(n)=2n+1,\\
&\mathbb{S}^3:\ \Lambda_n=n(n+2),\ k_4(n)=(n+1)^2.
\endaligned
\end{equation}
The following identity is essential \cite{S-W}:
for any $\xi\in\mathbb{S}^{d-1}$
\begin{equation}\label{identity}
\sum_{l=1}^{k_d(n)}Y_n^l(\xi)^2=\frac{k_d(n)}{\sigma(d)},
\end{equation}
where $\sigma(d)=2\pi^{d/2}/\Gamma(d/2)$ is the surface area of $S^{d-1}$.

Finally, since the kernel of $\Delta$ is the
one-dimensional subspace of constants,  throughout
below we assume orthogonality to constants:
$$
\bar H^s(\mathbb{S}^{d-1})=
\{\varphi\in  H^s(\mathbb{S}^{d-1}),\ (\varphi,1)=0\}.
$$

\paragraph{ Inequalities on  spheres: $\mathbb{S}^2$}\mbox{}
We consider below  applications of the general theory to
inequalities on the 2D sphere with $A=(-\Delta)^{m}$,
$m>1/2$, and
$B=I$, where $\Delta$ is the Laplace--Beltrami operator.
The first positive eigenvalue of $-\Delta$ is $2$ and
in accordance with~\eqref{3.lambda} we consider the
operator
$$
\mathbb{A}(\lambda)=(-\Delta)^{m}+\lambda\,I,\ \
\lambda>-2^m.
$$
Its Green's function $G_\lambda(x,\xi)$ is
$$
G_\lambda(x,\xi)=\sum_{n=1}^\infty\sum_{k=1}^{2n+1}
\frac{Y_n^k(\xi)Y_n^k(x)}{(n(n+1))^{m}+\lambda},
\qquad x,\xi\in\mathbb{S}^2,
$$
and thanks to~\eqref{identity} the function
$G_\lambda(\xi,\xi)$ is independent of $\xi$ and is given
by
\begin{equation}\label{gS2}
G(\lambda)=
G_\lambda(\xi,\xi)=
\sum_{n=1}^\infty\sum_{k=1}^{2n+1}
\frac{Y_n^k(\xi)^2}{(n(n+1))^{m}+\lambda}=\frac1{4\pi}
\sum_{n=1}^\infty
\frac{2n+1}{(n(n+1))^{m}+\lambda}\,.
\end{equation}
Setting
$
\mu:=\lambda^{-1/m}
$
we have
$$
G(\lambda)=\frac1{4\pi}\mu^m\sum_{n=1}^\infty
(2n+1) \varphi(\mu n(n+1)),
$$
where
$$
\varphi(x)=\frac1{x^m+1}.
$$
Since $\varphi(0)=1$ and $\varphi'(0)=0$, Lemma~\ref{L:E-M}
below gives
\begin{equation}\label{gS2as}
G(\lambda)=\frac1{4\pi}
\left[
\lambda^{-\theta}K-\frac23\lambda^{-1}+
0\cdot\lambda^{-(2-\theta)}
\right]+O(\lambda^{-(3-2\theta)}),
\end{equation}
where $\theta=(m-1)/m$ and
$$
K=\int_0^\infty \varphi(x)dx=\frac{(1-\theta)\pi}{\sin\theta\pi}.
$$
In other words, we have shown that for
$f_\xi(\lambda)=G(\lambda)$
the asymptotic expansion \eqref{6.expand} holds with
the following
$g_1$, $g_2$, $g_3$ independent of $\xi\in \mathbb{S}^2$:
\begin{equation}\label{g1g2g3}
g_1=\frac K{4\pi}=\frac{1-\theta}{4\sin\theta\pi},
\qquad g_2=-\frac1{6\pi},
\qquad g_3=0.
\end{equation}

Next, differentiating~\eqref{gS2} and again using
Lemma~\ref{L:E-M} we obtain the asymptotic expansion for
$g_\xi(\lambda)=g(\lambda)$:
\begin{multline*}
g(\lambda)=-f'(\lambda)=-G'(\lambda)=\frac1{4\pi}
\sum_{n=1}^\infty
\frac{2n+1}{((n(n+1))^{m}+\lambda)^2}=
\frac1{4\pi}\mu^{2m}
\sum_{n=1}^\infty
\frac{2n+1}{((\mu n(n+1))^{m}+1)^2}=\\
\frac1{4\pi}\left[
\lambda^{-(1-\theta)}\theta K-\frac23\lambda^{-2}
+0\cdot\lambda^{-(3-\theta)}
\right]+O(\lambda^{-(4-\theta)}),
\end{multline*}
where we used
$$
\int_0^\infty \varphi(x)^2dx=
\int_0^\infty \frac{dx}{(x^m+1)^2}=\theta K.
$$
This justifies differentiation of
 the asymptotic formula for $f(\lambda)$, as required in
Proposition~\ref{Prop6.as}. Applying it we obtain as a result
 the asymptotics of the function $\mathbb V(D)$ on
$\mathbb{S}^2$.
\begin{theorem}\label{T:S2as}
The function $\mathbb V(D)$ solving
on $\mathbb{S}^2$ the extremal problem
\begin{equation}\label{S2.max}
 \mathbb V(\xi,D):=\sup\bigg\{|u(\xi)|^2:\ \ u\in \bar H^m(\mathbb{S}^2),\ \ \|u\|^2=1,
 \ \ \|(-\Delta)^{m/2}u\|^2=D\bigg\}
 \end{equation}
is independent of $\xi\in\mathbb{S}^2$ and has the
following asymptotic behavior as $D\to\infty$:
\begin{equation}\label{S2.max-as}
 \mathbb V(D)=
 \frac1{4\sin\theta\pi}\left(\frac{1-\theta}\theta\right)^\theta
D^{1-\theta}-\frac1{6\pi\theta}-
\frac{\sin\theta\pi}{18\pi^2}\frac{(1-\theta)^{1-\theta}}{\theta^{3-\theta}}
D^{\theta-1}+O(D^{-(2-2\theta)}),
 \end{equation}
 where $\theta=(m-1)/m$.
\end{theorem}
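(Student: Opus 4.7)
The plan is to invoke Proposition~\ref{Prop6.as} directly with the data already assembled in the paragraphs preceding the theorem, and to verify that the few remaining routine items (independence of $\xi$, validity of the differentiated expansion, algebraic simplification) indeed go through.

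First, I would note that the independence of $\mathbb V(\xi,D)$ from $\xi\in\mathbb{S}^2$ is a formal consequence of the symmetry discussion in~\eqref{3.symmetry}: because $A=(-\Delta)^m$ and $B=I$ are $SO(3)$-invariant, $G_\lambda(\xi,\xi)$ depends only on $\lambda$, and both $\|G_\lambda(\cdot,\xi)\|_B$ and $\|G_\lambda(\cdot,\xi)\|_A$ inherit this independence, so Theorem~\ref{Th4.main} yields an $\xi$-independent $\mathbb V(D)$. This reduces the problem to establishing the asymptotic expansion.

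Next, I would assemble the hypothesis of Proposition~\ref{Prop6.as}. The expansion~\eqref{gS2as}, obtained from Lemma~\ref{L:E-M} applied to the series~\eqref{gS2}, shows that $f_\xi(\lambda)=G(\lambda)$ has exactly the form~\eqref{6.expand} with
\[
g_1=\frac{1-\theta}{4\sin\theta\pi},\qquad g_2=-\frac1{6\pi},\qquad g_3=0,\qquad \theta=\frac{m-1}{m}.
\]
The admissibility of differentiating this expansion, which Proposition~\ref{Prop6.as} implicitly requires, is checked by applying Lemma~\ref{L:E-M} a second time to the termwise derivative series, giving precisely the expansion of $g_\xi(\lambda)=-f'_\xi(\lambda)$ predicted by differentiating~\eqref{6.expand} (this is the computation already outlined right before the theorem statement, using $\int_0^\infty\varphi(x)^2\,dx=\theta K$).

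Finally, I would substitute these values of $g_1,g_2,g_3$ into formula~\eqref{6.mist}. The leading coefficient simplifies as
\[
g_1 S=\frac{1-\theta}{4\sin\theta\pi}\cdot\frac{1}{\theta^\theta(1-\theta)^{1-\theta}}=\frac1{4\sin\theta\pi}\Bigl(\frac{1-\theta}\theta\Bigr)^{\!\theta},
\]
the constant term becomes $g_2/\theta=-1/(6\pi\theta)$, and with $g_3=0$ the correction term reduces to
\[
-\tfrac12 S^{-1}\frac{g_2^2(1-\theta)}{\theta^3 g_1}\,D^{\theta-1}=-\frac{\sin\theta\pi}{18\pi^2}\,\frac{(1-\theta)^{1-\theta}}{\theta^{3-\theta}}\,D^{\theta-1}.
\]
Collecting these three terms gives~\eqref{S2.max-as}, and the remainder order $O(D^{-(2-2\theta)})$ follows from tracking the next term in the Euler--Maclaurin expansion of $f$ and $g$ through the manipulations in the proof of Proposition~\ref{Prop6.as}.

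There is essentially no analytic obstacle here: the real work was done in deriving~\eqref{gS2as} via Lemma~\ref{L:E-M} and in proving Proposition~\ref{Prop6.as}. The only item requiring a bit of care is the simplification $g_1 S=\tfrac1{4\sin\theta\pi}((1-\theta)/\theta)^\theta$, which depends on the identity $(1-\theta)\cdot(1-\theta)^{-(1-\theta)}=(1-\theta)^\theta$ and which I would write out explicitly so the reader can match the output of Proposition~\ref{Prop6.as} with the form displayed in~\eqref{S2.max-as}.
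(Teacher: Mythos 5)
Your proposal is correct and follows exactly the route the paper itself takes: the paper's proof of Theorem~\ref{T:S2as} consists precisely of the preparatory paragraphs deriving~\eqref{gS2as} and~\eqref{g1g2g3} via Lemma~\ref{L:E-M}, verifying the differentiated expansion for $g(\lambda)$ with the identity $\int_0^\infty\varphi(x)^2\,dx=\theta K$, and then invoking Proposition~\ref{Prop6.as}. Your algebraic simplifications of the three coefficients agree with the displayed formula~\eqref{S2.max-as}, and your remark that the sharper $O(D^{-(2-2\theta)})$ remainder comes from the $O(\lambda^{-(3-2\theta)})$ error in~\eqref{gS2as} rather than the generic $o$-bound of Proposition~\ref{Prop6.as} is the right observation.
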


We now apply the asymptotics of $\mathbb V(D)$ obtained
above to multiplicative  inequalities with remainder terms
on $\mathbb{S}^2$. By Theorem~\ref{Th4.main1} the sharp
constant $K=K_\theta$ in the classical multiplicative
inequality
\begin{equation}\label{classS2}
u(\xi)^2\le\|u\|_\infty^2\le
K_\theta\|u\|^{2\theta}\|(-\Delta)^mu\|^{2(1-\theta)}
\end{equation}
is given by
\begin{equation}\label{K_theta}
K_\theta=\frac1{\theta^\theta(1-\theta)^{1-\theta}}\sup_{\lambda\ge0}
\lambda^\theta G(\lambda)
=
\frac1{\theta^\theta
(1-\theta)^{1-\theta}}\sup_{\lambda\ge0}
\frac1{4\pi}\lambda^\theta\sum_{n=1}^\infty
\frac{2n+1}{(n(n+1))^m+\lambda}.
\end{equation}
We note that~\eqref{K_theta} was obtained in~\cite{I98JLMS}
by a somewhat similar but less general argument than the one used in
Theorem~\ref{Th4.main1}. It was also shown there that for
(integer) $m$, $2\le m\le 7$, we in fact have that
the supremum is attained at infinity
\begin{equation}\label{K_theta_supS2}
\sup_{\lambda\ge0}\lambda^\theta G(\lambda)=
\lim_{\lambda\to\infty}\lambda^\theta G(\lambda)=
\int_0^\infty\frac{dx}{1+x^m},
\end{equation}
which gives
$$
K_\theta=\frac1{4\sin\theta\pi}\left(\frac{1-\theta}\theta\right)^\theta
$$
and, equivalently,
\begin{equation}\label{S2lsec1}
 \mathbb V(D)= \mathbb V_m(D)<
\frac1{4\sin\theta\pi}
\left(\frac{1-\theta}\theta\right)^\theta D^{1-\theta}
\end{equation}
for $2\le m\le 7$. However, for larger $m$'s the supremum
in~\eqref{K_theta_supS2} is attained at a finite point
$\lambda_*<\infty$. An explanation of this phenomenon
for the torus has been given in~\cite{Zelik}; below we
consider the case of the sphere $\mathbb{S}^2$.
\begin{lemma}\label{L:S2-oscillations}
For all sufficiently large $m$ the function
$h(\lambda):=\lambda^\theta G(\lambda)$ attains a global maximum
at a finite point $\lambda_*$ and $h(\lambda_*)>h(\infty)$.

\end{lemma}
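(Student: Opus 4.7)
The strategy is to exhibit a specific trial value $\lambda_0$ with $h(\lambda_0)>h(\infty)=K/(4\pi)$. From the asymptotics~\eqref{gS2as} together with~\eqref{g1g2g3} one reads off
$$
h(\lambda)=\frac{K}{4\pi}-\frac{1}{6\pi}\,\lambda^{\theta-1}+O(\lambda^{-(2-2\theta)}),
$$
and since $\theta-1<0$ the limit $K/(4\pi)$ is approached monotonically from below as $\lambda\to\infty$. Combined with $h(0)=0$, continuity of $h$ on $[0,\infty)$, and the boundedness provided by Theorem~\ref{Th4.main1}, it follows that once $h(\lambda_0)>K/(4\pi)$ is established at a single finite $\lambda_0$, the supremum of $h$ on $[0,\infty)$ is automatically attained at some finite point $\lambda_*$ satisfying $h(\lambda_*)\ge h(\lambda_0)>h(\infty)$.

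First I would bound $G(\lambda)$ from below by keeping only the $n=1$ term of the series~\eqref{gS2}; positivity of all summands gives
$$
G(\lambda)\ge\frac{3}{4\pi\,(2^m+\lambda)},\qquad\text{hence}\qquad h(\lambda)\ge\frac{3}{4\pi}\cdot\frac{\lambda^\theta}{2^m+\lambda}.
$$
The right-hand side is analytically maximized over $\lambda>0$ at $\lambda_0=\tfrac{\theta}{1-\theta}\cdot 2^m=(m-1)\,2^m$, at which $2^m+\lambda_0=m\cdot 2^m$; substitution yields
$$
h(\lambda_0)\ge\frac{3\bigl((m-1)\,2^m\bigr)^{(m-1)/m}}{4\pi m\,2^m}=\frac{3(m-1)^{(m-1)/m}}{8\pi\,m}.
$$

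The remaining task is a large-$m$ comparison. Writing $(m-1)^{(m-1)/m}/m=(1-1/m)\cdot(m-1)^{-1/m}$ and using $(m-1)^{-1/m}\to 1$, one gets $h(\lambda_0)\ge\tfrac{3}{8\pi}+o(1)$, whereas
$$
\frac{K}{4\pi}=\frac{1-\theta}{4\sin\theta\pi}=\frac{1}{4m\sin(\pi/m)}\longrightarrow\frac{1}{4\pi}=\frac{2}{8\pi}.
$$
Since $3/(8\pi)>2/(8\pi)$, for all sufficiently large $m$ the strict inequality $h(\lambda_0)>K/(4\pi)=h(\infty)$ holds, which completes the argument. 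The main conceptual point is the recognition that the crudest possible estimate on $G$—retaining a single summand—already produces a factor-of-$3/2$ slack over $K/(4\pi)$, so that no refined Euler--Maclaurin expansion of the tail is required; everything else reduces to one differentiation and one elementary limit.
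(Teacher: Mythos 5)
Your proof is correct, and it takes a genuinely different and more elementary route than the paper's. The paper parametrizes $\lambda=\nu^{2m}$ and rewrites $h$ (up to the factor $1/(4\pi)$) as a trapezoidal Riemann sum $H(\nu)$ for $\int_0^\infty(1+x^m)^{-1}\,dx$ over the nonuniform mesh $a_n=(n-1)n/\nu^2$; exploiting that $\varphi(x)=(1+x^m)^{-1}$ is nearly a step function for large $m$, the authors pick $\nu_0=\sqrt2+1/100$ so the nodes straddle the jump and the trapezoidal rule overcounts, giving $H(\nu_0)=1.479+o_{m\to\infty}(1)$ versus $H(\infty)=1+o_{m\to\infty}(1)$. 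That geometric framing is deliberately reused to explain the oscillations of $H(\nu)$ that follow the lemma. You instead keep only the $n=1$ summand (a valid lower bound by positivity), maximize $3\lambda^\theta/(4\pi(2^m+\lambda))$ in closed form at $\lambda_0=(m-1)2^m$ — note this corresponds to $\nu_0=\sqrt2\,(m-1)^{1/(2m)}\to\sqrt2$, so it lands on essentially the same scale as the paper's trial point — and compare $\frac3{8\pi}(1+o(1))$ against $h(\infty)=\frac1{4m\sin(\pi/m)}=\frac2{8\pi}(1+o(1))$. This is shorter and entirely algebraic, trading away the paper's geometric insight. Two minor imprecisions that do not affect the argument: carrying $\lambda^\theta$ through \eqref{gS2as} gives a remainder $O(\lambda^{-3(1-\theta)})$ rather than $O(\lambda^{-(2-2\theta)})$, though both are $o(\lambda^{\theta-1})$ as needed; and the expansion shows only that $h(\lambda)<h(\infty)$ for all sufficiently large $\lambda$, not that the approach is monotone — but the weaker statement is the one you actually use to conclude that the supremum is attained at a finite $\lambda_*$.
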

\begin{proof}
 Setting in~\eqref{K_theta} $\lambda=\nu^{2m}$ we see
that up to a constant factor, $h(\lambda)$ is equal to
$$
H(\nu)=\nu^{2m-2}\sum_{n=1}^\infty\frac{2n+1}{\nu^{2m}+(n(n+1))^m}\,.
$$

We consider the following partitioning of the half-line
$x\ge 0$ by the points
$$
a_n\,=\,a_n(\nu)\,=\,\frac {(n-1)n}{\nu^2},\ \ n=1,\dots\,.
$$
Then a direct inspection shows that
\begin{equation}\label{H(nu)}
H(\nu)\,=\,
\frac 12 \varphi(a_2)(a_2-a_1)\,+\,\sum\limits_{n=2}^\infty
\frac {\varphi(a_n)+\varphi(a_{n+1})}2\,(a_{n+1}-a_n),
\end{equation}
where $\varphi(x)=1/(1+x^m)$, which looks like a step
function for large $m$: $\varphi(x)\approx 1$ for $x<1$ and
$\varphi(x)\approx 0$ for $x>1$.
In view of Lemma~\ref{L:E-M} below we have
$$
H(\infty):=
\lim_{\nu\to\infty}H(\nu)=
\int_0^\infty\varphi(x)\,dx=\frac{\pi/m}{\sin\pi/m}=1+o_{m\to\infty}(1).
$$
We fix a large $m$, and  set, say,  $\nu=\nu_0=\sqrt{2}+1/100$. Then
$$
\aligned
a_1(\nu_0)=0,\quad a_2(\nu_0)<1 (=0.986),
\quad a_3(\nu_0)\approx 3 (=2.958);\\
a_2(\nu_0)-a_1(\nu_0)=0.986;\quad a_3(\nu_0)-a_2(\nu_0)=1.972.
\endaligned
$$
Since $\varphi(a_2)=1+o_{m\to\infty}(1)$,
$\varphi(a_3)=o_{m\to\infty}(1)$, and the sum from $n=3$ to
$\infty$ in~\eqref{H(nu)} gives a contribution of the order
$o_{m\to\infty}(1)$, it follows that
$$
H(\nu_0)=\frac{0.986}2+\frac{1.972}2+o_{m\to\infty}(1)=
1.479+o_{m\to\infty}(1).
$$
Hence $H(\nu_0)>H(\infty)$, and the proof is complete.
\end{proof}

This argument also explains the initial oscillatory behavior of
$H(\nu)$ when the corresponding node $a_n(\nu)$ hits $1$ with the
increase of $\nu$. Figure~\ref{fig:S2_2-10} shows the monotone
behavior of $H(\nu)$ for $m=2$ and the initial oscillations of $H(\nu)$
for $m=10$. Observe that the first (and global) maximum
in the second case  is located near $\nu=\sqrt{2}$.

\begin{figure}[htb]
\centerline{\psfig{file=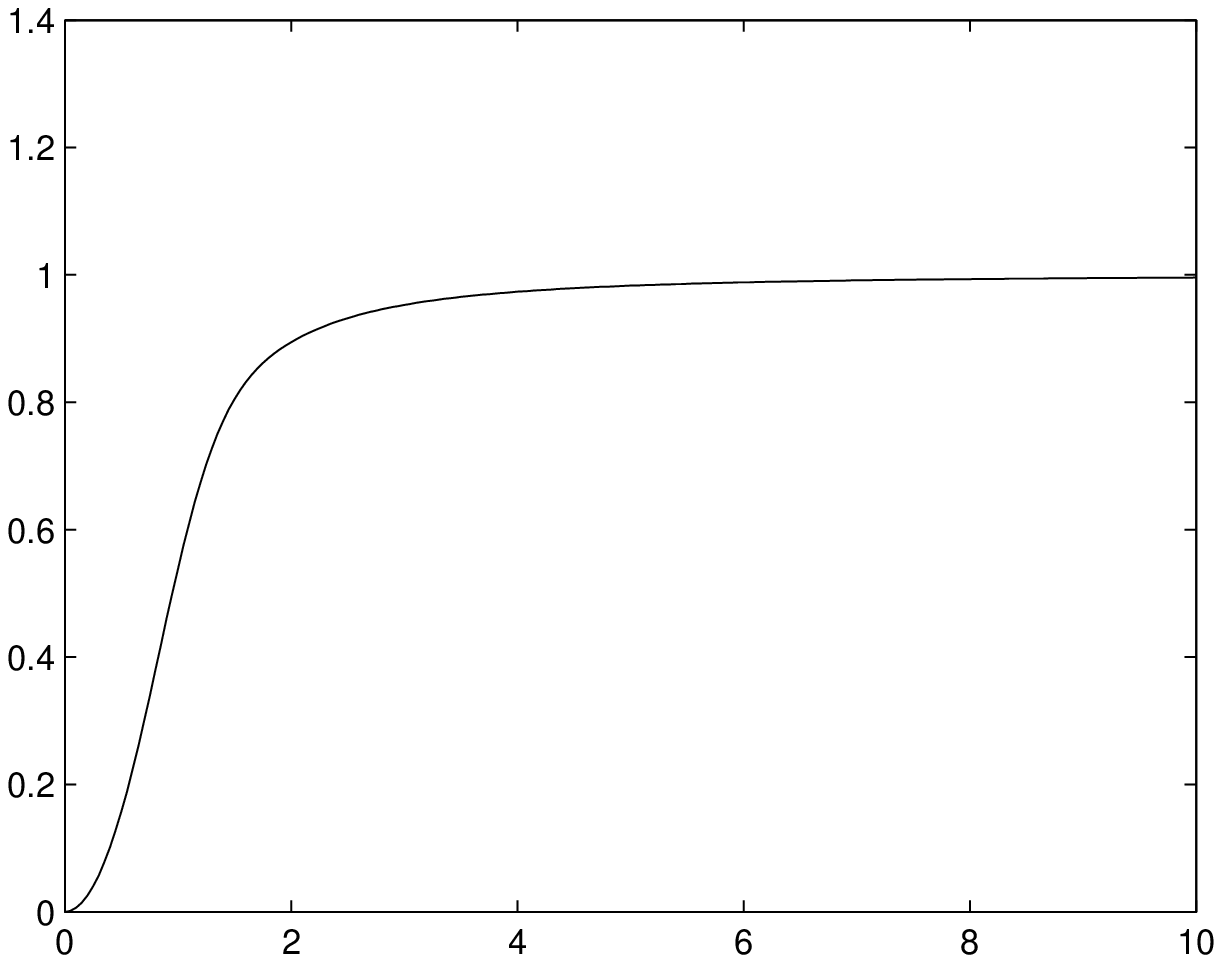,width=8cm,height=6cm,angle=0} %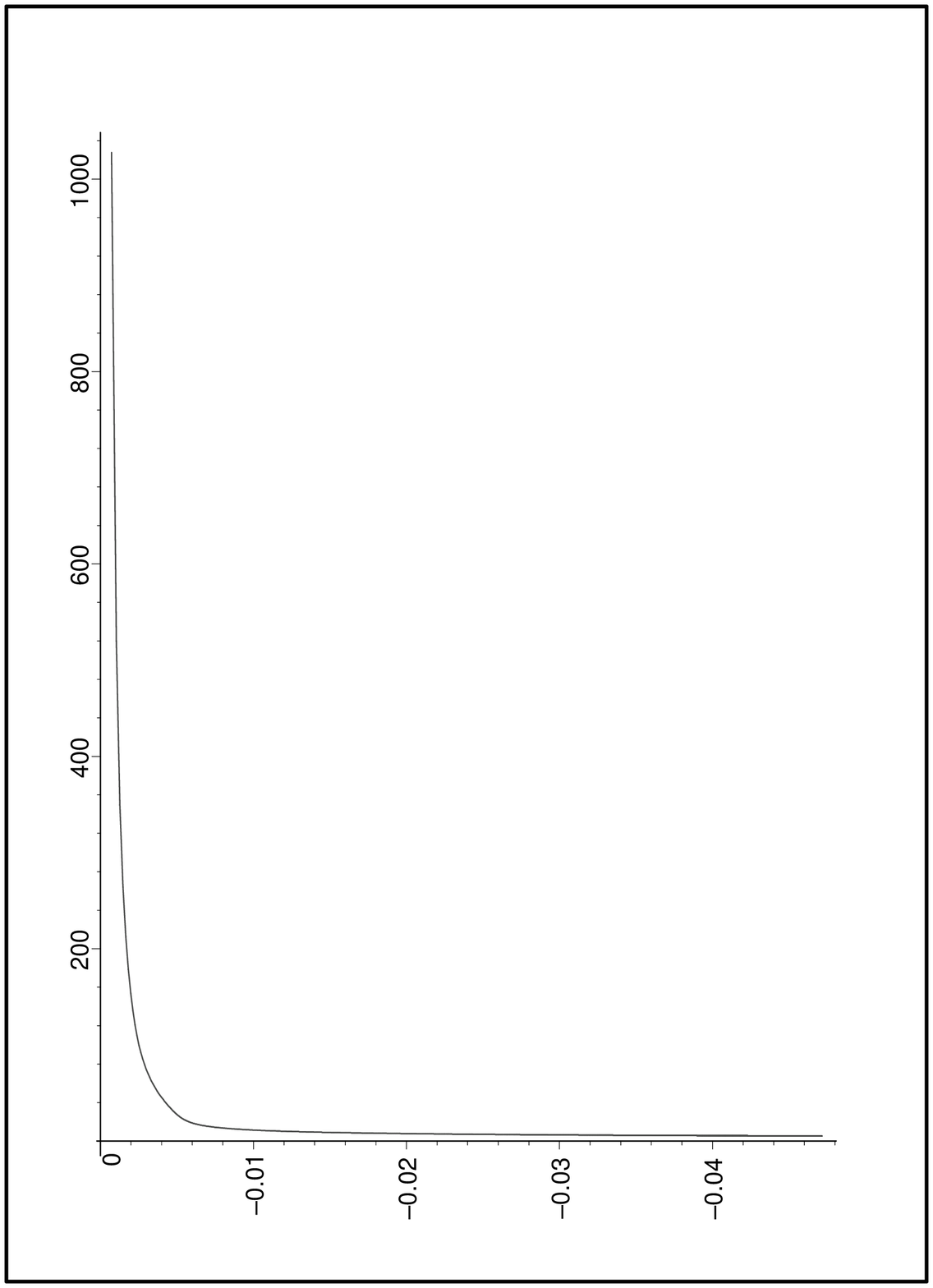
\psfig{file=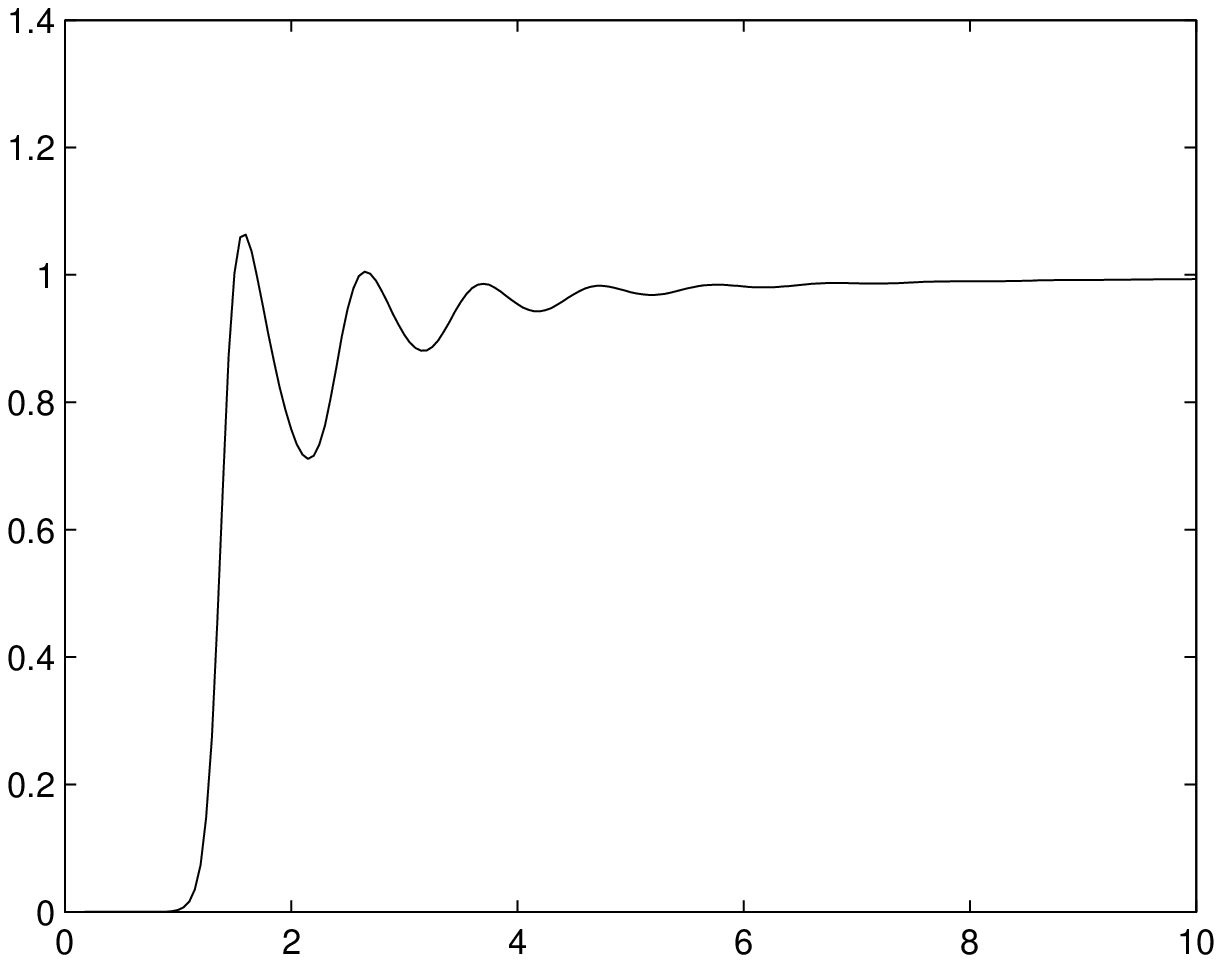,width=8cm,height=6cm,angle=0}} %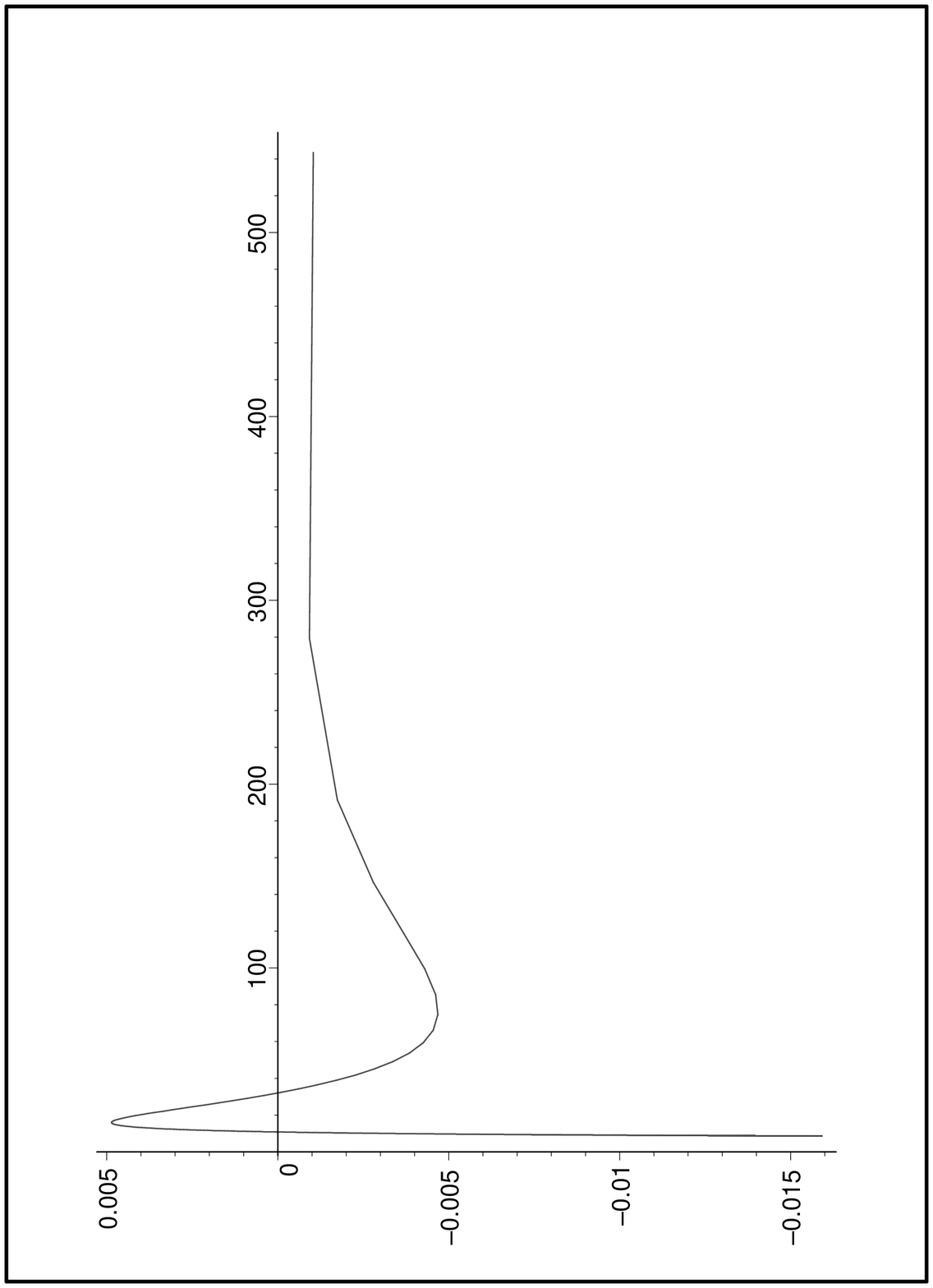
\caption{Graphs of the normalized function
$H(\nu)$
\ for $m=2$ and $m=10$}
\label{fig:S2_2-10}
\end{figure}

Turning to the inequalities with remainder terms we note that
the third term in~\eqref{S2.max-as} is negative, therefore the improved
inequality
\begin{equation}\label{S2lsec}
 \mathbb V(D)<
\frac1{4\sin\theta\pi}
\left(\frac{1-\theta}\theta\right)^\theta D^{1-\theta}-
\frac1{6\pi\theta}=:\overline {\mathbb V}(D)
\end{equation}
holds for all $D\ge D_0$, where $D_0$ is
sufficiently large. On the finite interval $[2^m,
D_0]$ computer calculations are reliable;
their results are shown in Fig.\ref{fig:S2l}.
\begin{figure}[htb]
\centerline{\psfig{file=S22.eps,width=6cm,height=5cm,angle=270}
\psfig{file=S23.eps,width=6cm,height=5cm,angle=270}
\psfig{file=,width=6cm,height=5cm,angle=270}
}
%\centerline{\psfig{file=S24.eps,width=6cm,height=4cm,angle=270}}
\caption{Graph of the function
${\mathbb V}(D)-\overline{\mathbb V}(D)$ for $m=2,3,4$.}
\label{fig:S2l}
\end{figure}

 In
particular, for $m=2$
$$\mathbb V(D)<\overline {\mathbb V}(D)\quad
\text{for all}\quad D,
$$
which proves the following result.
\begin{theorem}\label{T:S22} If $u\in\bar
H^2(\mathbb{S}^{2})$, then
\begin{equation}\label{S22corr}
\|u\|_\infty^2\le
\frac1{4}
\|u\|\|\Delta u\|-\frac1{3\pi}\|u\|^2,
\end{equation}where both constants are sharp and
no extremal functions exist.
\end{theorem}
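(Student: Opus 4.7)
The plan is to specialize the general framework developed in Part I to the case $\mathcal M=\mathbb S^2$, $A=(-\Delta)^2$, $B=I$, for which $\theta=(m-1)/m=1/2$ and $\lambda_0=2^m=4$, and then to verify the sharpened inequality $\mathbb V(D)\le \tfrac14 D^{1/2}-\tfrac1{3\pi}$ for all admissible $D\ge 4$. Since both sides are homogeneous in the correct way, once this scalar inequality for $\mathbb V$ is established, \eqref{S22corr} follows immediately from \eqref{2.Vint}.

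First I would read off from Theorem~\ref{T:S2as} (i.e.\ from the asymptotic expansion \eqref{S2.max-as} with $\theta=1/2$) the constants $g_1=1/8$, $g_2=-1/(6\pi)$ and $g_3=0$; together with $S=1/(\theta^\theta(1-\theta)^{1-\theta})=2$, this gives exactly the target coefficients $g_1 S=1/4$ and $g_2/\theta=-1/(3\pi)$, and the third term in \eqref{S2.max-as} is strictly negative. Hence, as in Remark~\ref{Rem6.good}, the inequality
\[
\mathbb V(D)<\tfrac14 D^{1/2}-\tfrac1{3\pi}
\]
holds for all sufficiently large $D$, say for $D\ge D_0$.

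The main work, and the principal obstacle, is to show that the same inequality in fact holds on the entire interval $[4,D_0]$. For this I would follow the scheme used in the proof of Theorem~\ref{Th.tor.1.1}: apply the variational characterization
\[
\mathbb V(D)=\inf_{\lambda\ge -\lambda_0}\{(\lambda+D)G_\lambda(\xi,\xi)\},
\]
where $G_\lambda(\xi,\xi)=G(\lambda)=\frac1{4\pi}\sum_{n=1}^\infty\frac{2n+1}{n^2(n+1)^2+\lambda}$ is given by \eqref{gS2}. Using the guiding formula \eqref{lambda(D)}, which in the present case yields $\lambda(D)=D-\frac4{3\pi}\sqrt D+\dots$, I would substitute a suitable truncation $\lambda_*=\lambda_*(D)$ (for example $\lambda_*=D-c\sqrt D$ for a judicious constant $c$, or simply $\lambda_*=D$) into $(\lambda+D)G(\lambda)$ to produce an explicit one-variable upper bound $\Psi(D)$ for $\mathbb V(D)$, and then verify $\Psi(D)\le \tfrac14 D^{1/2}-\tfrac1{3\pi}$ analytically. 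The hard part is estimating the slowly convergent series for $G(\lambda)$ with enough precision; a clean way is to isolate the leading $n=1$ and possibly $n=2$ terms of the series and bound the tail by a monotone comparison with the integral $\frac1{4\pi}\int_0^\infty\frac{(2x+1)\,dx}{x^2(x+1)^2+\lambda}$, for which closed-form expressions via $\arctan$-type antiderivatives are available and match the asymptotics of Theorem~\ref{T:S2as} up to exponentially small corrections. This reduces the problem to checking that a concrete, smooth, one-variable function is negative on the compact interval $[4,D_0]$, something one can do by standard monotonicity/convexity arguments (the numerical evidence of Figure~\ref{fig:S2l} for $m=2$ shows the inequality is true with a comfortable margin, so such an analytic confirmation should go through).

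Finally, sharpness of both constants and non-existence of extremals follow from the already-established asymptotics \eqref{S2.max-as}: if either $1/4$ or $1/(3\pi)$ could be improved, the leading or subleading term in \eqref{S2.max-as} would have to be strictly smaller, contradicting \eqref{S2.max-as}; and by Theorem~\ref{Th4.main1} an extremal exists iff $\sup_{\lambda>0}\lambda^{1/2}G(\lambda)$ is attained at a finite $\lambda_*$, whereas for $m=2$ this supremum equals the limit at infinity $\int_0^\infty(1+x^2)^{-1}dx=\pi/2$ (this is precisely the case $m=2$ covered by \eqref{K_theta_supS2}), so no extremal function exists.
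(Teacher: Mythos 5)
Your proposal reproduces the paper's argument faithfully in its overall structure: you correctly specialize the Euler--Maclaurin data from \eqref{g1g2g3} to $m=2$ (getting $g_1=1/8$, $g_2=-1/(6\pi)$, $g_3=0$, $S=2$), apply Proposition~\ref{Prop6.as} to obtain \eqref{S2.max-as} and conclude that $\mathbb V(D)<\overline{\mathbb V}(D)$ for all large $D$, and then you (correctly) identify the real work as extending this to the compact interval $[4,D_0]$. However, you should be aware that for $\mathbb{S}^2$ the paper itself does \emph{not} carry out that finite-interval verification analytically; it explicitly appeals to the computer plot in Figure~\ref{fig:S2l}. Your proposal to replace this by an analytic estimate \`a la Theorem~\ref{Th.tor.1.1} or Lemma~\ref{L:S3-analyt} (substituting a truncated $\lambda_*(D)$ into the variational formula of Theorem~\ref{Th.var}) is a sensible route and matches exactly what the authors do for the torus and for $\mathbb{S}^3$, but you do not actually execute it --- you note the series for $G(\lambda)$ is slowly convergent and defer to the figure for confidence. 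So your proposal neither supplies the missing analytic step nor adds anything beyond the paper's own numerical argument; the gap is the same one the paper leaves.

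One small logical slip in the last paragraph: Theorem~\ref{Th4.main1} characterizes extremals for the \emph{uncorrected} inequality \eqref{4.ineq1}, and the observation that $\sup_\lambda \lambda^{1/2}G(\lambda)$ is attained only in the limit $\lambda\to\infty$ (the $m=2$ case of \eqref{K_theta_supS2}) rules out extremals for \eqref{classS2}, not for \eqref{S22corr}. The non-existence of extremals in \eqref{S22corr} should instead be argued from the \emph{strictness} of the scalar inequality $\mathbb V(D)<\tfrac14 D^{1/2}-\tfrac1{3\pi}$ for every finite $D\ge 4$: for any nonzero $u$, taking $\xi$ where $|u|$ attains its max and $D=\|\Delta u\|^2/\|u\|^2$, Theorem~\ref{Th4.main} gives $|u(\xi)|^2\le\|u\|^2\,\mathbb V(D)<\|u\|^2\,\overline{\mathbb V}(D)$, so equality in \eqref{S22corr} is never attained. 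Sharpness of both coefficients, as you say, follows from the two leading terms in \eqref{S2.max-as}.
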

For $m=3$ and $m=4$ the function
${\mathbb V}_m(\xi,D)-\overline{\mathbb V}_m(\xi,D)$
attains a global maximum $0.00486\dots$
at $D=15.8\dots$  for $m=3$
and a global maximum $0.0189\dots$ at $D=22.4\dots$
for $m=4$, respectively.
Accordingly, we have
\begin{equation}\label{S223and4}
\|u\|_\infty^2\le
\frac1{4\sin\theta\pi}\left(\frac{1-\theta}\theta\right)^\theta
\|u\|^{2\theta}\|(-\Delta)^{m/2}u\|^{2(1-\theta)}-\frac{\varepsilon_m}{6\pi\theta}\|u\|^2,
\end{equation}
where $m=3,4$, $\theta=(m-1)/m$ and $\varepsilon_3=0.938\dots$,
$\varepsilon_4=0.821\dots$.

\medskip
\medskip

We now prove the asymptotic formula
for   functions defined by the
following series~\eqref{F} that
we have been  systematically using above.
Let $F(\mu)$ be defined as follows
\begin{equation}\label{F}
F(\mu):=\sum_{n=1}^\infty(2n+1)f\left(\mu\, n(n+1)\right),
\end{equation}
where $f$ is sufficiently smooth and sufficiently fast
decays at infinity. We need to find the asymptotics of
$F(\mu)$ as $\mu\to0$.
\begin{lemma}\label{L:E-M}
The following asymptotic expansion holds as $\mu\to0$:
\begin{equation}\label{as2}
F(\mu)=\frac1{\mu}\int_0^\infty f(x)dx-\frac23f(0)-
\frac1{15}\mu f'(0)+
%\frac4{315}\mu^2f''(0)+
O(\mu^2).
\end{equation}
\end{lemma}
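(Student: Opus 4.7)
The plan is to apply the classical Euler--Maclaurin summation formula to $\phi(n):=(2n+1)\,f(\mu\,n(n+1))$ on $[0,\infty)$. Writing
$$
F(\mu)=\sum_{n=0}^\infty\phi(n)-\phi(0)=\sum_{n=0}^\infty\phi(n)-f(0),
$$
Euler--Maclaurin gives
$$
\sum_{n=0}^\infty\phi(n)=\int_0^\infty\phi(n)\,dn+\tfrac12\phi(0)-\sum_{k\ge 1}\frac{B_{2k}}{(2k)!}\phi^{(2k-1)}(0)+R,
$$
where the boundary terms at $+\infty$ vanish thanks to the assumed rapid decay of $f$ and its derivatives, and $R$ is the Euler--Maclaurin remainder.

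The crucial identity is $\tfrac{d}{dn}[\mu\,n(n+1)]=\mu(2n+1)$, so the substitution $x=\mu\,n(n+1)$ converts $\phi(n)\,dn$ into $\mu^{-1}f(x)\,dx$, giving
$$
\int_0^\infty\phi(n)\,dn=\frac1\mu\int_0^\infty f(x)\,dx.
$$
Direct differentiation by the chain rule yields $\phi(0)=f(0)$, $\phi'(0)=2f(0)+\mu f'(0)$, and $\phi'''(0)=12\mu f'(0)+12\mu^2 f''(0)+\mu^3 f'''(0)$. More generally one shows by induction that $\phi^{(k)}(n)$ is a finite sum of terms $c_{k,j}\,\mu^{j}(2n+1)^{2j-k}f^{(j)}(\mu\,n(n+1))$ with $\lceil k/2\rceil\le j\le k$, a structural fact needed for the remainder estimate.

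Substituting $B_2=1/6$ and $B_4=-1/30$, the explicit part of Euler--Maclaurin contributes to the coefficient of $f(0)$ the amount $\tfrac12-1-\tfrac16=-\tfrac23$, and to the coefficient of $\mu f'(0)$ the amount $-\tfrac1{12}+\tfrac1{60}=-\tfrac1{15}$, matching the stated expansion exactly. All remaining pieces from $\phi'(0)$ and $\phi'''(0)$ are of order $\mu^2$ and are absorbed into $O(\mu^2)$.

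For the remainder, the plan is to truncate Euler--Maclaurin at a sufficiently high order $p$ (say $p=3$, which also produces $O(\mu^2)$ contributions from $\phi^{(5)}(0)$ that are again absorbed) and bound
$$
|R|\le C\int_0^\infty |\phi^{(2p+1)}(n)|\,dn.
$$
Using the structural formula for $\phi^{(k)}$ together with the change of variables $x=\mu\,n(n+1)$, so that $(2n+1)\sim 2\sqrt{x/\mu}$, each monomial term reduces to $\mu^{j-1}\int_0^\infty(2n(x)+1)^{2j-k-1}f^{(j)}(x)\,dx$ which, after estimating $(2n(x)+1)^{2q}\le C(1+x^q\mu^{-q})$, is easily seen to be $O(\mu^2)$ under the assumed rapid decay of $f$ and its derivatives. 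The only real obstacle is careful bookkeeping of the powers of $\mu$ produced by these factors; the integrals themselves are finite for free from the decay hypothesis on $f$.
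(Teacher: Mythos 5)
Your approach is exactly the paper's: apply Euler--Maclaurin to $\phi(n)=(2n+1)f(\mu n(n+1))$, observe that the integral term collapses to $\frac1\mu\int_0^\infty f$ by the substitution $x=\mu n(n+1)$, read off the coefficients $-\tfrac23$ and $-\tfrac1{15}$ from $\phi(0),\phi'(0),\phi'''(0)$ together with $B_2,B_4$, and power-count the remainder integral; the paper truncates at $k=6$ and writes $R^{(6)}(x)$ out explicitly, while you truncate one order higher and invoke a general structural formula, but these are minor bookkeeping variants of the same argument.

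One small inaccuracy worth fixing: the structural formula you state, $\phi^{(k)}(n)=\sum_j c_{k,j}\,\mu^j(2n+1)^{2j-k}f^{(j)}$ with $\lceil k/2\rceil\le j\le k$, is actually the formula for the function $f(\mu n(n+1))$ \emph{without} the $(2n+1)$ prefactor; it already fails at $k=0$ (which should give $(2n+1)^1 f$, not $(2n+1)^0 f$). Because of the extra factor, each derivative $\phi^{(k)}$ is $(2n+1)\psi^{(k)}+2k\psi^{(k-1)}$ with $\psi^{(k)}$ having the form you wrote, so the top exponent is $2j-k+1$ (cf.\ the paper's explicit $R^{(6)}$, whose highest-order term is $\mu^6(2x+1)^7f^{(6)}$) and the lower bound on $j$ is $\lfloor k/2\rfloor$. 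This shifts the power counting of each term by $\mu^{\pm 1/2}$ but never below $O(\mu^2)$ at the truncation level you chose, so the final estimate goes through unchanged.
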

\begin{proof}
We set
$$
R(x)=(2x+1)f(\mu\,x(x+1))
$$
and observe that
\begin{equation}\label{Rint}
\int_0^\infty R(x)dx=\frac1\mu\int_0^\infty
f(\mu\,x(x+1))d(\mu\,x(x+1))=\frac1\mu\int_0^\infty
f(x)dx.
\end{equation}
We calculate the derivatives of $R$ up the order 5
at $x=0$:
%setting $t(x):=\mu x(x+1)$ and $u(x):=2x+1$ we have
\begin{equation}\label{derR}
\aligned
&R(0)=f(0);\\
%&R'(x)=2f(t)+\mu u^2f'(t),
&R'(0)=2f(0)+\mu f'(0);\\
%&R''(x)=6\mu uf'(t)+\mu^2u^3f''(t),
&R''(0)=6\mu f'(0)+\mu^2f''(0);\\
%&R'''(x)=12\mu f'(t)+12\mu^2u^2f''(t)+\mu^3u^4f'''(t),
&R'''(0)=12\mu f'(0)+12\mu^2f''(0)+\mu^3f'''(0);\\
%&R^{(4)}(x)=60\mu^2uf''(t)+20\mu^3u^3f'''(t)+\mu^4u^5f^{(4)}(t),
&R^{(4)}(0)=60\mu^2f''(0)+20\mu^3f'''(0)+ \mu^4f^{(4)}(0);\\
&R^{(5)}(0)=120\mu^2f''(0)+180\mu^3f'''(0)+30\mu^4f^{(4)}(0)+\mu^5f^{(5)}(0),
\endaligned
\end{equation}
and, in addition,
\begin{multline}\label{R6}
R^{(6)}(x)=
840\mu^3(2x+1)f^{(3)}(\mu x(x+1))+420\mu^4(2x+1)^3f^{(4)}(\mu x(x+1))+\\
+42\mu^5(2x+1)^5f^{(5)}(\mu x(x+1))+\mu^6(2x+1)^7f^{(6)}(\mu
x(x+1)).
\end{multline}
%R^{(6)}(x)=
%840\mu^3(2x+1)f^{(3)}(\mu x(x+1))+420\mu^4(2x+1)f^{(4)}(\mu x(x+1))
%+42\mu^5(2x+1)^5f^{(5)}(\mu x(x+1))+\mu^6(2x+1)^7f^{(6)}(\mu x(x+1))
%\end{equation}

%The derivatives of $R$ of higher order contain the powers of $\mu$
%of the order $3$ and higher.
Next we use the Euler--Maclaurin formula~
(see, for instance,~\cite{Krylov})
\begin{equation}\label{E-M}
\sum_{n=0}^\infty R(n)=\int_0^\infty R(x)dx+\frac12R(0)-
\sum_{i=2}^k\frac{B_i}{i!}R^{(i-1)}(0)-\int_0^\infty
\frac{B_k(x)}{k!}R^{(k)}(x)dx,
\end{equation}
where the $B_k$'s are the Bernoulli numbers:
$B_2=\frac16$, $B_3=0$, $B_4=-\frac 1{30}$, $B_5=0$,
$B_6=\frac1{42}$, $\dots$, and the $B_k(x)$'s are the
periodic Bernoulli polynomials. Using~(\ref{E-M}) with
$k=6$ and taking into account~(\ref{Rint}) and (\ref{derR})
we obtain
\begin{equation*}\label{E-M-R}
\aligned
F(\mu)=-R(0)+
\sum_{n=0}^\infty R(n)=\frac1\mu\int_0^\infty f(x)dx-\frac12f(0)-
\frac1{12}(2f(0)+\mu f'(0))+\\
+\frac1{720}(12\mu f'(0)+12\mu^2f''(0)+\mu^3f'''(0))-\\
-\frac{1}{42\cdot 720}
(120\mu^2f''(0)+180\mu^3f'''(0)+30\mu^4f^{(4)}(0)+\mu^5f^{(5)}(0))
-\int_0^\infty
\frac{B_6(x)}{6!}R^{(6)}(x)dx.
\endaligned
\end{equation*}
This gives \eqref{as2} provided that the remainder integral
term is of the order $O(\mu^2)$. The periodic Bernoulli
polynomials are clearly bounded on $(0,\infty)$. Therefore
the contribution of each term in \eqref{R6} is of the order
$\mu^2$. For example, the last term is of the order
$$
\mu^6\int_0^\infty x^7 g(\mu
x^2)dx=\frac12\mu^2\int_0^\infty y^3g(y)dy.
$$
The three remaining terms in~\eqref{R6} are treated
similarly.
\end{proof}

\paragraph{ Inequalities on  spheres: $\mathbb{S}^3$}\mbox{}
We consider on the 3D sphere $\mathbb{S}^3$
only one example with $l=1$ and $m=2$, so that $\theta=1/2$.
We set $A=(-\Delta)^{2}$, $B=-\Delta$, and let
$$
\mathbb{A}(\lambda)=(-\Delta)^{2}-\lambda\Delta.
$$
The Green's function of $\mathbb{A}(\lambda)$ is
$$
G_\lambda(x,\xi)=\sum_{n=1}^\infty\sum_{k=1}^{(n+1)^2}
\frac{Y_n^k(\xi)Y_n^k(x)}{n(n+2)\bigl(n(n+2)+\lambda\bigr)}
$$
and again using~\eqref{identity} we see that
$G_\lambda(\xi,\xi)$ is independent of $\xi$:
\begin{equation}\label{gS3}
G(\lambda)=G_\lambda(\xi,\xi)=
\frac1{2\pi^2}
\sum_{n=1}^\infty
\frac{(n+1)^2}{n(n+2)\bigl(n(n+2)+\lambda\bigr)},
\ \ \text{for any}\ \ \xi\in\mathbb{S}^3.
\end{equation}

We do not need an analogue of Lemma~\ref{L:E-M} here since
the series in~\eqref{gS3} can be summed explicitly \cite{I98JLMS}:
\begin{equation}\label{gS3expl}
G(\lambda)=\frac1{2\pi^2}
\left[\frac{\sqrt{\lambda-1}}\lambda\frac\pi 2
\coth\pi\sqrt{\lambda-1}-\frac{\lambda-1}{\lambda^2}+
\frac1{4\lambda}
\right],
\end{equation}
and, in addition,
\begin{equation}\label{K_theta_sup}
\sup_{\lambda\ge0}\lambda^\theta G(\lambda)=
\lim_{\lambda\to\infty}\lambda^\theta G(\lambda)=
\frac1{4\pi},
\end{equation}
which gives in view of Theorem~\ref{Th4.main1}
the following sharp inequality on $\mathbb{S}^3$ \cite{I98JLMS}
\begin{equation}\label{S312}
\|u\|^2_\infty\le\frac 1{2\pi}\|\nabla u\|\|\Delta u\|.
\end{equation}

To obtain power expansion of $G(\lambda)$ we can
replace $\coth\pi\sqrt{\lambda-1}$ by $1$ which gives :
$$
G(\lambda)=
\frac1{2\pi^2}
\left[\frac\pi 2\lambda^{-1/2}-\frac34\lambda^{-1}-
\frac\pi 4\lambda^{-3/2}+O(\lambda^{-2})
\right],
$$
so that \eqref{6.expand} holds with:
\begin{equation}\label{g1g2g3S3}
g_1=\frac 1{4\pi},
\qquad g_2=-\frac3{8\pi^2},
\qquad g_3=-\frac1{8\pi}.
\end{equation}

Using Proposition~\ref{Prop6.as} we obtain the following result.
\begin{theorem}\label{T:S3as}
The solution $\mathbb V(D)$ of the extremal problem
\begin{equation}\label{S3.max}
 \mathbb V(\xi,D):=\sup\bigg\{|u(\xi)|^2:\ \ u\in \bar H^2(\mathbb{S}^3),
 \ \ \|\nabla u\|^2=1,
 \ \ \|\Delta u\|^2=D\bigg\}
 \end{equation}
is independent of $\xi\in\mathbb{S}^3$ and has the
following asymptotic behavior as $D\to\infty$:
\begin{equation}\label{S3.max-as}
 \mathbb V(D)=
\frac1{2\pi}D^{1/2}-
\frac{3}{4\pi^2}-\left(\frac{9+4\pi^2}{16\pi^3}\right)D^{-1/2}
+O(D^{-1}).
 \end{equation}
\end{theorem}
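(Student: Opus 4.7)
The plan is to invoke Proposition~\ref{Prop6.as} directly, once we have verified that the closed-form expression \eqref{gS3expl} for $G(\lambda)$ admits the asymptotic expansion \eqref{6.expand} with exponent $\theta = 1/2$ and coefficients $g_1, g_2, g_3$ as claimed in \eqref{g1g2g3S3}. Since the series defining $G(\lambda)$ has already been summed in closed form, no further analytic work on the series itself is needed and the proof reduces almost entirely to a finite-order Taylor expansion followed by the algebraic substitution performed inside Proposition~\ref{Prop6.as}.

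Concretely, I would start from
$$
G(\lambda)=\frac1{2\pi^2}\left[\frac{\sqrt{\lambda-1}}{\lambda}\frac{\pi}{2}\coth\pi\sqrt{\lambda-1}-\frac{\lambda-1}{\lambda^{2}}+\frac1{4\lambda}\right]
$$
and write $\coth\pi\sqrt{\lambda-1}=1+R(\lambda)$, where $R(\lambda)=O(e^{-2\pi\sqrt{\lambda-1}})$ as $\lambda\to\infty$. Substituting this, the exponentially small contribution of $R(\lambda)$ is absorbed into the $o(\lambda^{\theta-1})$-remainder, and the same is true after one differentiation in $\lambda$ (as required for the applicability of Proposition~\ref{Prop6.as}), since the exponential factor survives differentiation. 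Expanding the remaining algebraic piece by means of $\sqrt{1-1/\lambda}=1-\tfrac1{2\lambda}-\tfrac1{8\lambda^{2}}+O(\lambda^{-3})$ and collecting powers gives
$$
G(\lambda)=\frac1{4\pi}\lambda^{-1/2}-\frac{3}{8\pi^{2}}\lambda^{-1}-\frac1{8\pi}\lambda^{-3/2}+O(\lambda^{-2}),
$$
which is exactly \eqref{6.expand} with $\theta=1/2$ and the values of $g_1,g_2,g_3$ listed in \eqref{g1g2g3S3}.

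With these coefficients in hand, the remainder of the proof is a bookkeeping exercise: apply Proposition~\ref{Prop6.as}. With $\theta=1/2$ one has $S=1/[\theta^{\theta}(1-\theta)^{1-\theta}]=2$, so the leading term is $g_1 S D^{1/2}=D^{1/2}/(2\pi)$ and the constant term is $g_2/\theta=-3/(4\pi^{2})$. For the coefficient of $D^{-1/2}$ one computes
$$
-\tfrac12 S^{-1}\frac{g_2^{2}(1-\theta)-2\theta g_1g_3}{\theta^{3}g_1}
=-\frac{g_2^{2}-2g_1g_3}{g_1},
$$
and substituting $g_2^{2}=9/(64\pi^{4})$, $2g_1g_3=-1/(16\pi^{2})$, $g_1=1/(4\pi)$ yields exactly $-(9+4\pi^{2})/(16\pi^{3})$, matching \eqref{S3.max-as}.

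I do not foresee a genuine obstacle here: the independence of $\mathbb V(\xi,D)$ from $\xi$ is already secured by the identity \eqref{identity} used in deriving \eqref{gS3}, and Proposition~\ref{Prop6.as} has been proved in full generality. The only point that requires minimal care is the justification of the termwise differentiability of the asymptotic expansion for $f_\xi(\lambda)=G(\lambda)$; this is immediate once one observes that the contributions from $R(\lambda)$ (and its $\lambda$-derivative) remain exponentially small, so that the hypotheses of Proposition~\ref{Prop6.as} are fully met.
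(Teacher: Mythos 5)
Your proposal is correct and follows essentially the same route as the paper: expand the closed-form $G(\lambda)$ from \eqref{gS3expl} in powers of $\lambda^{-1/2}$ by replacing $\coth\pi\sqrt{\lambda-1}$ with $1$ plus an exponentially small remainder, read off $g_1,g_2,g_3$ as in \eqref{g1g2g3S3}, and feed them into Proposition~\ref{Prop6.as}; the arithmetic for the coefficient $-(g_2^2-2g_1g_3)/g_1=-(9+4\pi^2)/(16\pi^3)$ checks out. Your explicit remark about the exponentially small contribution surviving one $\lambda$-differentiation is a welcome detail the paper leaves implicit.
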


The third term in~(\ref{S3.max-as}) is negative, therefore
\begin{equation}\label{Thetainf}
 \mathbb V(D)<\frac1{2\pi}\sqrt{D}-
\frac3{4\pi^2}
\end{equation}
for all $D\ge D_0$, where $D_0$ is
sufficiently large. However, similarly to Theorem~\ref{Th.tor.1.1}
and  Lemma~\ref{L:3.6}, taking the advantage of the explicit
formula~\eqref{gS3expl}, we have the following result.
\begin{lemma}\label{L:S3-analyt}
Inequality~\eqref{Thetainf} holds holds for all $D\ge\sqrt{3}$.
\end{lemma}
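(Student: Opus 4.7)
\medskip

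\noindent\textbf{Proof plan for Lemma~\ref{L:S3-analyt}.} The proof proceeds along the same lines as Theorem~\ref{Th.tor.1.1} and Lemma~\ref{L:3.6}: use the variational characterisation of $\mathbb V(D)$ from Theorem~\ref{Th.var}, substitute a convenient explicit value of $\lambda$, exploit the closed form \eqref{gS3expl} for $G(\lambda)$, and reduce matters to a one-variable inequality that can be verified by elementary means.

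The first step is to apply Theorem~\ref{Th.var} to obtain
$$
\mathbb V(D)=\inf_{\lambda\ge-\lambda_0}(\lambda+D)G(\lambda)\le (\lambda_*+D)G(\lambda_*)
$$
for any admissible choice $\lambda_*=\lambda_*(D)$. The general asymptotic formula \eqref{lambda(D)} with $\theta=1/2$ and $(g_1,g_2)=(1/(4\pi),-3/(8\pi^2))$ gives $\lambda(D)=D-\tfrac{3}{\pi}\sqrt{D}+\dots\,$, so the leading substitution $\lambda_*=D$ is a natural first try (and, if need be, can be sharpened to $\lambda_*=D-\tfrac{3}{\pi}\sqrt{D}$). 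I will work with $\lambda_*=D$, which is $\ge\sqrt{3}>1$ throughout, so $\sqrt{\lambda_*-1}$ is real.

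Next I split the hyperbolic cotangent in \eqref{gS3expl} via the identity $\coth(\pi s)=1+2/(e^{2\pi s}-1)$ with $s=\sqrt{D-1}$. This expresses $G(D)$ as an algebraic part plus a manifestly positive super-exponentially small correction $E(D)$. A short simplification of $2D\cdot G(D)$ then yields
$$
\mathbb V(D)-\frac{\sqrt{D}}{2\pi}+\frac{3}{4\pi^2}\le\phi(D):=\frac{\sqrt{D-1}-\sqrt{D}}{2\pi}+\frac{1}{\pi^2 D}+\frac{\sqrt{D-1}}{\pi(e^{2\pi\sqrt{D-1}}-1)},
$$
and the whole lemma is reduced to $\phi(D)\le 0$ for $D\ge\sqrt{3}$.

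The final step is to verify this scalar inequality. Writing the first summand as $-1/\bigl(2\pi(\sqrt{D-1}+\sqrt{D})\bigr)$ makes it clear that it is of order $-D^{-1/2}$, while the second term is only $O(D^{-1})$ and the third is of order $e^{-c\sqrt{D}}$; hence the negative leading term dominates for large $D$. For the finite range near $D=\sqrt{3}$ one estimates each summand by its value at $D=\sqrt{3}$ (where a direct computation gives $\phi(\sqrt{3})\approx-0.013<0$) after showing that $\phi$ is in fact decreasing, or equivalently that the algebraic part dominates the tiny exponential correction throughout. The main technical obstacle is therefore the intermediate range: the asymptotic dominance of the $D^{-1/2}$ term over the $D^{-1}$ term sets in only after some point, and the exponential term, though minute, must be tracked uniformly. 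If $\lambda_*=D$ turns out to be insufficient at some value of $D$, a minor adjustment such as $\lambda_*=D-c$ or $\lambda_*=D-\tfrac{3}{\pi}\sqrt{D}$ (paralleling the role of $D-\tfrac12$ in Theorem~\ref{Th.tor.1.1} and $3D-\tfrac32$ in its second inequality) provides a tighter bound whose algebraic part still reduces to a simple elementary inequality, possibly verified on a bounded interval by monotonicity together with a graph or a single numerical check at the endpoint, as in Fig.~\ref{fig:R(x)}. Once $\phi(D)\le 0$ is established, the inequality \eqref{Thetainf} follows on all of $[\sqrt{3},\infty)$.
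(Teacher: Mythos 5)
Your reduction matches the paper's proof exactly up to the scalar inequality: the paper also substitutes $\lambda_*=D$ into the variational characterisation from Theorem~\ref{Th.var}, and the algebraic quantity $\phi(D)$ you arrive at is precisely $2DG(D)-\frac{1}{2\pi}\sqrt D+\frac{3}{4\pi^2}$, which the paper manipulates in the same way.

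The gap is in your finishing step. Your primary tactic ("after showing that $\phi$ is in fact decreasing, estimate each summand at $D=\sqrt3$") cannot succeed, because $\phi$ is \emph{increasing} on $[\sqrt3,\infty)$: you yourself compute $\phi(\sqrt3)\approx-0.013$, while $\phi(D)=-\frac{1}{2\pi(\sqrt{D-1}+\sqrt D)}+O(D^{-1})+O(e^{-c\sqrt D})\to 0^-$ as $D\to\infty$, so $\phi$ rises from $\approx-0.013$ towards $0$. A single endpoint evaluation therefore proves nothing on its own, and your remaining fall-backs (a graph, "a single numerical check at the endpoint") are not a proof. The idea you are missing is the paper's monotone split. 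After further bounding $\sqrt{D-1}\le\sqrt D-\frac{1}{2\sqrt D}-\frac{1}{8D^{3/2}}$ and replacing $\coth$ by $1$ only in the negative terms, the paper factors the resulting bound as
$$
\mathbb V(D)-\frac{\sqrt D}{2\pi}+\frac{3}{4\pi^2}\le \frac{1}{2\pi D^{3/2}}\bigl(F_1(D)-F_2(D)\bigr),\qquad
F_1(D)=D^2\bigl(\coth\pi\sqrt{D-1}-1\bigr),\quad F_2(D)=\frac D2+\frac18-\frac{2\sqrt D}{\pi},
$$
and observes that $F_1$ is decreasing while $F_2$ is increasing on $[\sqrt3,\infty)$. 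This turns the single evaluation $F_1(\sqrt3)=0.0278\dots<0.153\dots=F_2(\sqrt3)$ into a legitimate uniform estimate. In short: your algebra is right and your choice of $\lambda_*$ is the same as the paper's, but you need the two-monotone-pieces decomposition (not monotonicity of $\phi$ itself) to close the argument rigorously.
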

\begin{proof}
In view of Theorem~\ref{Th.var}, for $D\ge\sqrt{3}$ (the first eigenvalue
of $(-\Delta)^{1/2}$ on $\mathbb{S}^3$  is $\sqrt{3}$)
%, hence, the first eigenvalue of
%$(-\Delta)(-\Delta)^{-1/2}$ is $\sqrt{3}$)
\begin{equation}\label{S3.var}
\Bbb V(D)=\min_{\lambda\ge-\sqrt{3}}\{(\lambda+D)G(\lambda)\}\le
\{(\lambda+D)G(\lambda)\}\vert_{\lambda=D}=2DG(D).
\end{equation}
Hence, using~\eqref{gS3expl}, the inequality
$\sqrt{D-1}\le\sqrt{D}-1/({2\sqrt{D}})-1/({8D^{3/2}})$,
and replacing $\coth\alpha$ in the negative terms below by
$1$, we obtain
$$
\aligned
\mathbb V(D)-\frac1{2\pi}\sqrt{D}+
\frac3{4\pi^2}\le 2DG(D)-\frac1{2\pi}\sqrt{D}+
\frac3{4\pi^2}=\\
\frac{2D}{2\pi^2}\left(\frac{\sqrt{D-1}}D\frac\pi 2
\coth\pi\sqrt{D-1}-\frac{D-1}{D^2}+
\frac1{4D}   \right)-\frac1{2\pi}\sqrt{D}+
\frac3{4\pi^2}=\\
\frac1{2\pi}\left(\sqrt{D}\left(\coth\pi\sqrt{D-1}-1\right)
 -\frac1{2\sqrt{D}}-\frac1{8D^{3/2}}+\frac2{\pi D}
 \right)=\\=
\frac1{2\pi
D^{3/2}}\left(D^2\left(\coth\pi\sqrt{D-1}-1\right)-
\frac D2-\frac18+\frac{2\sqrt{D}}\pi\right)
=\frac1{2\pi D^{3/2}}\left(F_1(D)-F_2(D)\right)<0,
\endaligned
$$
since $F_1(D)=D^2\left(\coth\pi\sqrt{D-1}-1\right)$
 is decreasing for $D\ge\sqrt{3}$ and
$F_1(D)\le
F_1(\sqrt{3})=3\coth(\pi\sqrt{\sqrt{3}-1}-1)=0.0278\dots$,
while $F_2(D)=D/2+1/8-2\sqrt{D}/\pi$
is increasing and
$F_2(D)\ge F_2(\sqrt{3})=\sqrt{3}/2+1/8-2\sqrt[4]{3}/\pi=0.153\dots$\,.
The proof is complete.
\end{proof}

As an immediate corollary we obtain  inequality~(\ref{S312}) in the
following refined form.

\begin{theorem}\label{S312ex}
If $u\in\bar{H}^2(\mathbb{S}^3)$, then
\begin{equation}\label{12two}
\|u\|^2_\infty\le\frac 1{2\pi}\|\nabla u\|
\|\Delta u\|-\frac3{4\pi^2}\|\nabla u\|^2,
\end{equation}
where both constants are sharp and no extremal functions
exist.
\end{theorem}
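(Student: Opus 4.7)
The plan is to deduce the theorem essentially as a direct corollary of Lemma \ref{L:S3-analyt}, using only the general homogeneity relation \eqref{2.Vint} and the rotational symmetry of $\mathbb{S}^3$. First I would fix an arbitrary nonzero $u\in\bar H^2(\mathbb{S}^3)$, an arbitrary $\xi\in\mathbb{S}^3$, and set
$$D:=\frac{\|\Delta u\|^2}{\|\nabla u\|^2}.$$
The Poincar\'e-type inequality \eqref{Poincare} guarantees $D\ge\lambda_0$, so $D$ lies in the range where Lemma \ref{L:S3-analyt} applies. Since both $A=(-\Delta)^2$ and $B=-\Delta$ are rotation-invariant on $\mathbb{S}^3$, the discussion of \S\ref{s2s2} shows that the function $\mathbb V(\xi,D)$ from \eqref{S3.max} is independent of $\xi$; together with \eqref{2.Vint} this gives
$$|u(\xi)|^2\le\|u\|_B^2\,\mathbb V(D)=\|\nabla u\|^2\,\mathbb V(D).$$

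Next I would plug in the strict bound $\mathbb V(D)<\tfrac1{2\pi}\sqrt D-\tfrac3{4\pi^2}$ from Lemma \ref{L:S3-analyt}, use $\sqrt D\cdot\|\nabla u\|^2=\|\nabla u\|\|\Delta u\|$, and take the supremum over $\xi\in\mathbb{S}^3$ to obtain
$$\|u\|_\infty^2\le\frac1{2\pi}\|\nabla u\|\|\Delta u\|-\frac3{4\pi^2}\|\nabla u\|^2,$$
which is \eqref{12two}. The main technical obstacle, namely upgrading the asymptotic estimate \eqref{Thetainf} (valid only for large $D$) to a bound valid for all admissible $D$, has already been surmounted in Lemma \ref{L:S3-analyt} using the explicit formula \eqref{gS3expl} and the substitution $\lambda=D$ in the variational representation of Theorem \ref{Th.var}.

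For the sharpness statement I would invoke the asymptotic expansion \eqref{S3.max-as}, which gives
$$\mathbb V(D)=\frac1{2\pi}\sqrt D-\frac3{4\pi^2}+O(D^{-1/2})\quad\text{as }D\to\infty.$$
Any reduction of either of the two constants $\tfrac1{2\pi}$ or $\tfrac3{4\pi^2}$ would be incompatible with this two-term asymptotics; concretely, testing \eqref{12two} against the near-extremal family $u_\lambda(x)=G_\lambda(x,\xi)$ as $\lambda\to\infty$ (whose normalized $A$- to $B$-ratio tends to infinity via \eqref{4.Dk}) realizes the asymptotic equality and therefore pins down both constants.

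Finally, non-existence of extremal functions is immediate from the strict inequality in Lemma \ref{L:S3-analyt}: were $u_*$ an extremal in \eqref{12two}, then for $D_*=\|\Delta u_*\|^2/\|\nabla u_*\|^2$ one would have $\mathbb V(D_*)=\tfrac1{2\pi}\sqrt{D_*}-\tfrac3{4\pi^2}$, contradicting the strict inequality. Alternatively, I would appeal to the last part of Theorem \ref{Th4.main1}: an extremal exists iff the supremum in \eqref{C.sharp} is attained at a finite $\lambda_*$, whereas \eqref{K_theta_sup} shows that for the present $G(\lambda)$ the supremum is attained only in the limit $\lambda\to\infty$.
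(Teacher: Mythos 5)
Your argument is correct and coincides with the paper's own route: the paper states Theorem \ref{S312ex} as an immediate corollary of Lemma \ref{L:S3-analyt}, combined with the homogeneity relation \eqref{2.Vint} and the $\xi$-independence of $\mathbb V$, with sharpness and non-existence of extremals read off from the asymptotics \eqref{S3.max-as} and from \eqref{K_theta_sup} via Theorem \ref{Th4.main1}. You have merely spelled out the steps the paper leaves implicit.
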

\begin{remark}\label{R:leading-term}
{\rm
The coefficient of the leading term in
the asymptotic expansions~\eqref{S2.max-as},
\eqref{S3.max-as} coincides with the corresponding
constant in $\mathbb{R}^n$.
}
\end{remark}

\subsection{Manifolds with boundary} \phantom{eggog}

\subsubsection{First-order inequality on the half-line}\label{SS:Half}
We consider a couple of inequalities on the half-line
$\mathbb{R}_+=(0,\infty)$. On the whole $\mathbb{R}$ we
have
\begin{equation}\label{0lR}
\|u\|_\infty^2\le \|u\|\|u'\|,
\end{equation}
where $u\in H^1(\mathbb{R})$ and where the constant $1$ is
sharp and there exists a unique extremal function
$u_*(x)=e^{-|x|}$. Using the extension by zero we see that
inequality~\eqref{0lR} still holds on $\mathbb{R}_+$, and
there are no extremal functions since $u_*(x)>0$. We can
obtain the following refined form of this inequality
\begin{equation}\label{0-infty}
 u(\xi)^2\le\|u\|\|u'\|(1-e^{-2\xi}),\qquad u\in H^1_0(\mathbb{R}_+)
\end{equation}
from the general theory developed above. In fact, the
Green's function of the operator
$$
\mathbb{A}u=-u''+u, \qquad u(0)=u(\infty)=0
$$
is
$$
G(x,\xi)=\left\{
\begin{array}{ll}
e^{-\xi}\sinh x, & x<\xi,\\
e^{-x}\sinh\xi, & x\ge \xi. \\
\end{array}
\right.
$$
As in Lemma~\ref{Lem3.est} we have
$$
u(\xi)^2\le
 \bigl(\|u'\|^2+\|u\|^2\bigr)G(\xi,\xi)=
\bigl(\|u'\|^2+\|u\|^2\bigr)\frac{1-e^{-2\xi}}2\,,
$$
and \eqref{0-infty} follows by the standard scaling
argument.

\subsubsection{Inequality for Bessel operator on the half-line}\label{SS:Bess}
Here we consider a one-dimensional inequality on the
half-line $\mathbb{R}^+$ with $Au=-u''-u/4x^2$ and $Bu=u$.
Although the corresponding sharp constant was found in
\cite{Lapt_unpubl} in connection with Lieb--Thirring
inequalities for radial potentials (see also \cite{Ekh-Fr})
we include this example to illustrate our general approach.
We first observe that in view of the one-dimensional Hardy
inequality
$$
\int_0^\infty\frac{u(x)^2}{x^2}dx\le 4\int_0^\infty
(u'(x))^2dx,
$$
the operator $A$ is non-negative. In accordance with the
general theory developed in section~\ref{s1} in order to
find the sharp constant $C=C(\xi)$ in the inequality
\begin{equation}\label{Hardy-Bessel}
|u(\xi)|^2\le
C(\xi)\left(\int_0^\infty\left((u'(x))^2-\frac{u(x)^2}{4x^2}\right)
dx\right)^{1/2}\|u\|,
\end{equation}
we need to write down the Green's function of the following
operator
$$
\mathbb{A}(\lambda) u= -u''-u/4x^2+\lambda u
$$
with Dirichlet boundary conditions $u(0)=u(\infty)=0$. We
have
\begin{equation}\label{GrBess}
G_\lambda(x,\xi)=
\left\{
\begin{array}{ll}
\sqrt{x\xi}\,K_0(\sqrt{\lambda} \xi)I_0(\sqrt{\lambda}x),\ x<\xi,& \\
\sqrt{x\xi}\,K_0(\sqrt{\lambda} x)I_0(\sqrt{\lambda}\xi),\ x>\xi,& \\
\end{array}
\right.
\end{equation}
where $K_0$ and $I_0$ are the modified Bessel functions of
zeroth order. In fact, the functions
$\sqrt{x}K_0(\sqrt{\lambda}x)$ and
$\sqrt{x}I_0(\sqrt{\lambda}x)$ both satisfy the homogeneous
equation, $I_0(0)=0$, $K_0(\infty)=0$, and the jump
condition $G(\xi+0,\xi)-G(\xi-0,\xi)=1$
 is satisfied in view of the Wronski identity
$ I_0(x)K_0'(x)-I_0'(x)K_0(x)=-1/x $. Therefore by
Theorem~\ref{Th4.main1}, the constant  $C(\xi)$ is, in
fact, independent of $\xi$ and is given by
\begin{multline*}
C(\xi)=2\sup_{\lambda\ge0}\sqrt{\lambda}G_\lambda(\xi,\xi)
=2\sup_{\lambda\ge0}\sqrt{\lambda}\xi
K_0(\sqrt{\lambda} \xi) I_0(\sqrt{\lambda}\xi)=\\=
2\sup_{r\ge0}r K_0(r) I_0(r)=2r K_0(r)
I_0(r)\vert_{r=r_*=1.075\dots}=2\cdot
0.533\dots=1.06\dots=:C_*.
\end{multline*}
Furthermore, for every fixed $\xi>0$ inequality~\eqref{Hardy-Bessel} turns into equality for $C(\xi)=C_*$ and $u(x)=G_{\lambda_*}(x,\xi)$, where $\lambda_*=\lambda_*(\xi)=r_*^2/\xi^2$. We
also observe that $I_0(x)\sim\sqrt{\frac1{2\pi x}}\,e^x$
and $K_0(x)\sim\sqrt{\frac\pi{2x}}\,e^{-x}$ as
$x\to\infty$, and hence
$$
\lim_{r\to\infty}rK_0(r)I_0(r)=\frac12.
$$

\subsubsection{First-order inequality on
interval}\label{SS:Dir}
In this section we consider the correction term for the
interpolation inequality
\begin{equation}\label{0l}
\|u\|_\infty^2\le 1\cdot\|u\|\|u'\|, \qquad u\in H^1_0(0,L)
\end{equation}
on an interval $(0,L)$ with zero boundary conditions. The
constant $1$ is sharp, since it is sharp for the
inequality~\eqref{0lR} on $\mathbb{R}$
 and we can use extension by
zero. The unique extremal function in~\eqref{0lR} does not
vanish, therefore there are no extremal functions
in~\eqref{0l}, and  one might expect that an inequality
similar to~\eqref{tor.int} holds in the case of a finite
interval:
\begin{equation}\label{dir}
\|u\|_\infty^2\le \|u\|\|u'\|-c_L\|u\|^2, \quad u\in
H^1_0(0,L).
\end{equation}

In fact, $c_L=0$ as we now show. First, by scaling,
if~\eqref{dir} holds, then $c_L=c/L$ where $c$ is an
absolute constant. Next, we consider the truncated
extremal function
$$
\varphi_a(x)=
\left\{
\begin{array}{ll}
    e^{-x}, & \hbox{$0\le x\le a$;} \\
    e^{-a}(a+1-x), & \hbox{$a\le x\le a+1$;} \\
    0, & \hbox{$x\ge0$,} \\
\end{array}
\right.
$$
and set $\varphi_a(-x)=\varphi_a(x)$. Then
$$
\|\varphi_a\|_\infty=1,\quad
\|\varphi_a\|^2=1-\frac13e^{-2a},\quad
\|\varphi_a'\|^2=1+e^{-2a}.
$$
Substituting this into~\eqref{dir} and letting $a\to\infty$
we obtain that~\eqref{dir} can only hold with $c=0$ as claimed.

However, the correction term exists but is exponentially small.
More precisely, we have the following result.
\begin{theorem}\label{T:exp}
Let $u\in H^1_0(0,L)$. Then
\begin{equation}\label{dir-exp}
\|u\|^2_\infty\le\|u\|\|u'\|\bigl(1-2e^{-\frac{L\|u'\|}{\|u\|}}\bigr).
\end{equation}
The  coefficients of the two terms on the right-hand side
are sharp and no extremal functions exist.
\end{theorem}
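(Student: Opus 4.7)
My plan is to apply the general framework of Sections 2.3--2.6 to the setting $\mathcal{M}=(0,L)$, $A=-\partial_x^2$ with Dirichlet boundary conditions, and $B=I$, so that $n=m=1$, $l=0$, $\theta=1/2$, and $\lambda_0=(\pi/L)^2$ by Poincar\'e. The Green's function of $\mathbb{A}(\lambda)=-\partial_x^2+\lambda$ is explicit:
$$
G_\lambda(x,\xi)=\frac{\sinh(\mu\min(x,\xi))\sinh(\mu(L-\max(x,\xi)))}{\mu\sinh(\mu L)},\qquad \mu=\sqrt{\lambda},
$$
and on the diagonal $G_\lambda(\xi,\xi)=[\cosh(\mu L)-\cosh(\mu(L-2\xi))]/(2\mu\sinh(\mu L))$, which is symmetric about $L/2$ and attains its maximum over $\xi\in(0,L)$ at the midpoint, giving $G_\lambda(L/2,L/2)=\tanh(\mu L/2)/(2\mu)$.

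The first reduction is that since $\sup_{\xi}G_\lambda(\xi,\xi)=G_\lambda(L/2,L/2)$ for \emph{every} $\lambda$, the minimax inequality applied to the variational formula of Theorem~\ref{Th.var} gives
$$
\sup_{\xi}\mathbb{V}(\xi,D)=\sup_{\xi}\inf_{\lambda}(\lambda+D)G_\lambda(\xi,\xi)\le\inf_{\lambda}(\lambda+D)G_\lambda(L/2,L/2)=\mathbb{V}(L/2,D),
$$
while the reverse inequality is trivial by taking $\xi=L/2$. Hence the theorem reduces to proving, by scaling to $L=1$ and setting $s:=L\sqrt{D}\ge\pi$, the one-variable inequality
$$
\mathbb{V}(L/2,D)\le\sqrt{D}\bigl(1-2e^{-s}\bigr),\qquad s\ge\pi.
$$
Using the variational bound with parameter $\nu=\sqrt{\lambda}$ and setting $t=L\nu$, it suffices to exhibit $t=t(s)>0$ such that $\tanh(t/2)(s^2+t^2)/(2ts)\le 1-2e^{-s}$. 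A Taylor expansion around $t=s$ shows that the left-hand side equals $1+2/(e^s(e^s+1))+O(s-t)$ at $t=s$, so one must shift $t$ below $s$. The asymptotic analysis of the minimum in $\nu$ of $(\nu^2+D)\tanh(\nu L/2)/(2\nu)$ suggests the choice $t=s-2s^2e^{-s}$ (equivalently $\sqrt{\lambda_*}\approx\sqrt{D}(1-2L\sqrt{D}e^{-L\sqrt{D}})$), which after expansion yields a bound of the form $1-2e^{-s}+(2-2L^2\mu^2+\cdots)e^{-2s}$ with the second-order coefficient strictly negative on $s\ge\pi$ since $s^2\ge\pi^2>1$.

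For sharpness, I would use the family $u_\lambda(x):=G_\lambda(x,L/2)/\|G_\lambda(\cdot,L/2)\|$, which are the extremals for $\mathbb{V}(L/2,D(\lambda))$ provided by Theorem~\ref{Th4.main}. Using Lemma~\ref{Lem4.relations} to write $\mathbb{V}(L/2,D)=f(\lambda)^2/g(\lambda)$ with $f(\lambda)=\tanh(\mu L/2)/(2\mu)$, a careful expansion in powers of $e^{-\mu L}$ shows that
$$
\sqrt{D}(1-2e^{-L\sqrt{D}})-\mathbb{V}(L/2,D)=2\mu(\mu^2L^2-1)e^{-2\mu L}+O(\mu e^{-3\mu L})\to 0
$$
in the appropriate relative sense as $\mu\to\infty$, so both coefficients $1$ and $2$ are sharp and no extremal function exists at any finite $\lambda$.

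The main obstacle is making the one-variable inequality rigorous for all $s\ge\pi$, not only asymptotically. The large-$s$ regime is controlled by the expansion above; the bounded regime $s\in[\pi,s_0]$ requires either explicit elementary estimates on $\tanh$ and $\mathrm{sech}^2$ at the chosen $t(s)$ or, following the style used elsewhere in the paper (cf.\ the proofs of Theorem~\ref{Th.tor.1.1} and Theorem~\ref{T:S22}), a brief verification on a compact interval. The technical care needed here is analogous to the Taylor-based estimates for $\coth(\pi\sqrt{4D-2})$ in the proof of Theorem~\ref{Th.tor.1.1}.
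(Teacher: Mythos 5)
Your plan follows the paper's route essentially step for step: same choice of $A$, $B$, same Green's function, same observation that $\xi=L/2$ maximizes $G_\lambda(\xi,\xi)$ (yielding $\mathbb V(\xi,D)\le\mathbb V(L/2,D)$), same appeal to Theorem~\ref{Th.var}, and after scaling to $L=1$ the same trial point $\lambda_*=D(1-2\sqrt D e^{-\sqrt D})^2$ in the variational formula, followed by the same asymptotic argument for sharpness. However, you stop precisely at the step that actually constitutes the proof: the verification that the resulting one-variable inequality
$$
\tanh\!\Bigl(\tfrac{t}{2}\Bigr)\,\frac{s^2+t^2}{2ts}\ \le\ 1-2e^{-s},\qquad t=s-2s^2e^{-s},
$$
holds for \emph{all} $s\ge\pi$, not just asymptotically. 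You write that ``the bounded regime $s\in[\pi,s_0]$ requires either explicit elementary estimates \dots or a brief verification on a compact interval'' — but you do not identify which estimates would work, nor how to keep the error terms under control uniformly. This is a genuine gap: without it the theorem is only established for $D$ large.

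The paper fills this gap by a concrete and delicate chain of elementary bounds: it uses $\tanh(u/2)\le 1-2e^{-u}+2e^{-2u}$, the monotonicity of $x\mapsto 1-2x+2x^2$ on $[0,1/2]$, and $e^{-d(1-2de^{-d})}\ge e^{-d}(1+2d^2e^{-d})$, to reduce the target inequality to the sign of an explicit polynomial-exponential function $F(d)=4e^{-2d}F_1(d)$. Then it drops all negative terms of $F_1$ except $-d^2$, observes that each remaining term is decreasing for $d\ge\pi$, and evaluates at $d=\pi$ to obtain a single negative number. You would need to carry out something of this kind; the asymptotic expansion alone, even with a negative second-order coefficient, does not control the remainder on a fixed compact interval. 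Two minor inaccuracies to note as well: your claimed coefficient ``$2-2L^2\mu^2$'' in the $e^{-2s}$ term mixes up $\mu$ (introduced for $\sqrt\lambda$) with $\sqrt D$, and the ``minimax'' phrasing of the reduction to $\xi=L/2$ is really just the pointwise bound $G_\lambda(\xi,\xi)\le G_\lambda(L/2,L/2)$, valid for every $\lambda$ — which is what the paper says directly.
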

\begin{proof}
Without loss of generality we set $L=1$. The Green's
function of the  boundary value problem
$$
-y''+\lambda y=\delta(x,\xi), \qquad y(0)=y(1)=0
$$
is
\begin{equation}\label{Green10}
G_\lambda(x,\xi)=\frac1{\lambda^{1/2}\sinh\lambda^{1/2}}
\left\{
\begin{array}{ll}
\sinh\lambda^{1/2}x\sinh\lambda^{1/2}(1-\xi), & 0\le x\le\xi; \\
\sinh\lambda^{1/2}\xi\sinh\lambda^{1/2}(1-x), & \xi\le x\le1. \\
\end{array}
\right.
\end{equation}
In addition, $G_\lambda(\xi,\xi)$ attains its maximum value
with respect to $\xi$ at $\xi=1/2$:
$$
\aligned
 G_\lambda(\xi,\xi)
 %=
%\frac1{2\lambda^{1/2}\sinh\lambda^{1/2}}
%\left(\cosh\lambda^{1/2}-\cosh\lambda^{1/2}(2\xi-1)\right)\le
\le
\frac1{2\lambda^{1/2}\sinh\lambda^{1/2}}
\left(\cosh\lambda^{1/2}-1\right)=
\frac1{2\lambda^{1/2}}\tanh\frac{\lambda^{1/2}}2=
G_\lambda(1/2,1/2).
\endaligned
$$
As in Lemma~\ref{Lem3.est}
$$
u(\xi)^2\le G_\lambda(\xi,\xi)(\|u\|^2+\lambda\|u'\|^2) \le
G_\lambda(1/2,1/2)(\|u\|^2+\lambda\|u'\|^2),
$$
where we have the equality at the point $1/2$ for
$u(x)=\mathrm{const}\,G_\lambda(x,1/2)$. Hence,
$$\mathbb V(\xi,D)\le \mathbb V(1/2,D),$$
and as in~\eqref{4.fgh},\eqref{4.fgh1} we obtain the
parametric representation of $\mathbb V(D):=\mathbb
V(1/2,D)$:
\begin{equation}\label{Dir.fgh1}
D(\lambda)=\frac{h(\lambda)}{g(\lambda)},\qquad \mathbb
\mathbb V(D(\lambda))=
\frac{f(\lambda)^2}{g(\lambda)},\qquad
\lambda\in[-\pi^2,\infty),
\end{equation}
where
\begin{equation}\label{Dir.fgh}
f(\lambda)=
\frac1{2\lambda^{1/2}}\tanh\frac{\lambda^{1/2}}2,\qquad
g(\lambda)=-f'(\lambda),\qquad
h(\lambda)=f(\lambda)-\lambda g(\lambda),
\end{equation}
and~\eqref{dir-exp} is equivalent to the inequality
\begin{equation}\label{forallD}
\mathbb V(D)\le\sqrt{D}(1-2e^{-\sqrt{D}})\quad
\text{for}\quad D\ge\pi^2.
\end{equation}

Using the variational representation from
Theorem~\ref{Th.var} we have
\begin{equation}\label{varrep}
\mathbb V(D)=\min_{\lambda\in(-\pi^2,\infty)}
\frac1{2\sqrt{\lambda}}\tanh\frac{\lambda^{1/2}}2\cdot(\lambda+D).
\end{equation}
To simplify notation we denote $d:=\sqrt{D}$ and further
set
\begin{equation}\label{lam*}
\lambda_*=\lambda_*(D)=D(1-2\sqrt{D}e^{-\sqrt{D}})^2=
d^2(1-2de^{-d})^2.
\end{equation}
As we shall see below  $\lambda_*(D)$ contains the first
terms of the asymptotic expansion as $D\to\infty$ of the
unique solution of the first equation in~\eqref{Dir.fgh1},
or, equivalently, of the unique point $\lambda$ where the global
minimum in~\eqref{varrep} is attained. For the moment, however, we just
substitute
$\lambda_*(D)$ into the right-hand side of \eqref{varrep}
and see that we prove~\eqref{forallD} if we can show that
the following inequality holds for all $d\ge\pi$:
\begin{equation}\label{subslambda}
\frac12\frac1{d(1-2de^{-d})}\tanh\frac{d(1-2de^{-d})}2
\left(d^2(1-2de^{-d})^2+d^2\right)\le
d(1-2e^{-d}),
\end{equation}
or
\begin{equation}\label{subslambda1}
%\frac12\frac1{d(1-2de^{-d})}
\tanh\frac{d(1-2de^{-d})}2
\left((1-2de^{-d})^2+1\right)\le
2(1-2e^{-d})(1-2de^{-d}).
\end{equation}
Next, we use that $\tanh(u/2)\le 1-2e^{-u}+2e^{-2u}$,
and we also observe that the quadratic polynomial $1-2x+2x^2=
2(x-1/2)^2+1/2$ is monotone decreasing  for $x\in[0,1/2]$
so that
$$
\tanh\frac{d(1-2de^{-d})}2\le
1-2e^{-d(1-2de^{-d})}+2e^{-2d(1-2de^{-d})}\le
1-2e^{-d}(1+2d^2e^{-d})+2e^{-2d}(1+2d^2e^{-d})^2,
$$
where we used that $e^{-d(1-2de^{-d})}\ge
e^{-d}(1+2d^2e^{-d})$ which essentially is the inequality
$e^x>1+x$.

Combining the above we see that it suffices to establish
for $d\ge\pi$
the inequality
\begin{equation}\label{eqforF}
\aligned
F(d):=\left(1-2e^{-d}(1+2d^2e^{-d})+2e^{-2d}(1+2d^2e^{-d})^2\right)
\left((1-2de^{-d})^2+1\right)-\\-
2(1-2e^{-d})(1-2de^{-d})\le0.
\endaligned
\end{equation}
Simplifying $F(d)$ we obtain
$$
F(d)=4e^{-2d}F_1(d),
$$
where
$$
\aligned
F_1(d)=
-d^2+
2d^2e^{-d}+4d^3e^{-d}+1-2de^{-d}+2d^2e^{-2d}-
8d^3e^{-2d}+\\+8d^4e^{-3d}-8d^5e^{-3d}+8d^6e^{-4d}.
\endaligned
$$
Next, dropping all  negative terms  except for $-d^2$ we
have
$$
F_1(d)<
-d^2+1+F_2(d),\qquad F_2(d):=
2d^2e^{-d}+4d^3e^{-d}+2d^2e^{-2d}+8d^4e^{-3d}+8d^6e^{-4d}.
$$
Each term in $F_2(d)$ is monotonely decreasing for
$d\ge\pi$, and hence
$$
F_1(d)<-\pi^2+1+F_2(\pi)=-2.530\ldots<0,
$$
which proves~\eqref{forallD}.

To explain our choice of $\lambda_*(D)$ in~\eqref{lam*}
 we find the asymptotics as $D\to\infty$  of the inverse function
$\lambda=\lambda(D)$  since our   $\lambda_*(D)$ contains the first two terms
of this asymptotic expansion.
We have
$$
\aligned
&f(\lambda)=
\frac1{2\lambda^{1/2}}\tanh\frac{\lambda^{1/2}}2=
\frac1{2\lambda^{1/2}}\frac{1-e^{-\lambda^{1/2}}}{1+e^{-\lambda^{1/2}}}=
\frac12\lambda^{-1/2}\left(1-2e^{-\lambda^{1/2}}+2e^{-2\lambda^{1/2}}+
\dots\right),\\
&g(\lambda)=
\frac14\lambda^{-3/2}\left(1+(-2-2\lambda^{1/2})e^{-\lambda^{1/2}}+
(2+4\lambda^{1/2})e^{-2\lambda^{1/2}}+\dots\right),\\
&h(\lambda)=
\frac14\lambda^{-1/2}\left(1+(-2+2\lambda^{1/2})e^{-\lambda^{1/2}}+
(2-4\lambda^{1/2})e^{-2\lambda^{1/2}}+\dots\right),
\endaligned
$$
and
$$
\aligned
D(\lambda)=\frac{h(\lambda)}{g(\lambda)}=
\frac{\lambda(1+(-2+2\lambda^{1/2})e^{-\lambda^{1/2}}+
(2-4\lambda^{1/2})e^{-2\lambda^{1/2}}+\dots)}
{1+(-2-2\lambda^{1/2})e^{-\lambda^{1/2}}+(2+4\lambda^{1/2})
e^{-2\lambda^{1/2}}+\dots}=\\
=
\lambda+4\lambda^{3/2}e^{-\lambda^{1/2}}
+8\lambda^{2}e^{-2\lambda^{1/2}}+\dots.
\endaligned
$$
Hence the inverse function $\lambda(D)$ has the asymptotics
$$
\lambda(D)=D(1-4\sqrt{D}e^{-\sqrt{D}}+\dots)=
d^2(1-2de^{-d}+\dots)^2,
$$
whose first two terms give~\eqref{lam*}.

Finally,
$$
\aligned
\mathbb{V}(D)=
\frac1{2\sqrt{\lambda(D)}}\tanh\frac{\lambda(D)^{1/2}}2\cdot(\lambda(D)+D)=\\
\frac12\frac1{d(1-2de^{-d}+\dots)}\tanh\frac{d(1-2de^{-d}+\dots)}2
\left(d^2(1-2de^{-d}+\dots)^2+d^2\right)=\\
d\cdot\tanh\frac{d(1-2de^{-d}+\dots)}2
\left(1-2de^{-d}+\dots\right)\left(1+2de^{-d}+\dots\right)=\\
d\cdot\tanh\frac{d(1-2de^{-d}+\dots)}2(1+\dots)=
d(1-2e^{-d}+\dots),
\endaligned
$$
which proves sharpness and completes the proof.
\end{proof}
\begin{remark}\label{R:3terms}
{\rm
Arguing  similarly to
Proposition~\ref{Prop6.as} one can write down the
three-term expansion of $\mathbb{V}(D)$ as $D\to \infty$:
\begin{equation}\label{Dir.max-as}
\mathbb V(D)=
D^{1/2}-2D^{1/2}e^{-D^{-1/2}}-2D^{1/2}(D-1)e^{-2D^{-1/2}}
+o(e^{-3D^{-1/2}+\varepsilon}).
 \end{equation}
 The third term is negative and hence
 inequality~\eqref{forallD} holds for all sufficiently large
 $D$. However, as we have shown, \eqref{forallD}
 holds for {\it all} $D\ge\pi^2$.
}
\end{remark}

\par
\subsubsection{On a second order inequality on the interval}
We want to apply the above developed theory to the following
one-dimensional interpolation inequality
\begin{equation}\label{8.simple}
\|u\|^2_{\infty}\le K\|u\|_{L_2(0,1)}^{3/2}\|u''\|_{L_2(0,1)}^{1/2},
\ \ u\in H^2(0,1)\cap H^1_0(0,1)
\end{equation}
Here, $Au(x):=u^{(4)}(x)$ with  boundary conditions
$u(0)=u(1)=u''(0)=u''(1)=0$, $B=Id$, $\theta=3/4$ and
$\Bbb A(\lambda)=u^{(4)}+\lambda u$ and, in order to find the key function
$G_\lambda(\xi,\xi)$, we need to solve the equation
\begin{equation*}\label{u4}
u''''(x)+\lambda u(x)=\delta(x-\xi).
\end{equation*}
Using the orthonormal system of eigenfunctions
$\{\sqrt{2}\sin \pi n x\}_{n=1}^\infty$ we obtain
\begin{equation}\label{Green_u4}
G_\lambda(x,\xi)=2\sum_{n=1}^\infty\frac{\sin \pi nx\sin \pi n\xi}
{\pi^4n^4+\lambda},
\end{equation}
and setting $\lambda=4a^4\pi^4$ to simplify the formulas below we have
\begin{equation*}\label{Greenxixi}
G_\lambda(\xi,\xi)=\frac2{\pi^4}\sum_{n=1}^\infty\frac{\sin^2 \pi n\xi}
{n^4+4a^4}.
\end{equation*}
Next, the identity $\sin^2\pi n\xi=1/2-
({e^{2\pi i n \xi}+e^{-2\pi i n \xi}})/4$
and the Poisson summation formula
$$
\sum_{n=-\infty}^\infty g(n+y)=
\sqrt{2\pi}\sum_{n=-\infty}^\infty e^{2\pi i ny}
\widehat{g}(2\pi n)
$$
give
\begin{equation*}\label{Greenxixiexp}
\aligned
\sum_{n=1}^\infty\frac{\sin^2 \pi n\xi}
{n^4+4a^4}=%\frac2{\pi^4}
\frac14\left(\sum_{n=-\infty}^\infty\frac1{n^4+4a^4}-
\sum_{n=-\infty}^\infty\frac{e^{2\pi i n
\xi}}{n^4+4a^4}\right)=\\=
\frac14\frac1{(\sqrt{2}a)^3}
\left(\sum_{n=-\infty}^\infty f(\sqrt{2}\,a 2\pi n)-
\sum_{n=-\infty}^\infty f(\sqrt{2}\,a 2\pi
(n+\xi))\right)=\\=
\frac14\frac1{(\sqrt{2}a)^3}\frac{\pi\sqrt{2}}2
\left(\sum_{n=-\infty}^\infty f_0(a 2\pi n)-
\sum_{n=-\infty}^\infty f_0(a 2\pi
(n+\xi))\right),
\endaligned
\end{equation*}
where
$$
f(y):=\int_{-\infty}^\infty\frac{e^{-ixy}\,dx}{x^4+1}=
\frac{\pi\sqrt{2}}2f_0\left(\tfrac{|y|}{\sqrt{2}}\right),
\qquad
f_0(x)=e^{-|x|}(\cos|x|+\sin|x|).
$$
Combining the above, we obtain for the key function in
Theorem~\ref{Th4.main1}
\begin{equation}\label{sum_sum}
\lambda^{3/4}G_\lambda(\xi,\xi)=\frac{\sqrt{2}}4
\left(\sum_{n=-\infty}^\infty f_0(a 2\pi n)-
\sum_{n=-\infty}^\infty f_0(a 2\pi
(n+\xi))\right)=:\frac{\sqrt{2}}4S(a,\xi).
\end{equation}
We now observe that the function $f_0(x)$ (which  up to
a constant factor is the fundamental solution of the operator
\eqref{u4} on the whole line) has positive local maximums at
$x=2\pi n$ (the one at $0$ being the global) and negative
local minimums at $x=\pi+2\pi n$
(the ones at $\pm\pi$ being the global).

For $a$ large enough and $\xi\le 1/2$ the leading terms are
$$
S(a,\xi)=f_0(0)-f_0(2\pi a\xi)+O(e^{-2\pi a}),
$$
and since $f_0(x)$ has a negative global minimum at $\pi$,
we see a ``horn'' of height $e^{-\pi}$ at $\xi=a^{-1}/2$,
which gives the maximum value of $S(a,\xi)$ for large $a$;
see Fig.~\ref{pic_2d_n_9_10}.

As for the global maximum of $S(a,\xi)$, we see that if
$a=1$ and $\xi=1/2$, then the first and the second sums in~\eqref{sum_sum}
count one by one  all the maximums  and all the negative minimums of
 $f_0$, respectively. Therefore
$$
S(a,\xi)\le S(1,1/2)=\coth\frac\pi 2.
$$
Therefore we have proved the following result.
\begin{theorem}\label{Th:4th_order}
The sharp constant in inequality~\eqref{8.simple}
is
$$
K=\frac{\sqrt{2}}{\sqrt[4]{27}}\cdot \coth\frac\pi 2=
\frac{\sqrt{2}}{\sqrt[4]{27}}\cdot 1.09033\dots\,.
$$
The unique extremal function is given by~\eqref{Green_u4} with
$\lambda=4\pi^4$ and $\xi=\frac12$.
\end{theorem}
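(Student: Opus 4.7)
The plan is to apply Theorem~\ref{Th4.main1} in the abstract setting $Au=u^{(4)}$ (with the stated boundary conditions), $B=\mathrm{Id}$, $\theta=3/4$. Since $\theta^\theta(1-\theta)^{1-\theta}=(3/4)^{3/4}(1/4)^{1/4}=\sqrt[4]{27}/4$, the sharp constant in~\eqref{8.simple} is
$$
K=\sup_{\xi\in(0,1)}K(\xi)=\frac{4}{\sqrt[4]{27}}\,\sup_{\lambda>0,\,\xi\in(0,1)}\lambda^{3/4}G_\lambda(\xi,\xi).
$$
Combined with the identity $\lambda^{3/4}G_\lambda(\xi,\xi)=\tfrac{\sqrt2}{4}S(a,\xi)$ established in~\eqref{sum_sum} via the Fourier series~\eqref{Green_u4} and Poisson summation, the entire proof is reduced to the single scalar assertion $\sup_{a>0,\,\xi\in(0,1)}S(a,\xi)=S(1,1/2)=\coth(\pi/2)$.

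First I would verify the value at the candidate point $(a,\xi)=(1,1/2)$ by direct computation of geometric series. Since $\cos 2\pi k=1$ and $\sin 2\pi k=0$, the first sum collapses to $f_0(0)+2\sum_{k\ge 1}e^{-2\pi k}=\coth\pi$; and since $\cos(2k+1)\pi=-1$ and $\sin(2k+1)\pi=0$, the second sum collapses to $-2\sum_{k\ge 0}e^{-(2k+1)\pi}=-2/(e^\pi-e^{-\pi})$. Adding these and rewriting in terms of $e^{\pm\pi/2}$ yields $S(1,1/2)=(e^\pi+e^{-\pi}+2)/(e^\pi-e^{-\pi})=\coth(\pi/2)$, in agreement with the announced constant.

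The main obstacle is proving that $(1,1/2)$ is the \emph{global} maximizer. The structural reason is that $f_0(x)=\sqrt 2\,e^{-|x|}\sin(|x|+\pi/4)$ has positive local maxima precisely at $x=2\pi n$ and negative local minima precisely at $x=(2n+1)\pi$, with $f_0'$ vanishing at every such point. Consequently, at $(a,\xi)=(1,1/2)$ the lattice of the first sum in~\eqref{sum_sum} hits all positive maxima of $f_0$ while the lattice of the subtracted second sum hits all negative minima; this simultaneously forces $\partial_aS=\partial_\xi S=0$ (so $(1,1/2)$ is at least a critical point) and maximises each individual term in absolute value. To turn this into a global bound I would split the parameter region into a far regime $a\ge a_0$, $\xi\in[\delta,1-\delta]$, where the asymptotic $S(a,\xi)=f_0(0)-f_0(2\pi a\xi)+O(e^{-2\pi a})$ quoted just before the theorem, combined with the monotonic behaviour of $|f_0|$ between successive extrema, closes the inequality by explicit one-variable estimates; and a complementary compact regime where estimates on finitely many terms together with the rapid decay of $f_0$ permit a controlled verification with numerical margin, along the lines of the proof of Theorem~\ref{Th.tor.1.1}. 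The symmetry $\xi\mapsto 1-\xi$ and the $1/a$-periodicity of the second sum in $\xi$ shrink this compact region further.

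Once $\sup S(a,\xi)$ is established as a \emph{maximum} attained at the finite point $\lambda_*=4\pi^4$ (corresponding to $a=1$), the final assertion of Theorem~\ref{Th4.main1} automatically supplies existence and uniqueness up to a scalar multiple of the extremal function in~\eqref{8.simple}, and it must equal $u_*(x)=\mathrm{const}\cdot G_{4\pi^4}(x,1/2)$, i.e.\ the series~\eqref{Green_u4} evaluated at $\lambda=4\pi^4$ and $\xi=1/2$.
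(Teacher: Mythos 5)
Your proposal takes essentially the same route as the paper: reduce to $\sup_{\lambda,\xi}\lambda^{3/4}G_\lambda(\xi,\xi)$ via Theorem~\ref{Th4.main1}, rewrite it via Poisson summation as $\frac{\sqrt 2}{4}\sup S(a,\xi)$ using~\eqref{sum_sum}, identify $(a,\xi)=(1,1/2)$ as the candidate because the two lattices of $S$ then land exactly on the positive maxima and the negative minima of $f_0$, and evaluate $S(1,1/2)=\coth(\pi/2)$ by geometric series. Your computation of the critical value is correct, and your observation that $(1,1/2)$ is automatically a critical point of $S$ because $f_0'$ vanishes at every multiple of $\pi$ is a nice touch.

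One caution, though. Your parenthetical claim that at $(1,1/2)$ the configuration ``maximises each individual term in absolute value'' is not literally correct, and in fact neither lattice sum in~\eqref{sum_sum} is individually maximized at $a=1$: with $F(a):=\sum_{n\in\mathbb Z}f_0(2\pi a n)$ one computes $F(1/\pi)=\sum_n f_0(2n)\approx 1.086$, whereas $F(1)=\coth\pi\approx 1.004$. Thus the inequality $S(a,\xi)\le S(1,1/2)$ has to be established for the two sums jointly, and the paper's ``the first and the second sums count one by one all the maximums and all the negative minimums, therefore $S(a,\xi)\le S(1,1/2)$'' is a heuristic supported by Figure~\ref{pic_2d_n_9_10} rather than a self-contained argument. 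Your plan of splitting into a far field (handled by the asymptotic $S(a,\xi)=f_0(0)-f_0(2\pi a\xi)+O(e^{-2\pi a})$, whose ``horn'' has height $1+e^{-\pi}<\coth(\pi/2)$) plus a compact region verified with explicit numerical margin, as in the proof of Theorem~\ref{Th.tor.1.1}, is a sound way to close this gap; but as written it remains a plan, and in fairness so does the paper's argument for global maximality.
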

\begin{remark}\label{R:0and00}
{\rm
We point out that the sharp constant
in~\eqref{8.simple} for $u\in H^2_0(0,1)$ is the same as
that on the whole line, namely,
$\frac{\sqrt{2}}{\sqrt[4]{27}}$.
}
\end{remark}
\begin{figure}[ht]
\centering
\begin{tabular}{cc}
\includegraphics*[angle=0,width=2.5in]{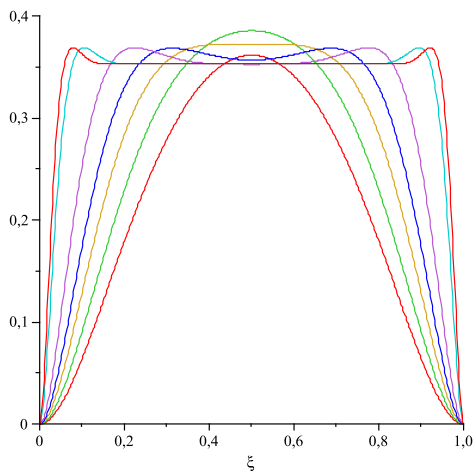}%{green.eps}
\end{tabular}
\caption{Plots of $\xi\to S(a,\xi)$ for $a=2.5/\pi$ (red),
$a=1$ (green), $a=4/\pi$ (brown), $a=5/\pi$ (blue), $a=(7,15,20)/\pi$}
\label{pic_2d_n_9_10}
\end{figure}

\subsubsection{A second-order inequality in 3D}
In conclusion we consider a second-order  inequality in
a three dimensional domain $\Omega\subseteq\mathbb{R}^3$
for which the passage from $H^2_0(\Omega)$ to
a wider space $H^2(\Omega)\cap H^1_0(\Omega)$
does not increase the constant in the corresponding
interpolation inequality. This inequality was obtained
in~\cite{Xie} by a somewhat problem specific method, nevertheless
we present its proof in the
framework of our general Theorem~\ref{Th4.main1}.
\begin{theorem}\label{Th:ketai}
Let $\Omega\subseteq\mathbb{R}^3$ be an arbitrary
domain. Let $\dot{H}^1_0(\Omega)$ be the completion
of $C^\infty_0(\Omega)$ in the norm
$\|\nabla\cdot\|_{L_2(\Omega)}$. Then for
$u\in \dot{H}^1_0(\Omega)\cap\{u:\ \Delta u\in L_2(\Omega)\}$
the following inequality holds
\begin{equation}\label{Xie}
\|u\|_\infty^2\le\frac1{2\pi}\|\nabla u\|\|\Delta u\|,
\end{equation}
where the constant is sharp and no extremal functions
exist,
unless $\Omega=\mathbb{R}^3$ and
$$
u(x)=\mathcal{F}^{-1}(1/(|\xi|^2+|\xi|^4)).
$$
\end{theorem}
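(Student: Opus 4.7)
The plan is to apply Theorem~\ref{Th4.main1} with $A=\Delta^2$ (Navier boundary conditions $u=\Delta u=0$ on $\partial\Omega$), $B=-\Delta$ (Dirichlet), and parameters $n=3$, $m=2$, $l=1$, $\theta=1/2$. The key observation is the commuting factorization $\mathbb{A}(\lambda)=\Delta^2-\lambda\Delta=-\Delta\circ(-\Delta+\lambda I)$. On $\mathbb{R}^3$ a direct Fourier computation gives
\[
G_\lambda^{\mathbb{R}^3}(\xi,\xi)=\frac1{(2\pi)^3}\int_{\mathbb{R}^3}\frac{d\eta}{|\eta|^4+\lambda|\eta|^2}=\frac1{4\pi\sqrt{\lambda}},
\]
so that $\lambda^{1/2}G_\lambda^{\mathbb{R}^3}(\xi,\xi)\equiv 1/(4\pi)$, and Theorem~\ref{Th4.main1} delivers $K=(\theta^\theta(1-\theta)^{1-\theta})^{-1}\cdot 1/(4\pi)=1/(2\pi)$; because this supremum is attained at every $\lambda>0$, the theorem also produces the extremal $u=G_1^{\mathbb{R}^3}(\cdot,0)=\mathrm{const}\cdot\mathcal{F}^{-1}(1/(|\eta|^2+|\eta|^4))$.

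For a general $\Omega\subseteq\mathbb{R}^3$, the factorization of $\mathbb{A}(\lambda)$ combined with the Navier condition (which forces both $u$ and $v:=(-\Delta+\lambda)u$ to vanish on $\partial\Omega$) yields
\[
G_\lambda^\Omega(x,\xi)=\int_\Omega G_D^\Omega(x,z)\,G_{\lambda,D}^\Omega(z,\xi)\,dz,
\]
where $G_D^\Omega$ and $G_{\lambda,D}^\Omega$ are the Dirichlet Green's functions on $\Omega$ of $-\Delta$ and $-\Delta+\lambda$ respectively. The classical domain monotonicity of Dirichlet Green's functions bounds each factor above by its free-space version $(4\pi|x-y|)^{-1}$ and $e^{-\sqrt\lambda|x-y|}(4\pi|x-y|)^{-1}$, so that
\[
G_\lambda^\Omega(\xi,\xi)\le\int_{\mathbb{R}^3}\frac{e^{-\sqrt\lambda|z|}}{(4\pi)^2|z|^2}\,dz=\frac1{4\pi\sqrt{\lambda}}.
\]
Hence $\sup_{\lambda>0}\lambda^{1/2}G_\lambda^\Omega(\xi,\xi)\le 1/(4\pi)$ and Theorem~\ref{Th4.main1} gives $K(\xi)\le 1/(2\pi)$ for every $\xi\in\Omega$.

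For sharpness on a proper subdomain I would fix an interior point $\xi_0\in\Omega$, a ball $B_r(\xi_0)\subset\Omega$ and a cutoff $\chi\in C_0^\infty(B_r(\xi_0))$ equal to $1$ near $\xi_0$, and test the inequality on the truncated $\mathbb{R}^3$ extremals $v_\lambda(x):=\chi(x)G_\lambda^{\mathbb{R}^3}(x,\xi_0)\in C_0^\infty(\Omega)$. The exponential localization $e^{-\sqrt\lambda|x-\xi_0|}$ makes the cutoff error super-polynomially small, so $v_\lambda(\xi_0)^2/(\|\nabla v_\lambda\|\,\|\Delta v_\lambda\|)\to 1/(2\pi)$ as $\lambda\to\infty$. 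Conversely, by Theorem~\ref{Th4.main1} any extremal on $\Omega\subsetneq\mathbb{R}^3$ would have to be proportional to $G_{\lambda_*}^\Omega(\cdot,\xi)$ with $\lambda_*^{1/2}G_{\lambda_*}^\Omega(\xi,\xi)=1/(4\pi)$, which is forbidden by strict domain monotonicity. Theorem~\ref{Th4.main1} applies on the form domain $H^1_0(\Omega)\cap H^2(\Omega)$ of $A$, and the inequality passes to the whole $\dot H^1_0(\Omega)\cap\{\Delta u\in L_2(\Omega)\}$ by density in the $\|\nabla\cdot\|_{L_2}+\|\Delta\cdot\|_{L_2}$ topology. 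The main obstacle is the sharpness step, where one must carefully track that the cutoff produces only super-polynomially small perturbations to $\|\nabla v_\lambda\|$ and $\|\Delta v_\lambda\|$, so that the limit ratio is exactly $1/(2\pi)$ and not a smaller number.
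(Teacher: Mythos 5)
Your overall strategy matches the paper's proof of Theorem~\ref{Th:ketai} exactly: apply Theorem~\ref{Th4.main1} with the Navier boundary conditions, exploit the factorization $\Delta^2-\lambda\Delta=-\Delta\circ(-\Delta+\lambda)$ to write $G_\lambda^\Omega$ as a convolution of the two Dirichlet Green's functions, dominate each factor by its free-space fundamental solution via the maximum principle, and conclude $G_\lambda^\Omega(\xi,\xi)\le 1/(4\pi\sqrt{\lambda})$, hence $K\le 1/(2\pi)$; sharpness is then obtained by localizing the $\mathbb R^3$ near-extremals. The paper phrases the domain comparison as an application of the maximum principle rather than calling it ``domain monotonicity,'' but this is the same statement.

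There is, however, one genuinely wrong claim in your sharpness step. You assert that $G_\lambda^{\mathbb R^3}(x,\xi_0)$ has ``exponential localization $e^{-\sqrt\lambda|x-\xi_0|}$'' so the cutoff errors are ``super-polynomially small.'' This is false: on $\mathbb R^3$ the fundamental solution of $\Delta^2-\lambda\Delta$ is
\[
G_\lambda^{\mathbb R^3}(x,\xi_0)=\frac{1-e^{-\sqrt\lambda\,|x-\xi_0|}}{4\pi\lambda\,|x-\xi_0|},
\]
which for fixed $|x-\xi_0|>0$ and large $\lambda$ is $\sim (4\pi\lambda\,|x-\xi_0|)^{-1}$, i.e.\ decays only algebraically in $\lambda$. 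Only $\Delta G_\lambda^{\mathbb R^3}=-e^{-\sqrt\lambda|x-\xi_0|}/(4\pi|x-\xi_0|)$ enjoys exponential localization. The cutoff argument nevertheless goes through, because what matters is the size of the truncation errors relative to the leading quantities: $\|\nabla G_\lambda\|^2\sim \lambda^{-3/2}$, $\|\Delta G_\lambda\|^2\sim \lambda^{-1/2}$, while the contributions of $G_\lambda$ and $\nabla G_\lambda$ supported in the fixed annulus where $\nabla\chi\neq 0$ are $O(\lambda^{-2})$. Thus $\|\nabla(\chi G_\lambda)\|\,\|\Delta(\chi G_\lambda)\|=\|\nabla G_\lambda\|\,\|\Delta G_\lambda\|\bigl(1+O(\lambda^{-1/2})\bigr)$, and the ratio still tends to $1/(2\pi)$; but the rate is polynomial, not super-polynomial, and your justification of the key estimate needs to be repaired accordingly. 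Your further remark that the extremal cannot exist on a proper subdomain by ``strict domain monotonicity'' is correct provided you first observe that $\lim_{\lambda\to\infty}\lambda^{1/2}G_\lambda^\Omega(\xi,\xi)=1/(4\pi)$ (which is supplied by the very same localization estimate), so the supremum is attained only in the limit. Finally, your reduction from $H^2(\Omega)\cap H^1_0(\Omega)$ to the full space $\dot H^1_0(\Omega)\cap\{\Delta u\in L_2\}$ by density is delicate for non-smooth domains; the paper sidesteps this by first working on bounded smooth $\Omega$ (where the two spaces coincide by elliptic regularity) and then exhausting a general domain by smooth bounded ones.
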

\begin{proof}
We first assume   that $\Omega$ is a bounded domain with
smooth boundary. Then by the elliptic regularity we have
$$
u\in \dot{H}^1_0(\Omega)\cap\{u:\ \Delta u\in
L_2(\Omega)\}=
H^2(\Omega)\cap H^1_0(\Omega)
$$

In accordance with Theorem~\ref{Th4.main1} we have to
consider the Green's function $G_\lambda(x,\xi)$ of the
following 4th order elliptic equation
\begin{equation}\label{4th_ellipt}
\aligned
&(\Delta^2-\lambda\Delta)G_\lambda(x,\xi)=
-\Delta(-\Delta+\lambda)G_\lambda(x,\xi)=\delta(x-\xi),\\
&G_\lambda(x,\xi)=\Delta G_\lambda(x,\xi)=0, \
\xi\in\Omega,\ x\in\partial\Omega.
\endaligned
\end{equation}

We denote by $g_D(x,\xi)$ the Green's function of the
Dirichlet Laplacian:
$$
-\Delta g_D(x,\xi)=\delta(x-\xi),\qquad
g_D(x,\xi)=0, \
\xi\in\Omega,\ x\in\partial\Omega,
$$
and by
$g_H(x,\xi)$ the Green's function of  the
Helmholtz equation:
$$
(-\Delta+\lambda) g_H(x,\xi)=\delta(x-\xi),\qquad
g_H(x,\xi)=0, \
\xi\in\Omega,\ x\in\partial\Omega.
$$
By the maximum principle we have
\begin{equation}\label{max_prin}
0<g_D(x,\xi)<\frac1{4\pi|x-\xi|},
\qquad0<g_H(x,\xi)<\frac{e^{-\sqrt{\lambda}|x-\xi|}}{4\pi|x-\xi|},
\end{equation}
where the functions on the right-hand sides are the
corresponding fundamental solutions in $\mathbb{R}^3$.
Therefore
$$
\aligned
0&<G_\lambda(x,\xi)=\int_{\Omega}
g_H(x,y)g_D(y,\xi)dy<\int_{\Omega}
\frac{e^{-\sqrt{\lambda}|x-y|}}{4\pi|x-y|}\,\frac1{4\pi|y-\xi|}dy<\\&<
\int_{\mathbb{R}^3}
\frac{e^{-\sqrt{\lambda}|x-y|}}{4\pi|x-y|}\,\frac1{4\pi|y-\xi|}dy=
\frac1{(2\pi)^3}\int_{\mathbb{R}^3}\frac{e^{i(\xi-x)\cdot z}dz}
{|z|^2(|z|^2+\lambda)}\,,
\endaligned
$$
the function on the right-hand side being the fundamental
solution of~\eqref{4th_ellipt} in $\mathbb{R}^3$, and, consequently,
$$
G_\lambda(\xi,\xi)<
\frac1{(2\pi)^3}\int_{\mathbb{R}^3}\frac{dz}
{|z|^2(|z|^2+\lambda)}=\frac1{4\pi\sqrt{\lambda}}\,.
$$
Now \eqref{Xie} follows from Theorem~\ref{Th4.main1}.
The constant $1/(2\pi)$ is clearly sharp, since we have the inequality
with the same constant for $u\in H^2_0(\mathbb{R}^3)$, see~\eqref{cR^n},
and (using extension by zero) for
$u\in H^2_0(\Omega)\subset
\dot{H}^1_0(\Omega)\cap\{u:\ \Delta u\in L_2(\Omega)\}$ .

Exhausting an arbitrary $\Omega\subset\mathbb{R}^3$ with
bounded domains $\Omega_n$ one can show that \eqref{Xie}
holds in the general case (see \cite{Xie} for the details).
\end{proof}

\begin{remark}\label{R:R30and00}
{\rm Unlike the previous example, the fundamental solution
of~\eqref{4th_ellipt} in $\mathbb{R}^3$ is positive. }
\end{remark}

\noindent{\bf Acknowledgments.}
The authors would like to thank A.A.Laptev and S.I.Pokhozhaev for  many helpful discussions.

This work was supported by the Russian Ministry of Education
and Science (contract no. 8502). The work of
A.A.I. was supported in
part by the Russian Foundation for Fundamental Research,
grants~no.~12-01-00203 and no.~11-01-00339.

\bibliographystyle{amsplain}

\enddocument